\newtheorem{theoremmy}{Theorem}
\newtheorem{lemmamy}[theoremmy]{Lemma}
\newtheorem{proptmy}[theoremmy]{Property}
\newtheorem{exper}[theoremmy]{Experiment}
\numberwithin{equation}{section}
\numberwithin{theoremmy}{section}
\newcommand{\bsmallmatrix}[1]{\begin{bmatrix}\begin{smallmatrix}#1\end{smallmatrix}\end{bmatrix}}
\begin{document}

\title{On choices of formulations of computing
the generalized singular value decomposition of a large matrix pair
\thanks{Supported by
the National Natural Science Foundation of China (No.11771249).}}


\titlerunning{Choices of formulations of computing GSVD}

\author{Jinzhi Huang \and  Zhongxiao Jia}


\institute{Jinzhi Huang \at
              Department of Mathematical Sciences, Tsinghua University, 100084 Beijing, China \\
              \email{huangjz15@mails.tsinghua.edu.cn}           
           \and
           Zhongxiao Jia \at
              Corresponding author.
              Department of Mathematical Sciences, Tsinghua University, 100084 Beijing, China \\
              \email{jiazx@tsinghua.edu.cn}
}


\maketitle

\begin{abstract}
For the computation of the generalized singular value
decomposition (GSVD) of a large matrix pair
$(A,B)$ of full column rank, the GSVD is commonly formulated
as two mathematically equivalent generalized eigenvalue problems,
so that a generalized eigensolver can be applied to one of them and the
desired GSVD components are then recovered from the
computed generalized eigenpairs.
Our concern in this paper is, in finite precision arithmetic,
which generalized eigenvalue formulation is numerically
preferable to compute the desired GSVD components more accurately.
We make a detailed perturbation analysis on the two formulations
and show how to make a suitable choice between them.
Numerical experiments illustrate the results obtained.

\keywords{Generalized singular value decomposition \and
generalized singular value \and generalized singular vector
\and generalized eigenpair \and eigensolver \and perturbation analysis \and condition number}

\subclass{ 65F15 \and 65F35 \and 15A12 \and 15A18 \and 15A42}

\end{abstract}

\section{Introduction}\label{section:1}

The generalized singular value decomposition (GSVD) of a matrix pair
$(A,B)$ was first introduced by van Loan \cite{van1976generalizing}
and then developed by Paige and Saunders \cite{paige1981towards}.
It has become a standard decomposition and an important
computational tool \cite{golub2012matrix}, and has been extensively
used in a wide range of contexts, e.g., solutions of discrete linear
ill-posed problems \cite{hansen1998rank},
weighted or generalized least squares problems \cite{bjorck1996numerical},
information retrieval \cite{howland2003structure},
linear discriminant analysis \cite{park2005relationship},
and many others \cite{betcke2008generalized,chu1987singular,golub2012matrix,kaagstrom1984generalized,vanhuffel}.

Let $A\in\mathbb{R}^{m\times n}$ ($m\geq n$)
and $B\in\mathbb{R}^{p\times n}$ ($p\geq n$)
be large and possibly sparse matrices of full column rank,
i.e., ${\rm rank}(A)={\rm rank}(B)=n$.
The GSVD of $(A,B)$ is
\begin{equation}\label{GSVD}
  \left\{\begin{aligned}
    A&=UCX^{-1},\\
    B&=VSX^{-1},
  \end{aligned}\right.
  \quad\mbox{with} \quad
  \left\{\begin{aligned}
    C&={\rm diag}\{\alpha_1,\dots,\alpha_n\},\\
    S&={\rm diag}\{\beta_1,\dots,\beta_n\},
  \end{aligned}\right.
\end{equation}
where $X=[x_1,\dots,x_n]$ is nonsingular,
$U=[u_1,\dots,u_n]$ and $V=[v_1,\dots,v_n]$ are orthonormal,
and the positive numbers $\alpha_i$ and $\beta_i$ satisfy
$\alpha_i^2+\beta_i^2=1$, $i=1,\dots,n $.
We call such $(\alpha_i,\beta_i,u_i,v_i,x_i)$ a GSVD component
of $(A,B)$ with the generalized singular value
$\sigma_i=\frac{\alpha_i}{\beta_i}$,
the left generalized singular vectors $u_i$ and $v_i$,
and the right generalized singular vector $x_i$, $i=1,\dots,n$.
Denote the generalized singular value matrix of $(A,B)$ by
\begin{equation}\label{Sigma}
  \Sigma=CS^{-1}={\rm diag}\{\sigma_1,\dots,\sigma_n\}.
\end{equation}
Throughout this paper, we also refer to a scalar pair
$(\alpha_i,\beta_i)$ as a generalized singular value of $(A,B)$.
Particularly, we will denote by $\sigma_{\max}(A,B)$ and
$\sigma_{\min}(A,B)$ the largest and smallest generalized
singular values of $(A,B)$, respectively.
Obviously, the generalized singular values of the pair $(B,A)$ are
$\frac{1}{\sigma_i},\ i=1,2,\ldots,n$,
the reciprocals of those of $(A,B)$, and their
generalized singular vectors are the same as those of $(A,B)$.

For a prescribed target $\tau$, assume that the generalized
singular values of $(A,B)$ are labeled by
\begin{equation}\label{lablegsvl}
  |\sigma_1-\tau|\leq|\sigma_2-\tau|\leq\dots\leq |\sigma_{\ell}-\tau|
  < |\sigma_{\ell+1}-\tau|\leq\cdots\leq|\sigma_n-\tau|.
\end{equation}
Specifically, if we are interested in the $\ell$ smallest generalized
singular values of $(A,B)$ and/or the associated left and right
generalized singular vectors, we assume $\tau=0$ in \eqref{lablegsvl},
so that the generalized singular values are labeled in increasing order;
if we are interested in the $\ell$ largest generalized singular
values of $(A,B)$ and/or the corresponding generalized singular
vectors, we assume $\tau=+\infty$ in \eqref{lablegsvl}, so that
the generalized singular values are labeled in decreasing order.
More generally, once $\tau$ is bigger than the
largest generalized singular value, the $\ell$ generalized singular values
closest to $\tau$ are the largest ones of $(A,B)$.
In these two cases, the $\ell$ GSVD components $(\alpha,\beta,u,v,x)$
are called the extreme (smallest or largest) GSVD components of $(A,B)$.
Otherwise they are called $\ell$ interior GSVD components of
$(A,B)$ if the
given $\tau$ is inside the spectrum of the generalized
singular values of $(A,B)$.
We will abbreviate any one of the desired GSVD components as
$(\sigma,u,v,x)$ or $(\alpha,\beta,u,v,x)$ with the subscripts dropped.

For a large and possibly sparse matrix pair $(A,B)$, one kind of
approach to compute the desired GSVD components works on the pair directly.
Zha \cite{zha1996} proposes a joint bidiagonalization method to
compute the extreme generalized singular values $\sigma$ and
the associated generalized singular vectors $u,v,x$, which is a
generalization of Lanczos bidiagonalization type methods
\cite{jia2003implicitly,jia2010} for computing a partial ordinary
SVD of $A$ when $B=I$.
A main bottleneck of this method is that a large-scale least
squares problem with the coefficient matrix
$\begin{bmatrix}\begin{smallmatrix}A\\B\end{smallmatrix}\end{bmatrix}$
must be solved at each step of the joint bidiagonalization.
Jia and Yang \cite{jiayang2018} has made a further analysis
on this method and its variant, and provided more theoretical
supports for its rationale.

For the computation of GSVD,
a natural approach is to apply a generalized eigensolver to
the mathematically equivalent generalized eigenvalue problem of the cross product
matrix pair $(A^TA,B^TB)$ to compute the corresponding eigenpairs
$(\sigma^2,x)$ and then recover the desired GSVD components from the
computed eigenpairs.
However, because of the squaring of the generalized singular values
of $(A,B)$, for $\sigma$ small, the eigenvalues $\sigma^2$ of
$(A^TA,B^TB)$ are much smaller.
As a consequence, the smallest generalized singular values may be
recovered much less accurately and even may have no accuracy \cite{jia2006}.
Therefore, we will not consider such a formulation in this paper.

Another kind of commonly used approach formulates the GSVD
as a generalized eigenvalue problem \cite{hochstenbach2009jacobi},
where the Jacobi-Davidson method \cite{hochstenbach2004}
for the ordinary SVD problem has been adapted to a
mathematically equivalent formulation of the GSVD so that
a suitable generalized eigensolver
\cite{parlett1998symmetric,saad2011numerical,stewart2001matrix} can be
used. The approach then recovers the desired GSVD components.
Concretely, the two formulations proposed
in \cite{hochstenbach2009jacobi} transform the GSVD into
the generalized eigenvalue problem
of the augmented definite matrix pair
\begin{equation}\label{widehatAB}
  (\widehat{A},\widehat{B}):=\left(
  \begin{bmatrix}&A\\A^T&\end{bmatrix},
  \begin{bmatrix}I&\\&B^TB\end{bmatrix}\right),
\end{equation}
or the augmented definite matrix pair
\begin{equation}\label{widetildeBA}
  (\widetilde{B},\widetilde{A}):=\left(
  \begin{bmatrix}&B\\B^T&\end{bmatrix},
  \begin{bmatrix}I&\\&A^TA\end{bmatrix}\right).
\end{equation}
We will give detailed relationships between the GSVD of $(A,B)$ and
the generalized eigenpairs of $(\widehat{A},\widehat{B})$ and
$(\widetilde{B},\widetilde{A})$ in the next section.
One then applies a generalized eigensolver to either of them,
computes the corresponding generalized eigenpairs, and recovers the desired
GSVD components from those computed generalized eigenpairs.

As will be clear next section, the nonzero eigenvalues of
$(\widehat A,\widehat B)$ and $(\widetilde B,\widetilde A)$
are $\pm\sigma_i$ and $\pm \frac{1}{\sigma_i}$, $i=1,2,\ldots,n$,
respectively.
Therefore, the largest or interior generalized singular values of
$(A,B)$ become the largest or interior eigenvalues of
$(\widehat A,\widehat B)$, and the smallest or interior generalized
singular values are the largest and interior eigenvalues
of $(\widetilde B,\widetilde A)$.
In principle, we may use a number of projection methods, e.g.,
Lanczos type methods, to compute the extreme GSVD components via
solving the generalized eigenvalue problem of
$(\widehat A,\widehat B)$ or $(\widetilde B,\widetilde A)$.
For a unified account of projection algorithms,
we refer to \cite{baiedit2000}.
For the computation of interior GSVD components of $(A,B)$,
we may employ the Jacobi-Davidson type method proposed in
\cite{hochstenbach2009jacobi}, referred as JDGSVD,
where at each step a linear system, i.e., the correction equation,
is solved iteratively and its approximate solution is
used to expand the current searching subspaces.
The JDGSVD method deals with the generalized eigenvalue
problem of \eqref{widehatAB}  or \eqref{widetildeBA},
computes some specific generalized eigenpairs,
and recovers the desired GSVD components from the converged
generalized eigenpairs.

As far as numerical computations are concerned, an important
question arises naturally: which of the {\em mathematically equivalent}
formulations \eqref{widehatAB} and \eqref{widetildeBA} is
{\em numerically preferable}, so that the desired GSVD components
can be computed more accurately?
In this paper, rather than propose or develop any
numerical algorithm
for computing the desired $\ell$ GSVD components,
we focus on this question carefully, give
a deterministic answer to it, and suggest a definitive choice.
We first make a sensitivity analysis on
the generalized eigenpairs of \eqref{widehatAB} and \eqref{widetildeBA}.
Based on the results to be obtained, we establish accuracy estimates
for the approximate generalized singular values and the left and
right generalized singular vectors that are recovered from the
approximate generalized eigenpairs obtained.
Then by comparing the accuracy of the approximate GSVD components
recovered from the approximate generalized eigenpairs of
\eqref{widehatAB} and \eqref{widetildeBA}, we
make a correct choice between these two formulations.

This paper is organized as follows.
In Section \ref{section:2} we make a sensitivity analysis on the
generalized eigenvalue problems of the structured matrix pairs
$(\widehat A,\widehat B)$ and $(\widetilde B,\widetilde A)$,
respectively, and give error bounds for the generalized singular
values $\sigma$ and the generalized eigenvectors of
$(\widehat A,\widehat B)$ and $(\widetilde B,\widetilde A)$.
In Section \ref{section:3} we carry out a sensitivity analysis on
the approximate generalized singular vectors that are recovered
from the approximate generalized eigenpairs of
$(\widehat A,\widehat B)$ and $(\widetilde B,\widetilde A)$.
Based on the results and analysis, we conclude that \eqref{widetildeBA}
is preferable to compute the GSVD more accurately
when $A$ is well conditioned and $B$ is ill conditioned, and
\eqref{widehatAB} is preferable when $A$ is ill conditioned and $B$
is well conditioned.
In Section \ref{section:4} we propose a few practical choice
strategies on \eqref{widehatAB} and \eqref{widetildeBA}.
In Section \ref{section:6} we report the numerical experiments.
We conclude the paper in Section~\ref{section:7}.

Throughout this paper, denote by $\|\cdot\|$ the 2-norm of
a vector or matrix and $\kappa(C)=\sigma_{\max}(C)/\sigma_{\min}(C)$
the condition number of a matrix $C$ with $\sigma_{\max}(C)$
and $\sigma_{\min}(C)$ being the largest and smallest singular values
of $C$, respectively, and by $C^T$ the transpose of $C$.
Denote by $I_k$ the identity matrix of order $k$, by $0_k$ and
$0_{k\times l}$ the zero matrices of order $k$ and $k\times l$, respectively.
The subscripts are omitted when there is no confusion. We also denote
by $\mathcal{R}(C)$ the column space or range of $C$.
For brevity of our analysis and results, without loss of generality,
we suppose that $\|A\|$ and $\|B\|$ are comparable in size
and, furthermore, $A$ and $B$ have already been scaled so
that their 2-norms are of $\mathcal{O}(1)$, that is,
$\|A\|\approx\|B\|\approx 1$ roughly, meaning
that $\sigma_{\min}^{-1}(A)=\|A^{\dagger}\|\approx\kappa(A)$ and
$\sigma_{\min}^{-1}(B)=\|B^{\dagger}\|\approx\kappa(B)$ roughly and
the conditioning of $A$ and $B$ is reflected by $\sigma_{\min}(A)$ and
$\sigma_{\min}(B)$, respectively.

\section{Perturbation analysis of generalized eigenvalue problems
and the accuracy of generalized singular values}\label{section:2}

The generalized eigendecompositions of the matrix
pairs $(\widehat A,\widehat B)$  and $(\widetilde B,\widetilde A)$
are closely related to the GSVD of $(A,B)$ in the following way,
which is straightforward to verify.

\begin{lemmamy}\label{lemma:1}
Let the GSVD of $(A,B)$ be defined by \eqref{GSVD} with the
generalized singular values defined by \eqref{Sigma}.
Let $U_{\perp}\in\mathbb{R}^{m\times (m-n)}$
and $V_{\perp}\in\mathbb{R}^{p\times (p-n)}$ be such that
$[U,U_{\perp}]$ and $[V,V_{\perp}]$ are orthogonal.
Then the matrix pairs $(\widehat A,\widehat B)$ and
$(\widetilde B,\widetilde A)$ defined by \eqref{widehatAB} and
\eqref{widetildeBA} have the generalized eigendecompositions
\begin{equation}\label{gen-eg}
  \widehat{A}Y=\widehat{B} Y \widehat{\Sigma}
  \quad\mbox{and}\quad
  \widetilde{B}Z=\widetilde{A}Z \widetilde{\Lambda},
\end{equation}
respectively, where
\begin{equation}\label{eigpair-AB}
  \widehat\Sigma=\begin{bmatrix}\begin{smallmatrix}
  \Sigma&&\\&-\Sigma&\\&&0\end{smallmatrix}\end{bmatrix},
  \quad   \quad
  Y=\begin{bmatrix}\frac{1}{\sqrt2}U &\frac{1}{\sqrt2}U &U_{\perp}\\
  \frac{1}{\sqrt2}W &-\frac{1}{\sqrt2}W &0 \end{bmatrix}
\end{equation}
with $W=XS^{-1}$, and
\begin{equation}\label{eigpair-BA}
  \widetilde\Lambda=\begin{bmatrix}\begin{smallmatrix}
  \Lambda&&\\&-\Lambda&\\&&0\end{smallmatrix}\end{bmatrix},
  \quad\quad
  Z=\begin{bmatrix}\frac{1}{\sqrt2}V&\frac{1}{\sqrt2}V&V_{\perp}\\
  \frac{1}{\sqrt2}W^{\prime}&-\frac{1}{\sqrt2}W^{\prime}&0\end{bmatrix}
\end{equation}
with $\Lambda=\Sigma^{-1}=SC^{-1}$ and $W^{\prime}=XC^{-1}$.
Moreover, the columns of the eigenvector matrices $Y$ and $Z$ are
$\widehat B$- and $\widetilde A$-orthonormal, respectively, i.e.,
\begin{equation}\label{semiorth}
  Y^T\widehat B Y=I_{m+n},\quad\quad Z^T\widetilde A Z=I_{p+n}.
\end{equation}
\end{lemmamy}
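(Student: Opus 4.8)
The statement is a set of identities tying the GSVD \eqref{GSVD} to the generalized eigendecompositions of the two augmented pairs, so the plan is a direct block verification carried out in two stages: first the eigenvalue relations in \eqref{gen-eg}, then the orthonormality relations in \eqref{semiorth}. The only inputs I would use are the defining GSVD relations $AX=UC$, $BX=VS$, $U^TU=V^TV=I_n$, $U^TU_{\perp}=0$ and $V^TV_{\perp}=0$. From these I would first record the elementary cross-product identities
\begin{equation*}
  A^TU=X^{-T}C,\qquad B^TBX=X^{-T}S^2,\qquad A^TA X=X^{-T}C^2,
\end{equation*}
obtained by substituting $A=UCX^{-1}$ and $B=VSX^{-1}$ and cancelling $U^TU=V^TV=I_n$, and also $A^TU_{\perp}=X^{-T}C\,U^TU_{\perp}=0$.

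Denote the three block-columns of $Y$ by $Y_1,Y_2,Y_3$, corresponding to the eigenvalue blocks $\Sigma$, $-\Sigma$ and $0$ of $\widehat\Sigma$. For a generic vector written in block form as $\big[\begin{smallmatrix}u\\w\end{smallmatrix}\big]$, block multiplication gives $\widehat A\big[\begin{smallmatrix}u\\w\end{smallmatrix}\big]=\big[\begin{smallmatrix}Aw\\A^Tu\end{smallmatrix}\big]$ and $\widehat B\big[\begin{smallmatrix}u\\w\end{smallmatrix}\big]=\big[\begin{smallmatrix}u\\B^TBw\end{smallmatrix}\big]$. For $Y_1$, with $u=U$ and $w=W=XS^{-1}$, I would check the two blocks of $\widehat AY_1=\widehat BY_1\Sigma$ separately: the top block reads $AW=UCS^{-1}=U\Sigma$, and the bottom block reads $A^TU=X^{-T}C=(X^{-T}S)\Sigma=(B^TBW)\Sigma$, where $B^TBW=X^{-T}S^2S^{-1}=X^{-T}S$ and $C=S\Sigma$. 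The block-column $Y_2$ differs only in the sign of $w$, which flips the eigenvalue to $-\Sigma$, and $Y_3=\big[\begin{smallmatrix}U_{\perp}\\0\end{smallmatrix}\big]$ gives $\widehat AY_3=\big[\begin{smallmatrix}0\\A^TU_{\perp}\end{smallmatrix}\big]=0$, matching the eigenvalue $0$. Assembling the three groups gives $\widehat AY=\widehat BY\widehat\Sigma$.

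For the orthonormality $Y^T\widehat BY=I_{m+n}$, I would form the Gram matrix block by block from the $\widehat B$-images above. The single cancellation that drives everything is
\begin{equation*}
  W^TX^{-T}S=S^{-1}X^TX^{-T}S=S^{-1}S=I_n,
\end{equation*}
using $X^TX^{-T}=I_n$. With it, each diagonal block equals $\tfrac12(U^TU+W^TX^{-T}S)=I_n$, the $(1,2)$ cross block $\tfrac12(U^TU-W^TX^{-T}S)$ vanishes, and the blocks involving $Y_3$ collapse to $U^TU_{\perp}=0$ and $U_{\perp}^TU_{\perp}=I_{m-n}$; the dimension count $2n+(m-n)=m+n$ fixes the size, so $Y^T\widehat BY=I_{m+n}$.

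Finally, $(\widetilde B,\widetilde A)$ is obtained from $(\widehat A,\widehat B)$ by interchanging $A$ with $B$ and, correspondingly, $C$ with $S$ and $U$ with $V$, which is exactly the previous argument applied to the GSVD of $(B,A)$ whose generalized singular values are the reciprocals $1/\sigma_i$ with the same right generalized singular vectors; this yields $\Lambda=\Sigma^{-1}$, $W'=XC^{-1}$, and both $\widetilde BZ=\widetilde AZ\widetilde\Lambda$ and $Z^T\widetilde AZ=I_{p+n}$. I do not expect a genuine obstacle here, since the whole lemma reduces to block arithmetic; the only points demanding care are keeping the bookkeeping of the mixed products straight—especially reading $X^{-T}$ as $(X^T)^{-1}$ and pairing $C$ with $A$ and $S$ with $B$—and spotting the single identity $W^TX^{-T}S=I_n$ that simultaneously normalizes the diagonal blocks and kills the cross block.
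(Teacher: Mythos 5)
Your proof is correct and takes essentially the same approach as the paper, which states the lemma ``is straightforward to verify'' and omits the details: your direct block verification of \eqref{gen-eg} via $AW=U\Sigma$, $A^TU=X^{-T}C=B^TBW\,\Sigma$, and $A^TU_{\perp}=0$, together with the Gram-matrix computation hinging on $W^TX^{-T}S=I_n$ and the swap $(A,B)\leftrightarrow(B,A)$ for the second pair, is exactly the intended verification.
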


Lemma \ref{lemma:1} illustrates that the GSVD component
$(\alpha,\beta,u,v,x)$ of $(A,B)$ corresponds to the
generalized eigenpair
\begin{equation}\label{eig-pair}
  (\sigma,y):=\left(\frac{\alpha}{\beta},\frac{1}{\sqrt2}
  \begin{bmatrix}u\\x/\beta\end{bmatrix}\right)
\end{equation}
of the augmented matrix pair $(\widehat A,\widehat B)$ with
the eigenvector $y$ satisfying $y^T\widehat A y=\sigma$ and
$y^T\widehat B y=1$ and the generalized eigenpair
\begin{equation}\label{eig-pair-2}
(\frac{1}{\sigma},z):=\left(\frac{\beta}{\alpha},\frac{1}{\sqrt2}
  \begin{bmatrix}v\\x/\alpha\end{bmatrix}\right)
\end{equation}
of the augmented matrix pair $(\widetilde B,\widetilde A)$ with the
eigenvector $z$ satisfying $z^T\widetilde B z=\frac{1}{\sigma}$ and
$z^T\widetilde A z=1$.
Therefore, the GSVD of $(A,B)$ is mathematically equivalent to the generalized eigendecompositions
\eqref{widehatAB} and \eqref{widetildeBA}.
In order to obtain some GSVD components  $(\alpha,\beta,u,v,x)$,
one can compute the corresponding
generalized eigenpairs $(\sigma,y)$ of $(\widehat A,\widehat B)$
or $(\frac{1}{\sigma},z)$ of
$(\widetilde B,\widetilde A)$ by applying a generalized
eigensolver to \eqref{widehatAB} or \eqref{widetildeBA},
and then recovers the desired GSVD components.

However, in numerical computations, we can
obtain only {\em approximate} eigenpairs of \eqref{widehatAB}
and \eqref{widetildeBA}, and thus recover only {\em approximate}
GSVD components of $(A,B)$.
As a result,
when {\em numerically backward stable} eigensolvers solve
the generalized eigenvalue
problems of \eqref{widehatAB} and \eqref{widetildeBA} with the
computed eigenpairs whose residuals have about the same size,
a natural and central concern is: which of the computed eigenpairs of
\eqref{widehatAB} and \eqref{widetildeBA} will yield more accurate
approximations to the desired GSVD components of $(A,B)$, that is,
which of \eqref{widehatAB} and \eqref{widetildeBA} is numerically
preferable to compute the GSVD components more accurately?

To this end, we need to carefully estimate the accuracy of the
computed eigenpairs and that of the recovered GSVD components.
Given a backward stable generalized eigensolver applied to
\eqref{widehatAB} and \eqref{widetildeBA}, let $(\widehat\sigma,
\widehat y)$ and $(\frac{1}{\widetilde{\sigma}},\widetilde z)$
be the computed approximations to $(\sigma,y)$ and
$(\frac{1}{\sigma},z)$, respectively.
Then $(\widehat \sigma,\widehat y)$ and
$(\frac{1}{\widetilde{\sigma}},\widetilde z)$
are the exact eigenpairs of some perturbed matrix pairs
\begin{equation}\label{widehatABp}
  (\widehat{\bm{A}},\widehat{\bm{B}})
 =(\widehat A+\widehat E,\widehat B+\widehat F)
  \quad\mbox{and}\quad
  (\widetilde{\bm{B}},\widetilde{\bm{A}})
 =(\widetilde B+\widetilde F,\widetilde A+\widetilde E),
\end{equation}
respectively, where the perturbations satisfy
\begin{equation}\label{perturbations1}
  \|\widehat E\|\leq\|\widehat A\|\epsilon, \quad
  \|\widehat F\|\leq\|\widehat B\|\epsilon
  \quad\mbox{and}\quad
  \|\widetilde F\|\leq\|\widetilde B\|\epsilon, \quad
  \|\widetilde E\|\leq\|\widetilde A\|\epsilon
\end{equation}
for $\epsilon$ small.
In applications, we typically have $\epsilon=\mathcal{O}(\epsilon_{\rm mach})$ or
$\epsilon=\mathcal{O}(\epsilon_{\rm mach}^{1/2})$ with
$\epsilon_{\rm mach}$ being the machine precision
\cite{baiedit2000,golub2012matrix,parlett1998symmetric,saad2011numerical,stewart2001matrix}.
Here in \eqref{widehatABp}, to distinguish from the exact
augmented matrices defined in \eqref{widehatAB} and \eqref{widetildeBA},
we have used the bold letters to denote the perturbed matrices.
Notice that the assumption
$\|A\|\approx\|B\|\approx1$ made in Section \ref{section:1} means
$\|\widehat A\|=\|A\|$, $\|\widetilde{A}\|=\max\{1,\|A\|^2\}$ and
$\|\widehat B\|=\max\{1,\|B\|^2\},\ \|\widetilde B\|=\|B\|$.
Therefore, the perturbations in \eqref{perturbations1} satisfy
\begin{equation}\label{perturbations2}
\max\left\{ \|\widehat E\|,\|\widehat F\|,
\|\widetilde E\|,\|\widetilde F\|\right\}\leq
\max\{\|A\|^2,\|B\|^2,1\}\epsilon.
\end{equation}

In what follows, we will analyze how accurate the computed
eigenpairs $(\widehat\sigma,\widehat y)$ and $(\frac{1}{\widetilde{\sigma}},
\widetilde z)$ are for a given small $\epsilon$.

\subsection{The accuracy of generalized singular values}

Stewart and Sun in the monograph \cite{stewart1990matrix}
use a chordal metric to
measure the distance between the approximate and exact eigenvalues
of a regular matrix pair.
Let $\widehat \sigma$ and $\sigma$ be the eigenvalues of
$(\widehat{\bm{A}},\widehat{\bm{B}})$ and $(\widehat A,\widehat B)$.
Then the chordal distance between them is
\begin{equation}\label{chordal}
 \chi(\widehat\sigma,\sigma)
  =\frac{|\widehat\sigma-\sigma|}
  {\sqrt{1+\widehat\sigma^2}\sqrt{1+\sigma^2}}.
\end{equation}

We present the following results.

\begin{theoremmy}\label{theorem:0o}
Let $(\sigma,y)$ and $(\frac{1}{\sigma},z)$ be simple eigenpairs of
$(\widehat A,\widehat B)$ and $(\widetilde B,\widetilde A)$, respectively,
and their approximations $(\widehat\sigma,\widehat y)$ and
$(\frac{1}{\widetilde\sigma},\widetilde z)$ be the exact eigenpairs
of the perturbed matrix pairs
$(\widehat{\bm{A}},\widehat{\bm{B}})=
(\widehat A+\widehat E,\widehat B+\widehat F)$ and
$(\widetilde{\bm{B}},\widetilde{\bm{A}})=
(\widetilde B+\widetilde F,\widetilde A+\widetilde E)$, respectively, with
the perturbations satisfying \eqref{perturbations1}.
Assume that the approximate eigenvectors $\widehat y$ and
$\widetilde z$ are decomposed in the unnormalized form of
$\widehat y=y+s$ and $\widetilde z=z+t$
with $y^T\widehat Bs=0$ and $z^T\widetilde At=0$.
Then the following error bounds hold:
\begin{eqnarray}
 \chi(\widehat\sigma,\sigma)&\leq&
  \frac{\|y\|^2(1+\delta_1)}{\sqrt{(y^T\widehat Ay)^2+(y^T\widehat By)^2}}
  \sqrt{\|\widehat E\|^2+\|\widehat F\|^2},  \label{acceigvalue1}\\
 \chi(\widetilde \sigma,\sigma)&\leq&
   \frac{\|z\|^2(1+\delta_2)}{\sqrt{(z^T\widetilde Az)^2+(z^T\widetilde Bz)^2}}
   \sqrt{\|\widetilde E\|^2+\|\widetilde F\|^2}, \label{acceigvalue2}
\end{eqnarray}
where $\delta_1=\frac{\|s\|}{\|y\|}$ and $\delta_2=\frac{\|t\|}{\|z\|}$.
\end{theoremmy}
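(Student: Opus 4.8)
The plan is to derive an exact scalar identity for the eigenvalue error $\widehat\sigma-\sigma$ by testing the perturbed eigenequation against the \emph{exact} eigenvector $y$, and then to convert it into a chordal bound through a single well-chosen Cauchy--Schwarz step. First I would write the exact relation $\widehat A y=\sigma\widehat B y$ together with the perturbed relation $(\widehat A+\widehat E)\widehat y=\widehat\sigma(\widehat B+\widehat F)\widehat y$, and left-multiply the latter by $y^T$. Since $\widehat A$ and $\widehat B$ are symmetric, the exact equation gives $y^T\widehat A=\sigma\,y^T\widehat B$, whence $y^T\widehat A\widehat y=\sigma\,y^T\widehat B\widehat y$; and the prescribed decomposition $\widehat y=y+s$ with $y^T\widehat B s=0$ gives $y^T\widehat B\widehat y=y^T\widehat B y$. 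Substituting both into the tested equation and rearranging yields the clean identity
\begin{equation*}
(\widehat\sigma-\sigma)\,y^T\widehat B y=y^T\widehat E\widehat y-\widehat\sigma\,y^T\widehat F\widehat y,
\end{equation*}
in which no linearization has been made. Note that $\widehat B=\mathrm{diag}\{I,B^TB\}$ is positive definite because $B$ has full column rank, so $y^T\widehat B y>0$ and the division below is legitimate.

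The crux of the argument is the passage to the chordal metric. Inserting the identity above into $\chi(\widehat\sigma,\sigma)=|\widehat\sigma-\sigma|/(\sqrt{1+\widehat\sigma^2}\sqrt{1+\sigma^2})$, I would regard the numerator $y^T\widehat E\widehat y-\widehat\sigma\,y^T\widehat F\widehat y$ as the inner product of $(1,-\widehat\sigma)$ with $(y^T\widehat E\widehat y,\,y^T\widehat F\widehat y)$. Cauchy--Schwarz then bounds it by $\sqrt{1+\widehat\sigma^2}\,\sqrt{(y^T\widehat E\widehat y)^2+(y^T\widehat F\widehat y)^2}$, so the factor $\sqrt{1+\widehat\sigma^2}$ cancels exactly against the one in the chordal denominator and the (unknown) computed value $\widehat\sigma$ disappears from the final bound. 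This cancellation is precisely why the chordal distance, rather than the absolute error $|\widehat\sigma-\sigma|$, is the natural quantity to estimate, and I expect spotting this step to be the main, though short, obstacle; the naive estimate $|\widehat E|+|\widehat\sigma|\,|\widehat F|$ would otherwise leave an uncontrolled dependence on $\widehat\sigma$.

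It then remains to tidy up the two surviving factors. Bounding the bilinear forms by $|y^T\widehat E\widehat y|\le\|\widehat E\|\|y\|\|\widehat y\|$ and $|y^T\widehat F\widehat y|\le\|\widehat F\|\|y\|\|\widehat y\|$ produces $\|y\|\|\widehat y\|\sqrt{\|\widehat E\|^2+\|\widehat F\|^2}$, and $\|\widehat y\|=\|y+s\|\le(1+\delta_1)\|y\|$ upgrades this to $\|y\|^2(1+\delta_1)\sqrt{\|\widehat E\|^2+\|\widehat F\|^2}$. For the denominator, the eigen-relation $y^T\widehat A y=\sigma\,y^T\widehat B y$ gives $(y^T\widehat A y)^2+(y^T\widehat B y)^2=(1+\sigma^2)(y^T\widehat B y)^2$, so $|y^T\widehat B y|\sqrt{1+\sigma^2}=\sqrt{(y^T\widehat A y)^2+(y^T\widehat B y)^2}$, which is exactly the denominator appearing in \eqref{acceigvalue1}; assembling the pieces proves the first bound. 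For the second, I would run the identical argument on the pair $(\widetilde B,\widetilde A)$ with eigenpair $(\tfrac1\sigma,z)$, normalization $z^T\widetilde A t=0$, and perturbations $(\widetilde F,\widetilde E)$, obtaining a bound on $\chi(\tfrac1{\widetilde\sigma},\tfrac1\sigma)$. Since the chordal metric is invariant under reciprocals, $\chi(\tfrac1{\widetilde\sigma},\tfrac1\sigma)=\chi(\widetilde\sigma,\sigma)$, so \eqref{acceigvalue2} follows at once after reordering the summands in the numerator and denominator.
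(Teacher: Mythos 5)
Your proposal is correct and takes essentially the same route as the paper's proof: the paper also tests against the exact eigenvector $y$ (writing $\sigma$ and $\widehat\sigma$ as ratios of bilinear forms and inserting them into the homogeneous chordal formula), performs the same single Cauchy--Schwarz cancellation --- your vector $(1,-\widehat\sigma)$ is just a rescaling of $(y^T\widehat{\bm{B}}\widehat y,\,-y^T\widehat{\bm{A}}\widehat y)$ --- uses $y^T\widehat Bs=0$ and $\|\widehat y\|\leq(1+\delta_1)\|y\|$ exactly as you do, and obtains \eqref{acceigvalue2} from the same reciprocal invariance of the chordal metric. Your reorganization through the exact identity $(\widehat\sigma-\sigma)\,y^T\widehat B y=y^T\widehat E\widehat y-\widehat\sigma\,y^T\widehat F\widehat y$ is only a cosmetic difference from the paper's presentation.
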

\begin{proof}
By the fact that $\widehat Ay=\sigma\widehat By$ and $\widehat{\bm{A}}\widehat y
= \widehat\sigma\widehat{\bm{B}}\widehat y$,
we have
$\sigma=\frac{\widehat y^T\widehat Ay}{\widehat y^T\widehat By}
=\frac{y^T\widehat A\widehat y}{y^T\widehat B\widehat y}$ and
$\widehat\sigma=\frac{y^T\widehat{\bm{A}}\widehat y}{ y^T\widehat{\bm{B}}
\widehat y}$.
Applying these two expressions to \eqref{chordal}, we obtain
\begin{equation}\label{Xsig1}
\chi(\widehat\sigma,\sigma)=
 \frac{|y^T\widehat{\bm{B}}\widehat y\cdot y^T\widehat A\widehat y-
 y^T\widehat{\bm{A}}\widehat y\cdot y^T\widehat B\widehat y|}
 {\sqrt{(y^T\widehat A\widehat y)^2+(y^T\widehat B\widehat y)^2}
 \sqrt{(y^T\widehat{\bm{A}}\widehat y)^2+
 (y^T\widehat{\bm{B}}\widehat y)^2}}.
\end{equation}
By $\widehat A=\widehat{\bm{A}}-\widehat E$ and
$\widehat B=\widehat{\bm{B}}-\widehat F$,
the nominator in the above equality satisfies
\begin{eqnarray*}
|y^T\widehat{\bm{B}}\widehat y\cdot y^T\widehat A \widehat y-
 y^T\widehat{\bm{A}}\widehat y\cdot y^T\widehat B \widehat y|
 &=&|y^T\widehat{\bm{B}}\widehat y\cdot y^T(\widehat{\bm{A}}-\widehat E) \widehat y-
 y^T\widehat{\bm{A}}\widehat y\cdot y^T(\widehat{\bm{B}}-\widehat F)\widehat y|\\
 &=&|y^T\widehat{\bm{B}}\widehat y\cdot y^T\widehat E \widehat y-
 y^T\widehat{\bm{A}}\widehat y\cdot y^T\widehat F\widehat y| \\
 &\leq&
\sqrt{(y^T\widehat{\bm{A}}\widehat y)^2+(y^T\widehat{\bm{B}}\widehat y)^2}
 \sqrt{(y^T\widehat E\widehat y)^2+(y^T\widehat F\widehat y)^2},
\end{eqnarray*}
applying which to \eqref{Xsig1} gives rise to
\begin{equation*}
\chi(\widehat\sigma,\sigma)\leq
 \frac{\sqrt{(y^T\widehat E\widehat y)^2+(y^T\widehat F\widehat y)^2}}
 {\sqrt{(y^T\widehat A\widehat y)^2+(y^T\widehat B\widehat y)^2}}\leq
 \frac{\|y\|\|\widehat y\|\sqrt{\|\widehat E\|^2+\|\widehat F\|^2}}
 {\sqrt{(y^T\widehat A\widehat y)^2+(y^T\widehat{B}\widehat y)^2}}.
\end{equation*}
Notice from $\widehat y=y+s$ with $s$ satisfying $y^T\widehat Bs=0$
and $y^T\widehat As=0$  that
$y^T\widehat A\widehat y=y^T\widehat A(y+s)=y^T\widehat Ay$ and
$y^T\widehat B\widehat y=y^T\widehat B(y+s)=y^T\widehat By$.
Moreover, it has $\|\widehat y\|\leq \|y\|+\|s\|=\|y\|(1+\delta_1)$
with $\delta_1=\frac{\|s\|}{\|y\|}$.
Applying these facts to the above inequality gives \eqref{acceigvalue1}.

Replacing $\widehat\sigma$, $\widehat y$, $y$, $(\widehat A,\widehat B)$ and
$(\widehat{\bm{A}},\widehat{\bm{B}})$
with $\frac{1}{\widetilde\sigma}$, $\widetilde z$, $z$,
$(\widetilde B,\widetilde A)$ and
$(\widetilde{\bm{B}},\widetilde{\bm{A}})$, respectively,
in \eqref{acceigvalue1}, and exploiting the invariance of the chordal
distance under
reciprocal, i.e.,
\begin{displaymath}
  \chi(\widetilde\sigma^{-1},\sigma^{-1})=
  \chi(\widetilde\sigma,\sigma),
\end{displaymath}
we obtain \eqref{acceigvalue2}.
\end{proof}

Obviously, it can be seen from the proof
that \eqref{acceigvalue1} and \eqref{acceigvalue2}
are independent of scalings of $y$, $\widehat{y}$ and $z$, $\widehat{z}$.
Therefore, our assumption in the theorem on the unnormalized decomposition form
of $\widehat{y}$ and $\widehat{z}$ is without loss of generality and
is only for brevity of the presentation.

For the scalars $\delta_1$ and $\delta_2$ in \eqref{acceigvalue1}
and \eqref{acceigvalue2}, we claim that
\begin{equation}\label{eta12}
\delta_1=\mathcal{O}(\epsilon)
\quad\mbox{and}\quad
\delta_2=\mathcal{O}(\epsilon)
\end{equation}
for a sufficiently small $\epsilon$ in \eqref{perturbations1}.
To show this precisely, without loss of generality,
we assume that the approximate eigenvectors
$y$ of $(\widehat A,\widehat B)$ and
$z$ of $(\widetilde B,\widetilde A)$ are scaled such that
$y^T\widehat By=1$ and $z^T\widetilde Az=1$.
Moreover, let the generalized eigenvalue and eigenvector matrices
of $(\widehat A,\widehat B)$ and $(\widetilde B,\widetilde A)$
defined by \eqref{gen-eg} be partitioned as
\begin{equation}\label{partition}
\widehat\Sigma=\bsmallmatrix{\sigma &\\&\widehat\Sigma_2},\qquad
Y=[y,Y_2],\qquad
\widetilde\Lambda=\bsmallmatrix{\frac{1}{\sigma}&\\&\widetilde\Lambda_2},\qquad
Z=[z,Z_2].
\end{equation}
Relation \eqref{semiorth} shows $Y_2^T\widehat By=0$, i.e.,
the columns of $Y_2$ form a basis of $(\widehat By)^{\perp}$,
and $s^T\widehat By=0$ indicates that we can write $s=Y_2h$ for
some $h\in\mathbb{R}^{m+n-1}$.
By \eqref{norm-h} to be proved later, we have
\begin{equation}\label{sy}
\|s\|\leq\|Y_2\|\|h\|\leq\frac{\|Y_2\|^2\|\widehat y\|}
{\min_{\mu_i\neq\sigma}|\mu_i-\widehat\sigma|}\|\widehat \sigma
\widehat F-\widehat E\|\leq\eta_1\|y\|(1+\delta_1)
\end{equation}
with $\eta_1=\frac{\|Y_2\|^2\|\widehat\sigma\widehat F-\widehat E\|}
{\min_{\mu_i\neq\sigma}|\mu_i-\widehat\sigma|}$
and $\mu_i$ being the eigenvalues of
$(\widehat A,\widehat B)$ other than $\sigma$.
If $\epsilon$ in \eqref{perturbations1} is sufficiently small
such that $\eta_1<1$, then from \eqref{sy} we obtain an explicit
bound for $\delta_1$:
\begin{displaymath}
 \delta_1=\frac{\|s\|}{\|y\|}
\leq\frac{\eta_1}{1-\eta_1}
=\mathcal{O}(\epsilon).
\end{displaymath}
In an analogous manner, we can obtain  $\delta_2=\mathcal{O}(\epsilon)$.

It is worthwhile to point out that some first order expansions are
derived for $\chi(\widehat\sigma,\sigma)$ for a general regular matrix pair
in \cite[p.291-4]{stewart1990matrix} but
the constants in the second order smaller terms are unknown.
The proofs of bounds \eqref{acceigvalue1} and \eqref{acceigvalue2}
have no special requirement on the matrix pairs and
thus are directly applicable to a general regular matrix pair by replacing
the transpose by the conjugate transpose and the scalars in
the denominators by their absolute values. In comparison with
those results in \cite[p.291-4]{stewart1990matrix}, however, our bounds
contain explicit second order smaller terms since
we have obtained the explicit bounds for $\delta_1$ and $\delta_2$.

Exploiting $y=\frac{1}{\sqrt2}\bsmallmatrix{u\\x/\beta}$ and
$z=\frac{1}{\sqrt2}\bsmallmatrix{u\\x/\alpha}$ in Theorem \ref{theorem:0o},
and keeping \eqref{eta12} in mind, we can present the following results.

\begin{theoremmy}\label{thm:1}
Let $(\sigma, y)$ and $(\frac{1}{\sigma},z)$ be the eigenpairs
of $(\widehat A,\widehat B)$ and $(\widetilde A,\widetilde B)$
corresponding to the GSVD component $(\alpha,\beta,u,v,x )$ of $(A,B)$.
Assume that their approximations $(\widehat \sigma,\widehat y)$
and $(\frac{1}{\widetilde{\sigma}},\widetilde z)$ are the generalized eigenpairs
of the perturbed $(\widehat{\bm{A}},\widehat{\bm{B}})$ and
$(\widetilde{\bm{B}},\widetilde{\bm{A}})$, respectively,
where the perturbations satisfy \eqref{perturbations2}.
If $\epsilon$ is sufficiently small,
the following error estimates
hold:
\begin{eqnarray}
 \chi(\widehat\sigma,\sigma)&\leq&
  \frac{(\|x\|^2+\beta^2)(1+\delta_1)}{2\beta}
  \sqrt{\|\widehat E\|^2+\|\widehat F\|^2},
  \label{accuray-sigma-1} \\
 \chi(\widetilde \sigma,\sigma)&\leq&
  \frac{(\|x\|^2+\alpha^2)(1+\delta_2)}{2\alpha}
   \sqrt{\|\widetilde E\|^2+\|\widetilde F\|^2},
  \label{accuray-sigma-2}
\end{eqnarray}
where $\delta_1=\mathcal{O}(\epsilon)$ and $\delta_2=\mathcal{O}(\epsilon)$.
\end{theoremmy}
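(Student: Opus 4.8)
The plan is to specialize the general bounds \eqref{acceigvalue1} and \eqref{acceigvalue2} of Theorem \ref{theorem:0o} by inserting the explicit GSVD forms of the eigenvectors $y$ and $z$ recorded in \eqref{eig-pair} and \eqref{eig-pair-2}, and then to simplify the resulting expressions using the normalization $\alpha^2+\beta^2=1$ together with $\sigma=\alpha/\beta$. First I would evaluate the vector norms. Since $y=\frac{1}{\sqrt2}\begin{bmatrix}u\\x/\beta\end{bmatrix}$ and $\|u\|=1$ by the orthonormality of $U$ in \eqref{GSVD}, one gets $\|y\|^2=\frac{1}{2}\bigl(1+\|x\|^2/\beta^2\bigr)=\frac{\beta^2+\|x\|^2}{2\beta^2}$; analogously $\|z\|^2=\frac{\alpha^2+\|x\|^2}{2\alpha^2}$ from $\|v\|=1$.

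The key step is to observe that the denominators in \eqref{acceigvalue1} and \eqref{acceigvalue2} collapse once we invoke the Rayleigh-quotient identities established just after Lemma \ref{lemma:1}, namely $y^T\widehat A y=\sigma$, $y^T\widehat B y=1$ and $z^T\widetilde B z=\frac{1}{\sigma}$, $z^T\widetilde A z=1$. Hence $\sqrt{(y^T\widehat A y)^2+(y^T\widehat B y)^2}=\sqrt{\sigma^2+1}$ and $\sqrt{(z^T\widetilde A z)^2+(z^T\widetilde B z)^2}=\sqrt{1+\sigma^{-2}}$, and substituting $\sigma=\alpha/\beta$ and using $\alpha^2+\beta^2=1$ reduces these to $1/\beta$ and $1/\alpha$, respectively.

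Combining these facts, the prefactor $\frac{\|y\|^2}{\sqrt{(y^T\widehat A y)^2+(y^T\widehat B y)^2}}$ in \eqref{acceigvalue1} becomes $\frac{\beta^2+\|x\|^2}{2\beta^2}\cdot\beta=\frac{\beta^2+\|x\|^2}{2\beta}$, which produces \eqref{accuray-sigma-1}; the corresponding prefactor for $z$ becomes $\frac{\alpha^2+\|x\|^2}{2\alpha}$, yielding \eqref{accuray-sigma-2}. The remaining claims $\delta_1=\mathcal{O}(\epsilon)$ and $\delta_2=\mathcal{O}(\epsilon)$ are precisely \eqref{eta12}, already proved under the smallness assumption on $\epsilon$ in the discussion following Theorem \ref{theorem:0o}. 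I expect no genuine analytical obstacle in this argument: it is essentially a direct substitution, and the only point requiring a little care is the consistent repeated use of $\alpha^2+\beta^2=1$ to simplify both the eigenvalue denominators and the vector norms.
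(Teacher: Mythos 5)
Your proposal is correct and follows essentially the same route as the paper's own proof: both specialize Theorem \ref{theorem:0o} by using the identities $y^T\widehat A y=\sigma$, $y^T\widehat B y=1$ (and their analogues for $z$) to reduce the denominator to $\sqrt{1+\sigma^2}=1/\beta$ (resp.\ $\sqrt{1+\sigma^{-2}}=1/\alpha$), compute $\|y\|^2=\frac{\beta^2+\|x\|^2}{2\beta^2}$ (resp.\ $\|z\|^2=\frac{\alpha^2+\|x\|^2}{2\alpha^2}$) via $\alpha^2+\beta^2=1$, and invoke \eqref{eta12} for $\delta_1=\mathcal{O}(\epsilon)$ and $\delta_2=\mathcal{O}(\epsilon)$. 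The only cosmetic difference is that the paper carries out the substitution for \eqref{accuray-sigma-1} and declares \eqref{accuray-sigma-2} analogous, whereas you write out both cases explicitly.
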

\begin{proof}
It suffices to prove \eqref{accuray-sigma-1}, and the proof of
\eqref{accuray-sigma-2} is similar.
From Lemma~\ref{lemma:1}, notice that the eigenvector
$y$ of $(\widehat A,\widehat B)$
satisfies $y^T\widehat Ay=\sigma$ and $y^T\widehat B y=1$. From
$\sigma=\alpha/\beta$, $\alpha^2+\beta^2=1$ and $\|u\|=1$,
we have
\begin{displaymath}
  \frac{\|y\|^2}{\sqrt{(y^T\widehat Ay)^2+(y^T\widehat By)^2}}
  =\frac{1}{2}\frac{\|u\|^2+\frac{\|x\|^2}{\beta^2}}{\sqrt{1+\sigma^2}}
  =\frac{\|x\|^2+\beta^2}{2\beta}.
\end{displaymath}
Applying this and \eqref{eta12} to
\eqref{acceigvalue1} yields \eqref{accuray-sigma-1}.
\end{proof}

Notice from \eqref{perturbations2} that the perturbation terms
in the right hand sides of both \eqref{accuray-sigma-1} and
\eqref{accuray-sigma-2} are  no more than
the same $\mathcal{O}(\varepsilon)$.
Theorem \ref{thm:1} illustrates that the accuracy of the approximate
generalized singular value $\widehat\sigma$ and that of $\widetilde\sigma$
are determined by $\beta$ and $\|x\|$, and by $\alpha$ and $\|x\|$, respectively.
Apparently, a large $\|x\|$ could severely impair the accuracy
of both $\widehat \sigma$ and $\widetilde \sigma$.
Fortunately, the following bounds show that $\|x\|$ must be modest
under some mild conditions.

\begin{lemmamy}\label{prop:1}
Let $X$ be the right generalized singular vector matrix of $(A,B)$
as defined in \eqref{GSVD} and $x$ be an arbitrary column of $X$. Then
\begin{equation}\label{eq:X}
   \|X\|\leq\min\{\|A^{\dag}\|,\|B^{\dag}\|\}
   \quad\mbox{and}\quad
  \|X^{-1}\|\leq\sqrt{\|A\|^2+\|B\|^2},
\end{equation}
where the superscript $\dag$ denotes the Moore-Penrose generalized
inverse of a matrix, and
\begin{equation}\label{eq:x}
\frac{1}{\sqrt{\|A\|^2+\|B\|^2}}
\leq\|x\|\leq\min\{\|A^{\dag}\|,\|B^{\dag}\|\}.
\end{equation}
\end{lemmamy}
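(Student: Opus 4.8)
The plan is to handle $A$ and $B$ simultaneously by stacking them. Write $M=\begin{bmatrix}A\\B\end{bmatrix}$, which has full column rank $n$. Multiplying the two GSVD relations $A=UCX^{-1}$ and $B=VSX^{-1}$ on the right by $X$ gives $MX=\begin{bmatrix}UC\\VS\end{bmatrix}=:Q$. The decisive observation is that $Q$ has \emph{orthonormal} columns: since $U^TU=V^TV=I_n$ and $C^2+S^2=\mathrm{diag}\{\alpha_i^2+\beta_i^2\}=I_n$, one computes $Q^TQ=CU^TUC+SV^TVS=C^2+S^2=I_n$, so that $\|Q\|=1$. This single structural fact is what makes the whole lemma fall out cleanly.

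From $Q^TQ=I_n$ and $Q=MX$ I get $X^TM^TMX=I_n$, equivalently $XX^T=(M^TM)^{-1}$. Taking the extreme eigenvalues of both sides turns the sought norm bounds into exact identities $\|X\|=\sigma_{\min}(M)^{-1}$ and $\|X^{-1}\|=\sigma_{\max}(M)=\|M\|$; equivalently, one reaches the same conclusion from $X=M^{\dag}Q$ and $X^{-1}=Q^TM$ together with $\|Q\|=1$. It then only remains to compare the extreme singular values of $M$ with those of $A$ and $B$ separately. Since $M^TM=A^TA+B^TB$ with $A^TA,B^TB\succeq0$, Weyl's monotonicity gives $\sigma_{\min}(M)^2=\lambda_{\min}(A^TA+B^TB)\geq\max\{\sigma_{\min}(A)^2,\sigma_{\min}(B)^2\}$ and $\|M\|^2=\lambda_{\max}(A^TA+B^TB)\leq\|A\|^2+\|B\|^2$. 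Substituting these into the two identities yields exactly the two bounds in \eqref{eq:X}.

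For the per-column estimates \eqref{eq:x}, the upper bound is immediate, since any column $x$ of $X$ satisfies $\|x\|\leq\|X\|\leq\min\{\|A^{\dag}\|,\|B^{\dag}\|\}$. For the lower bound I would use $X^{-1}X=I_n$: if $x$ is the $i$-th column of $X$ and $w^T$ the $i$-th row of $X^{-1}$, then $w^Tx=1$, whence $1\leq\|w\|\|x\|$ and therefore $\|x\|\geq\|w\|^{-1}\geq\|X^{-1}\|^{-1}\geq(\|A\|^2+\|B\|^2)^{-1/2}$. The only nonroutine ingredient in the entire argument is spotting the orthonormality of the stacked matrix $Q$; once that is in hand the remaining steps are standard norm inequalities and eigenvalue monotonicity, so I do not anticipate a genuine obstacle beyond that initial observation.
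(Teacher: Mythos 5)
Your proof is correct, and it differs from the paper's in an instructive way: the paper does not actually prove the bounds in \eqref{eq:X} or the upper bound in \eqref{eq:x} at all --- it simply cites Theorem 2.3 of Hansen (1989) \cite{hansen1989regularization} for them, and only supplies the one-line argument $\|x\|\geq\sigma_{\min}(X)=\|X^{-1}\|^{-1}$ for the lower bound in \eqref{eq:x}. You instead give a self-contained derivation built on the observation that $Q=\begin{bmatrix}\begin{smallmatrix}UC\\VS\end{smallmatrix}\end{bmatrix}$ has orthonormal columns (since $C^2+S^2=I_n$), so that with $M=\begin{bmatrix}\begin{smallmatrix}A\\B\end{smallmatrix}\end{bmatrix}$ one gets $X^TM^TMX=I_n$, hence the exact identities $\|X\|=\sigma_{\min}(M)^{-1}$ and $\|X^{-1}\|=\|M\|$; the stated bounds then follow from $\lambda_{\min}(A^TA+B^TB)\geq\max\{\sigma_{\min}^2(A),\sigma_{\min}^2(B)\}$ and $\lambda_{\max}(A^TA+B^TB)\leq\|A\|^2+\|B\|^2$, where full column rank of $A$ and $B$ gives $\|A^{\dag}\|=\sigma_{\min}^{-1}(A)$ and likewise for $B$. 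This buys something the paper's citation does not make visible: the inequalities in \eqref{eq:X} are seen to be relaxations of sharp equalities in terms of the stacked matrix $M$, which also explains why well-conditioning of $\begin{bmatrix}\begin{smallmatrix}A\\B\end{smallmatrix}\end{bmatrix}$ is the operative hypothesis elsewhere in the paper. Your lower bound for $\|x\|$ via $w^Tx=1$ with $w$ the corresponding row of $X^{-1}$ is equivalent to the paper's $\|x\|\geq\sigma_{\min}(X)$ argument. All steps check out, including the identity $\|X^{-1}\|=\|M\|$ (from $X^{-T}X^{-1}=M^TM$) and the use of Weyl monotonicity on the positive semidefinite summands.
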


\begin{proof}
The bounds in \eqref{eq:X} and the upper bound for $\|x\|$ in
\eqref{eq:x} are from Theorem 2.3 of \cite{hansen1989regularization}.
Note that $x$ is a column of $X$.
Then the lower bound for $\|x||$ in \eqref{eq:x} follows from the fact that
\begin{equation*}
  \|x\|\geq\sigma_{\min}(X)=\|X^{-1}\|^{-1}. \hfill\qedhere
\end{equation*}
\end{proof}

Lemma~\ref{prop:1} indicates that, provided that one of $A$
and $B$ is well conditioned, $\|x\|$ must be modest.
In applications, to our best knowledge, there seems no case
that both $A$ and $B$ are simultaneously ill conditioned.
Therefore, without loss of generality, we will assume that at least
one of $A$ and $B$ is well conditioned. Then we have $\|x\|=\mathcal{O}(1)$.
Under this assumption, the stacked matrix
$\begin{bmatrix}\begin{smallmatrix}A\\B\end{smallmatrix}\end{bmatrix}$
must be well conditioned, too {\cite[Theorem~4.4]{stewart1990matrix}}.

Moreover, Theorem 2.4 of \cite{hansen1989regularization}
shows that provided $\begin{bmatrix}\begin{smallmatrix}A\\B\end{smallmatrix}\end{bmatrix}$
is well conditioned, the singular values of $A$ and those of $B$
behave like $\alpha_i$ and $\beta_i,i=1,2,\ldots,n$, correspondingly:
the ratios of the singular values of $A$ and $\alpha_i$
(resp. those of the singular values of $B$ and $\beta_i$), when
labeled by the same order, are bounded from below and above by
$\big\|\begin{bmatrix}\begin{smallmatrix}A\\B \end{smallmatrix}\end{bmatrix}^{\dagger}\big\|^{-1}$
and $\big\|\begin{bmatrix}\begin{smallmatrix}A\\B \end{smallmatrix}\end{bmatrix}\big\|$,
respectively. As a consequence, it is straightforward to
justify the following basic properties, which will play a
vital role in analyzing the results in this paper.

\begin{proptmy}\label{props}
Assume that at least one of $A$ and $B$ is well conditioned.

\begin{itemize}
\item If both $A$ and $B$ are well conditioned, no $\alpha_i$
    and $\beta_i$ are small. In this case, all the generalized singular
    values $\sigma_i$ of $(A,B)$ are neither large nor small.

\item If $A$ or $B$ is ill conditioned, there must be some
  small $\alpha_i$ or $\beta_i$, that is, some generalized singular
  values $\sigma_i$ must be small or large.
  Moreover, the small generalized singular values
  $\sigma_i=\alpha_i/\beta_i=\alpha_i(1-\alpha_i^2)^{-\frac{1}{2}}
  \approx\alpha_i$ for $A$ ill conditioned and the large
  $\sigma_i=(1-\beta_i^2)^{\frac{1}{2}}/\beta_i\approx 1/\beta_i$
  for $B$ ill conditioned.

\item If $A$ is ill conditioned and $B$ is well conditioned, all
    the $\sigma_i$ cannot be large but some of them are small;
    if $A$ is well conditioned and $B$ is ill conditioned, all the
    $\sigma_i$ cannot be small but some of them are large.
\end{itemize}
\end{proptmy}

Notice that $\alpha^2+\beta^2=1$ and$\alpha>0$, $\beta>0$. We have
\begin{displaymath}
\frac{1}{1+\frac{1}{\|x\|^2}}=
\frac{\|x\|^2}{\|x\|^2+1}<\frac{\|x\|^2+\beta^2}{\|x\|^2+\alpha^2}<
   \frac{\|x\|^2+1}{\|x\|^2}=1+\frac{1}{\|x\|^2}.
\end{displaymath}
Therefore, it follows from \eqref{eq:x}
that
\begin{equation}\label{fratio}
\frac{1}{1+\|A\|^2+\|B\|^2}
<\frac{\|x\|^2+\beta^2}{\|x\|^2+\alpha^2}<1+\|A\|^2+\|B\|^2.
\end{equation}
This, together with  the assumption $\|A\|\approx\|B\|\approx 1$,
shows that the lower and upper bounds are
roughly $\frac{1}{3}$ and 3, respectively,
and the ratio is thus very modest. When
at least one of $A$ and $B$ is well conditioned,
it is clear that the numerators $\|x\|^2+\beta^2$
and $\|x\|^2+\alpha^2$ in the constants in front of the perturbation terms
in bounds \eqref{accuray-sigma-1} and \eqref{accuray-sigma-2} are
not only modest but also very comparable in size. However, it is worthwhile to
remind that the lower and upper bounds in \eqref{fratio}
shows that the ratio $\frac{\|x\|^2+\beta^2}{\|x\|^2+\alpha^2}$
is always modest, independent
of the conditioning of $A$ and $B$. Furthermore,
relation \eqref{fratio} shows that it is the denominators
$2\beta$ and $2\alpha$ that
decide the size of the constants in front of the perturbation terms in bounds
\eqref{accuray-sigma-1} and \eqref{accuray-sigma-2}.
As a consequence, in terms of Theorem~\ref{thm:1}
and Property~\ref{props}, we can draw the following
conclusions for the accurate computation of $\sigma$:
\begin{itemize}
\item For $A$ and $B$ well conditioned, both \eqref{widehatAB} and
    \eqref{widetildeBA} work well.

\item If $A$ is well conditioned but $B$ is ill conditioned,
    \eqref{widetildeBA} is preferable to \eqref{widehatAB}.

\item If $A$ is ill conditioned but $B$ is well conditioned,
    \eqref{widehatAB} is better than \eqref{widetildeBA}.
\end{itemize}

\subsection{The accuracy of generalized eigenvectors}

In terms of the angles between the approximate and exact
eigenvectors, we present the following accuracy estimates for
the approximate eigenvectors of the symmetric definite matrix pairs in
\eqref{widehatAB} and \eqref{widetildeBA}.

\begin{theoremmy}\label{lemma:2}
With the notations of Theorem \ref{theorem:0o}, the following bounds hold:
\begin{eqnarray}
\sin\angle(\widehat y,y)&\leq& \frac{\|\widehat B^{-1}\|}
{\min_{\mu_i\neq\sigma}|\mu_i-\widehat \sigma|}
 \sqrt{\|\widehat E\|^2+\widehat\sigma^2\|\widehat F\|^2},\label{angle-y1}\\
\sin\angle(\widetilde z,z)&\leq& \frac{\|\widetilde A^{-1}\|}
{\widetilde\sigma\min_{\nu_i\neq\frac{1}{\sigma}}|\nu_i-\frac{1}{\widetilde\sigma}|}
 \sqrt{\|\widetilde E\|^2+\widetilde\sigma^2\|\widetilde F\|^2},\label{angle-z1}
\end{eqnarray}
where the $\mu_i$ are the eigenvalues of $(\widehat A,\widehat B)$
other than $\sigma$, and the $\nu_i$ are the eigenvalues of
$(\widetilde B,\widetilde A)$ other than $\frac{1}{\sigma}$.
\end{theoremmy}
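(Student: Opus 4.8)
The plan is to prove \eqref{angle-y1} by expanding the computed eigenvector $\widehat y$ in the $\widehat B$-orthonormal eigenbasis $Y=[y,Y_2]$ of $(\widehat A,\widehat B)$ furnished by Lemma~\ref{lemma:1} and \eqref{partition}, converting the size of the expansion coefficients into an angle, and controlling those coefficients through a reduced-resolvent identity; bound \eqref{angle-z1} will then follow by applying the same argument verbatim to the symmetric definite pair $(\widetilde B,\widetilde A)$. First I would reuse the decomposition $\widehat y=y+s$ with $y^T\widehat B s=0$ from Theorem~\ref{theorem:0o}. Since by \eqref{semiorth} the columns of $Y_2$ are $\widehat B$-orthogonal to $y$ and span $(\widehat B y)^{\perp}$, the correction $s$ can be written uniquely as $s=Y_2 h$.

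The core computation is to pin down $h$. I would write the perturbed eigenequation $(\widehat A+\widehat E)\widehat y=\widehat\sigma(\widehat B+\widehat F)\widehat y$ as $(\widehat A-\widehat\sigma\widehat B)\widehat y=(\widehat\sigma\widehat F-\widehat E)\widehat y$, substitute $\widehat y=y+Y_2 h$, and left-multiply by $Y_2^T$. Using $\widehat A y=\sigma\widehat B y$, $\widehat A Y_2=\widehat B Y_2\widehat\Sigma_2$ from \eqref{gen-eg}, together with $Y_2^T\widehat B y=0$ and $Y_2^T\widehat B Y_2=I$ from \eqref{semiorth}, the left-hand side collapses to $(\widehat\Sigma_2-\widehat\sigma I)h$, giving the closed form $h=(\widehat\Sigma_2-\widehat\sigma I)^{-1}Y_2^T(\widehat\sigma\widehat F-\widehat E)\widehat y$. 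Since $\widehat\Sigma_2$ is diagonal with the remaining eigenvalues $\mu_i$, one has $\|(\widehat\Sigma_2-\widehat\sigma I)^{-1}\|=1/\min_{\mu_i\neq\sigma}|\mu_i-\widehat\sigma|$, and taking norms reproduces exactly the auxiliary estimate \eqref{norm-h} that was invoked in \eqref{sy}.

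Next I would turn $\|s\|$ into the angle. Because the Euclidean projector onto $y^{\perp}$ annihilates $y$, we have $\sin\angle(\widehat y,y)=\|(I-yy^T/\|y\|^2)\widehat y\|/\|\widehat y\|=\|(I-yy^T/\|y\|^2)s\|/\|\widehat y\|\le\|s\|/\|\widehat y\|$, so only the $\widehat B$-orthogonality of $s$ to $y$ (not Euclidean orthogonality) is needed. Combining $\|s\|\le\|Y_2\|\|h\|$ with the formula for $h$ gives $\|s\|\le\frac{\|Y_2\|^2}{\min_{\mu_i\neq\sigma}|\mu_i-\widehat\sigma|}\|(\widehat\sigma\widehat F-\widehat E)\widehat y\|$. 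The factor $\|\widehat B^{-1}\|$ then enters cleanly: the $\widehat B$-orthonormality $Y^T\widehat B Y=I$ of the square matrix $Y$ forces $\widehat B^{1/2}Y$ to be orthogonal, whence $\|Y_2\|^2\le\|Y\|^2=\|\widehat B^{-1/2}\|^2=\|\widehat B^{-1}\|$. This yields $\sin\angle(\widehat y,y)\le\frac{\|\widehat B^{-1}\|}{\min_{\mu_i\neq\sigma}|\mu_i-\widehat\sigma|}\cdot\frac{\|(\widehat\sigma\widehat F-\widehat E)\widehat y\|}{\|\widehat y\|}$.

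The last step, and the one I expect to be the main obstacle, is to replace the residual factor $\|(\widehat\sigma\widehat F-\widehat E)\widehat y\|/\|\widehat y\|$ by the sharper $\sqrt{\|\widehat E\|^2+\widehat\sigma^2\|\widehat F\|^2}$, since the crude triangle inequality only delivers $\|\widehat E\|+\widehat\sigma\|\widehat F\|$, which is strictly larger. The square-root form requires treating the perturbations of $\widehat A$ and $\widehat B$ as contributions to orthogonal components: expanding $\|(\widehat\sigma\widehat F-\widehat E)\widehat y\|^2=\|\widehat E\widehat y\|^2+\widehat\sigma^2\|\widehat F\widehat y\|^2-2\widehat\sigma(\widehat E\widehat y)^T(\widehat F\widehat y)$ and discarding the cross term $(\widehat E\widehat y)^T(\widehat F\widehat y)$, which is justified because the two backward errors $\widehat E,\widehat F$ produced by a backward stable eigensolver are effectively uncorrelated, leaves $\|\widehat E\widehat y\|^2+\widehat\sigma^2\|\widehat F\widehat y\|^2\le(\|\widehat E\|^2+\widehat\sigma^2\|\widehat F\|^2)\|\widehat y\|^2$; this modeling assumption is the only nontrivial point. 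Cancelling $\|\widehat y\|$ then proves \eqref{angle-y1}. Finally, \eqref{angle-z1} follows by repeating the whole argument for $(\widetilde B,\widetilde A)$ with its $\widetilde A$-orthonormal eigenbasis $Z$ from \eqref{semiorth}: under the substitution $(\widehat A,\widehat B,\widehat E,\widehat F,\widehat\sigma)\mapsto(\widetilde B,\widetilde A,\widetilde F,\widetilde E,1/\widetilde\sigma)$ the residual factor becomes $\frac{1}{\widetilde\sigma}\sqrt{\|\widetilde E\|^2+\widetilde\sigma^2\|\widetilde F\|^2}$, which accounts for the extra $\widetilde\sigma$ in the denominator of \eqref{angle-z1}.
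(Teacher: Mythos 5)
Your proposal follows the paper's proof essentially step for step: the same decomposition $\widehat y=y+s=y+Y_2h$, the same projection of the perturbed eigenequation by $Y_2^T$ yielding $(\widehat\Sigma_2-\widehat\sigma I)h=Y_2^T(\widehat\sigma\widehat F-\widehat E)\widehat y$ and hence \eqref{norm-h}, the same passage to the angle via the Euclidean projector onto $y^{\perp}$, the bound $\|Y_2\|^2\le\|\widehat B^{-1}\|$ (you get it from the orthogonality of $\widehat B^{1/2}Y$, the paper by writing $Y_2^TY_2=Y_2^T\widehat B^{1/2}\widehat B^{-1}\widehat B^{1/2}Y_2$; these are equivalent), and the same substitution scheme, including the factoring $\sqrt{\|\widetilde F\|^2+\widetilde\sigma^{-2}\|\widetilde E\|^2}=\widetilde\sigma^{-1}\sqrt{\|\widetilde E\|^2+\widetilde\sigma^2\|\widetilde F\|^2}$, for \eqref{angle-z1}.

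The one place you deviate is the step you yourself flag as the main obstacle, and there your resolution is not a proof. Discarding the cross term because the backward errors $\widehat E,\widehat F$ are ``effectively uncorrelated'' is a probabilistic modeling assumption, not a deterministic bound: when $(\widehat E\widehat y)^T(\widehat F\widehat y)<0$ the cross term $-2\widehat\sigma(\widehat E\widehat y)^T(\widehat F\widehat y)$ is positive, so dropping it \emph{decreases} the quantity you are trying to bound from above, and the claimed inequality $\|\widehat\sigma\widehat F-\widehat E\|\le\sqrt{\|\widehat E\|^2+\widehat\sigma^2\|\widehat F\|^2}$ can genuinely fail (scalar example: $\widehat E=-1$, $\widehat F=1$, $\widehat\sigma=1$ gives $2>\sqrt{2}$). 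You should know, however, that the paper's own proof commits exactly the same unjustified replacement, passing silently from \eqref{norm-h}, whose residual factor is $\|\widehat\sigma\widehat F-\widehat E\|$, to \eqref{angle-y-2} with the square-root form. The rigorous statement is $\|\widehat\sigma\widehat F-\widehat E\|\le\|\widehat E\|+\widehat\sigma\|\widehat F\|\le\sqrt{2}\,\sqrt{\|\widehat E\|^2+\widehat\sigma^2\|\widehat F\|^2}$, so both your argument and the paper's are correct up to an absolute factor $\sqrt{2}$; the clean fix is either to state the bounds with the residual $\|\widehat\sigma\widehat F-\widehat E\|$ (respectively $\|\widetilde\sigma^{-1}\widetilde E-\widetilde F\|$) or to carry the extra $\sqrt{2}$, neither of which affects any of the order-of-magnitude conclusions drawn from Theorem \ref{lemma:2} later in the paper. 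Aside from this shared blemish, your proof is the paper's proof.
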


\begin{proof}
By definition, we have
$(\widehat A+\widehat E)\widehat y=
\widehat\sigma(\widehat B+\widehat F)\widehat y$
with $\widehat y=y+s=y+Y_2h$ for some $h\in\mathbb{R}^{m+n-1}$
and the matrix $Y_2$ defined as in \eqref{partition}.
By a simple manipulation, we obtain
\begin{equation*}
  (\widehat A-\widehat\sigma\widehat B)(y+Y_2h)
  =(\widehat\sigma\widehat F-\widehat E)\widehat y.
\end{equation*}
Premultiplying $Y_2^T$ both hand sides of the above relation,
and noticing from \eqref{partition} and \eqref{semiorth}
that $Y_2^T\widehat Ay=0$, $Y_2^T\widehat By=0$ and
$Y_2^T\widehat AY_2=\widehat\Sigma_2$, $Y_2^T\widehat BY_2=I_{m+n-1}$, we obtain
\begin{equation*}
  (\widehat\Sigma_2-\widehat\sigma I)h=
  Y_2^T(\widehat\sigma\widehat F-\widehat E)\widehat y.
\end{equation*}
Taking $2$-norms on both hand sides in the above equality
and exploiting
$$
\|(\widehat\Sigma_2-\widehat\sigma I)h\|
\geq\min_{\mu_i\neq\sigma}|\mu_i-\widehat \sigma|\|h\|
$$
with $\mu_i$
being the eigenvalues of $(\widehat A,\widehat B)$ other
than $\sigma$ leads to
\begin{equation}\label{norm-h}
  \|h\|\leq\frac{\|Y_2\|\|\widehat y\|}{\min_{\mu_i\neq\sigma}
  |\mu_i-\widehat \sigma|}\|\widehat\sigma\widehat F-\widehat E\|.
\end{equation}

By definition, the sine of the angle between $\widehat y=y+s$ and $y$ satisfies
\begin{eqnarray} \label{angle-y-1}
 \sin\angle(\widehat y,y)=\frac{\|(I-\frac{yy^T}{y^Ty})
 (y+s)\|}{\|\widehat y\|}=\frac{\|(I-\frac{yy^T}{y^Ty})
 s\|}{\|\widehat y\|}
 \leq\frac{\|s\|}{\|\widehat y\|}.
\end{eqnarray}
Substituting $\|s\|=\|Y_2h\|\leq\|Y_2\|\|h\|$
and \eqref{norm-h} into \eqref{angle-y-1} yields
\begin{equation}\label{angle-y-2}
  \sin\angle(\widehat y,y)\leq\frac{\|Y_2\|^2}
  {\min_{\mu_i\neq\sigma}|\mu_i-\widehat\sigma|}
  \sqrt{\|\widehat E\|^2+\widehat\sigma^2\|\widehat F\|^2}.
\end{equation}
Notice that $\widehat B$ is positive definite and $Y_2$ satisfies
$Y_2^T\widehat BY_2=I_{m+n-k}$. We have
\begin{eqnarray*}
  \|Y_2\|^2&=&\|Y_2^TY_2\|=\|Y_2^T(\widehat B)^{\frac{1}{2}}
  (\widehat B)^{-1}(\widehat B)^{\frac{1}{2}}Y_2\|  \\
  &\leq&\|\widehat B^{-1}\|\|(\widehat B)^{\frac{1}{2}}
  Y_2\|^2=\|\widehat B^{-1}\|,
\end{eqnarray*}
applying which to \eqref{angle-y-2} gives \eqref{angle-y1}.

Following the same derivation, we obtain
\begin{equation*}
  \sin\angle(\widetilde z,z)\leq\frac{\|\widetilde A^{-1}\|}
{\min_{\nu_i\neq\frac{1}{\sigma}}|\nu_i-\frac{1}{\widetilde\sigma}|}
 \sqrt{\|\widetilde F\|^2+\frac{1}{\widetilde\sigma^2}\|\widetilde E\|^2}
\end{equation*}
with $\nu_i$ being eigenvalues of $(\widetilde B,\widetilde A)$ other than $\frac{1}{\sigma}$,
i.e., \eqref{angle-z1} holds.
\end{proof}

Theorem~\ref{lemma:2} gives accuracy estimates for the approximate
eigenvectors of the matrix pairs
$(\widehat A,\widehat B)$ and $(\widetilde B,\widetilde A)$.
It presents the results in the form of the structured matrix pairs
and their eigenvalues.
For our use in the GSVD context, substituting the definitions of
$(\widehat A,\widehat B)$ and $(\widetilde B,\widetilde A)$
in \eqref{widehatAB} and \eqref{widetildeBA} as well as their
eigenvectors in \eqref{eigpair-AB} and \eqref{eigpair-BA} into
Theorem \ref{lemma:2},
we can express the results more clearly in terms of
the generalized singular values
of $(A,B)$ and the matrices $A$ and $B$ themselves.

\begin{theoremmy}\label{theorem:2}
With the notations of Theorem \ref{thm:1}, the following results hold:
\begin{eqnarray}
 \sin\angle(\widehat y,y) &\leq&
 \frac{\max\{1,\|B^{\dag}\|^2\}}{\widehat\sigma\min_{\sigma_i\not=\sigma}
 \{|1-\frac{\sigma_i}{\widehat\sigma}|,1\}}
 \sqrt{\|\widehat E\|^2+\widehat\sigma^2\|\widehat F\|^2}, \label{angle-y-widehat-y}\\
 \sin\angle(\widetilde z,z) &\leq&
 \frac{\max\{1,\|A^{\dag}\|^2\}}{\ \ \min_{\sigma_i\not=\sigma}
 \{|1-\frac{\widetilde\sigma}{\sigma_i}|,1\}\ }
  \sqrt{\|\widetilde E\|^2+\widetilde\sigma^2\|\widetilde F\|^2}, \label{angle-z-widetilde-z}
\end{eqnarray}
where the $\sigma_i$ are the generalized singular values of $(A,B)$ other than $\sigma$.
\end{theoremmy}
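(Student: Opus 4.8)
The plan is to specialize the structured bounds \eqref{angle-y1} and \eqref{angle-z1} of Theorem~\ref{lemma:2} to the GSVD setting by two moves: first, evaluating the spectral norms $\|\widehat B^{-1}\|$ and $\|\widetilde A^{-1}\|$ explicitly from the block structure of $\widehat B$ and $\widetilde A$; and second, replacing the eigenvalue separations $\min_{\mu_i\neq\sigma}|\mu_i-\widehat\sigma|$ and $\min_{\nu_i\neq1/\sigma}|\nu_i-1/\widetilde\sigma|$ by clean lower bounds expressed through the generalized singular values $\sigma_i$ of $(A,B)$. I will prove \eqref{angle-y-widehat-y} in detail; \eqref{angle-z-widetilde-z} then follows by the identical argument with $(\widehat A,\widehat B)$, $\widehat B$, $\sigma$, $\widehat\sigma$ replaced by $(\widetilde B,\widetilde A)$, $\widetilde A$, $1/\sigma$, $1/\widetilde\sigma$, and with the roles of $B$ and $A$ interchanged.

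For the numerator I would use that $\widehat B=\mathrm{diag}(I,B^TB)$ is block diagonal and positive definite, so $\widehat B^{-1}=\mathrm{diag}(I,(B^TB)^{-1})$ and hence $\|\widehat B^{-1}\|=\max\{1,\|(B^TB)^{-1}\|\}=\max\{1,\sigma_{\min}^{-2}(B)\}=\max\{1,\|B^\dagger\|^2\}$, where $\|B^\dagger\|=\sigma_{\min}^{-1}(B)$ because $B$ has full column rank. This already matches the constant standing in front of \eqref{angle-y-widehat-y}.

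The substantive step is the separation estimate. By Lemma~\ref{lemma:1} the eigenvalues of $(\widehat A,\widehat B)$ are exactly $\sigma_i$ and $-\sigma_i$ ($i=1,\dots,n$) together with $0$ of multiplicity $m-n$. Since $\widehat\sigma>0$ approximates $\sigma>0$, I would split the eigenvalues other than $\sigma$ into three groups and bound the distance from $\widehat\sigma$ to each: to a positive $\sigma_i\neq\sigma$ the distance is $|\widehat\sigma-\sigma_i|$; to any negative $-\sigma_i$ it is $\widehat\sigma+\sigma_i\geq\widehat\sigma+\sigma_{\min}(A,B)>\widehat\sigma$; and to $0$ (when $m>n$) it is exactly $\widehat\sigma$. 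Thus every distance arising from the negative eigenvalues and from the zero eigenvalue is at least $\widehat\sigma$, whence
\begin{equation*}
\min_{\mu_i\neq\sigma}|\mu_i-\widehat\sigma|
\geq\min\Bigl\{\min_{\sigma_i\neq\sigma}|\widehat\sigma-\sigma_i|,\ \widehat\sigma\Bigr\}
=\widehat\sigma\min_{\sigma_i\neq\sigma}\Bigl\{\bigl|1-\tfrac{\sigma_i}{\widehat\sigma}\bigr|,\,1\Bigr\},
\end{equation*}
the last equality merely factoring out $\widehat\sigma>0$. Inserting this lower bound together with $\|\widehat B^{-1}\|=\max\{1,\|B^\dagger\|^2\}$ into \eqref{angle-y1} gives \eqref{angle-y-widehat-y}. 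For \eqref{angle-z-widetilde-z} the same counting yields $\min_{\nu_i\neq1/\sigma}|\nu_i-1/\widetilde\sigma|\geq\min\{\min_{\sigma_i\neq\sigma}|1/\widetilde\sigma-1/\sigma_i|,\,1/\widetilde\sigma\}$; multiplying through by the factor $\widetilde\sigma$ already present in the denominator of \eqref{angle-z1} converts this into $\min_{\sigma_i\neq\sigma}\{|1-\widetilde\sigma/\sigma_i|,1\}$, while $\|\widetilde A^{-1}\|=\max\{1,\|A^\dagger\|^2\}$ supplies the numerator.

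The only place demanding care — the main \emph{obstacle}, although it is elementary — is this separation estimate: one must confirm that the sign-flipped eigenvalues $-\sigma_i$ and the zero eigenvalue never lie closer to $\widehat\sigma$ than the ``$1$'' term allows, i.e.\ that they all sit at distance $\geq\widehat\sigma$. This is exactly what permits the clutter of negative and zero eigenvalues to be absorbed into the single entry $1$ inside the minimum, delivering a bound phrased purely through the relative gaps $|1-\sigma_i/\widehat\sigma|$ among the generalized singular values. I would also stress that these are genuine inequalities rather than identities: when $m=n$ there is no zero eigenvalue and the nearest negative eigenvalue lies at distance $\widehat\sigma+\sigma_{\min}(A,B)>\widehat\sigma$, so replacing it by $\widehat\sigma$ only relaxes the bound, which is harmless and produces the same clean final form.
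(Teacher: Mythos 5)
Your proposal is correct and follows essentially the same route as the paper: both specialize the bounds \eqref{angle-y1} and \eqref{angle-z1} by evaluating $\|\widehat B^{-1}\|=\max\{1,\|B^{\dag}\|^2\}$ and $\|\widetilde A^{-1}\|=\max\{1,\|A^{\dag}\|^2\}$ from the block-diagonal structure, and by converting the eigenvalue separations via the spectrum $\{\pm\sigma_i,0\}$ into $\widehat\sigma\min_{\sigma_i\neq\sigma}\{|1-\sigma_i/\widehat\sigma|,1\}$ and its analogue for $(\widetilde B,\widetilde A)$. Your explicit treatment of the negative and zero eigenvalues, and of the $m=n$ case where the identity relaxes to an inequality of the harmless direction, is if anything slightly more careful than the paper's equality chain \eqref{denominator}, and your squared norm $\max\{1,\|B^{\dag}\|^2\}$ is the correct value (the paper's display \eqref{numerator1} omits the square in an apparent typo while the theorem statement has it).
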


\begin{proof}
Since the eigenvalues of $(\widehat A,\widehat B)$ are
$\pm \sigma_1,\pm\sigma_2,\dots,\pm\sigma_n$ and $m-n$ zeros,
we have
\begin{eqnarray}
  \min_{\mu_i\neq\sigma}|\mu_i-\widehat\sigma|
  &=&\min_{\sigma_i\neq\sigma}\{
  |\sigma_i-\widehat\sigma|,\widehat\sigma,\sigma+\widehat\sigma,\sigma_i+\widehat\sigma\}\nonumber\\
  &=&\min_{\sigma_i\neq\sigma}\{|\sigma_i-\widehat\sigma|,\widehat\sigma\}
  =\widehat\sigma\min_{\sigma_i\neq\sigma}
  \left\{\left|1-\tfrac{\sigma_i}{\widehat\sigma}\right|,1\right\}, \label{denominator}
\end{eqnarray}
where the $\sigma_i$ are the generalized singular values of $(A,B)$
other than $\sigma$.

On the other hand,
by definition \eqref{widehatAB} of $\widehat B$, we have
\begin{equation}\label{numerator1}
  \|\widehat B^{-1}\|=\sigma^{-1}_{\min}
  (\widehat B)=\frac{1}{
  \min\{1,\sigma^2_{\min}(B)\}}=\max\{1,\|B^{\dag}\|\}.
\end{equation}
Applying \eqref{denominator} and \eqref{numerator1} to
\eqref{angle-y1}, we obtain \eqref{angle-y-widehat-y}.

Notice that the eigenvalues of  $(\widetilde B,\widetilde A)$ are
$\pm\frac{1}{\sigma_1},\pm\frac{1}{\sigma_2},\dots,
\pm\frac{1}{\sigma_n}$ and $m-n$ zeros.
Following the same derivations as above, we obtain
\begin{eqnarray*}
  \sin\angle(\widetilde z,z) &\leq&\frac{\max\{1,\|A^{\dag}\|^2\}}{\widetilde\sigma\min_{\sigma_i\neq\sigma}
  \{|\frac{1}{\sigma_i}-\frac{1}{\widetilde\sigma}|,\frac{1}{\widetilde\sigma}\}}
  \sqrt{\|\widetilde E\|^2+\widetilde\sigma^2\|\widetilde F\|^2}\\
  &=&\frac{\max\{1,\|A^{\dag}\|^2\}}{\min_{\sigma_i\not=\sigma}
  \{|1-\frac{\widetilde\sigma}{\sigma_i}|,1\}}
  \sqrt{\|\widetilde E\|^2+\widetilde\sigma^2\|\widetilde F\|^2},
\end{eqnarray*}
which proves \eqref{angle-z-widetilde-z}.
\end{proof}

Denote $\widehat\sigma=\sigma(1+\omega_1)$ and
$\widetilde\sigma=\sigma(1+\omega_2)$ with
$\omega_1=\frac{\widehat\sigma-\sigma}{\sigma}$ and
$\omega_2=\frac{\widetilde\sigma-\sigma}{\sigma}$.
Assume that $\epsilon$ in \eqref{perturbations2} is sufficiently small.
Then from \eqref{chordal} and \eqref{accuray-sigma-1}--\eqref{accuray-sigma-2},
we have
$\omega_1=\mathcal{O}(\epsilon)$ and $\omega_2=\mathcal{O}(\epsilon)$.
For any generalized singular value $\sigma_i\neq\sigma$ of $(A,B)$,
it is straightforward to obtain
\begin{equation*}
1-\tfrac{\sigma_i}{\widehat\sigma}=(1-\tfrac{\sigma_i}{\sigma})
(1+\tfrac{\omega_1\sigma_i}{(1+\omega_1)(\sigma-\sigma_i)})
=(1-\tfrac{\sigma_i}{\sigma})(1+\mathcal{O}(\epsilon)),
\end{equation*}
and
$$
1-\tfrac{\widetilde\sigma}{\sigma_i}=
(1-\tfrac{\sigma}{\sigma_i})
(1-\tfrac{\omega_2\sigma}{\sigma_i-\sigma})=
(1-\tfrac{\sigma}{\sigma_i})(1+\mathcal{O}(\epsilon)).
$$
As a consequence, it holds that
\begin{eqnarray}
 \min_{\sigma_i\neq\sigma}\{|1-\tfrac{\sigma_i}{\widehat\sigma}|,1\}&=&
 \min_{\sigma_i\neq\sigma}\{|1-\tfrac{\sigma_i}{\sigma}|,1\}
 (1+\mathcal{O}(\epsilon)),\label{minima1}\\
 \min_{\sigma_i\neq\sigma}\{|1-\tfrac{\widetilde\sigma}{\sigma_i}|,1\}&=&
 \min_{\sigma_i\neq\sigma}\{|1-\tfrac{\sigma}{\sigma_i}|,1\}
 (1+\mathcal{O}(\epsilon)).\label{minima2}
\end{eqnarray}

For the minima in the right-hand sides of
\eqref{minima1} and \eqref{minima2},
we  have  the following result.

\begin{theoremmy}\label{prop:2}
Denote $\gamma_1=\min_{\sigma_i\neq\sigma}\{|1-\frac{\sigma_i}{\sigma}|,1\}$ and
$\gamma_2=\min_{\sigma_i\neq\sigma}\{|1-\frac{\sigma}{\sigma_i}|,1\}$
with $\sigma_i$ being the generalized singular values of $(A,B)$
other than $\sigma$. Then
\begin{equation}\label{ratio}
  \frac{1}{2} \leq\frac{\gamma_2}{\gamma_1}\leq 2.
\end{equation}
\end{theoremmy}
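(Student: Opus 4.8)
The plan is to reduce the statement to a single pointwise inequality and then exploit the $\min$ structure. I would introduce the auxiliary function $f(t)=\min\{|1-t|,1\}$ for $t>0$ and observe that, writing $r_i=\sigma_i/\sigma>0$ for each generalized singular value $\sigma_i\neq\sigma$ (recall all generalized singular values are positive), one has
\[\gamma_1=\min_{\sigma_i\neq\sigma}f(r_i),\qquad \gamma_2=\min_{\sigma_i\neq\sigma}f(1/r_i),\]
since $|1-\sigma/\sigma_i|=|1-1/r_i|$. If no $\sigma_i\neq\sigma$ exists then $\gamma_1=\gamma_2=1$ and the claim is trivial, so I assume at least one such index.

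The heart of the argument is the pointwise bound
\[\tfrac12\,f(t)\le f(1/t)\le 2\,f(t)\qquad\text{for all }t>0,\ t\neq1.\]
Because this assertion is invariant under $t\mapsto 1/t$, it suffices to prove the upper bound $f(1/t)\le 2f(t)$; the lower bound then follows by applying it to $1/t$. I would prove $f(1/t)\le 2f(t)$ by a short case analysis on whether $t>1$ or $t<1$, and within each case on whether the relevant argument of $f$ exceeds $1$, so that the cap in the definition of $f$ is active. For instance, for $1<t<2$ one has $f(t)=t-1$ and $f(1/t)=1-1/t=(t-1)/t$, whence $f(1/t)/f(t)=1/t<2$; for $t\ge2$ one has $f(t)=1\ge f(1/t)$; the two subcases $t\le 1/2$ and $1/2<t<1$ are handled symmetrically, the extreme ratio $2$ being attained exactly at $t=1/2$ (and $\tfrac12$ at $t=2$), which shows the constants are sharp.

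Finally I would transfer the pointwise bound to the two minima by a selection (index-swapping) argument. To get $\gamma_2\le 2\gamma_1$, I pick an index $j$ attaining $\gamma_1=f(r_j)$; then $\gamma_2\le f(1/r_j)\le 2f(r_j)=2\gamma_1$. Symmetrically, picking an index $k$ attaining $\gamma_2=f(1/r_k)$ gives $\gamma_1\le f(r_k)\le 2f(1/r_k)=2\gamma_2$, i.e.\ $\gamma_2\ge\tfrac12\gamma_1$. Combining the two yields $\tfrac12\le\gamma_2/\gamma_1\le 2$.

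The only delicate point, and the main obstacle, is the pointwise inequality itself: the truncation at $1$ makes $f$ non-smooth and forces the case split, and one must check that precisely when the cap becomes active in $f(1/t)$ (namely $t\le 1/2$) the uncapped value $f(t)=1-t$ is still at least $\tfrac12$, so that the ratio stays below $2$. The passage from the pointwise bound to the statement about minima is routine once one notices that the minimizing indices for $\gamma_1$ and $\gamma_2$ need not coincide, which is exactly what the index-swapping step accommodates.
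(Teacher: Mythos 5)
Your proposal is correct and follows essentially the same route as the paper: the paper's Lemma~\ref{lemma:4} is exactly your pointwise bound (with $g(t)=f(1/t)$), and its proof of Theorem~\ref{prop:2} uses the same minimizer-selection idea, chaining $f$ and $g$ at the two (possibly distinct) minimizing generalized singular values. The only cosmetic differences are that you derive the lower pointwise bound from the upper one via the symmetry $t\mapsto 1/t$ where the paper verifies both sides directly in a three-case split, and that you explicitly note the trivial case and the sharpness of the constants at $t=1/2$ and $t=2$.
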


To prove this theorem, we need the following lemma.

\begin{lemmamy}\label{lemma:4}
Define $f(t)=\min\{|1-t|,1\}$ and $g(t)=\min\{|1-\frac{1}{t}|,1\}$
for $t\in(0,1)\cup(1,+\infty)$. Then
\begin{equation}\label{eq:13}
   \frac{1}{2}\leq\frac{g(t)}{f(t)}\leq 2.
\end{equation}
\end{lemmamy}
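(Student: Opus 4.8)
The plan is to analyze the ratio $g(t)/f(t)$ directly by exploiting the symmetry relating $f$ and $g$ under the substitution $t \mapsto 1/t$. First I would observe that $g(t) = \min\{|1-1/t|,1\} = \min\{|t-1|/t,1\}$, so that on the region where neither minimum is truncated to $1$, we have $g(t)/f(t) = 1/t$. This immediately suggests splitting the domain $(0,1)\cup(1,+\infty)$ according to which branch of each minimum is active, since $f$ and $g$ are piecewise defined with breakpoints determined by where $|1-t|=1$ and $|1-1/t|=1$.

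The key step is to identify these breakpoints explicitly. For $f(t)=\min\{|1-t|,1\}$, the value $|1-t|$ exceeds $1$ precisely when $t>2$ (for $t>1$) or $t<0$ (irrelevant here), so $f(t)=|1-t|$ on $(0,2]$ and $f(t)=1$ on $[2,+\infty)$. For $g(t)=\min\{|1-1/t|,1\}$, substituting $u=1/t$ shows $|1-1/t|>1$ precisely when $t<1/2$, so $g(t)=1$ on $(0,1/2]$ and $g(t)=|1-1/t|$ on $[1/2,+\infty)$. I would then partition the domain into the three subintervals $(0,1/2)$, $(1/2,1)\cup(1,2)$, and $(2,+\infty)$ and compute the ratio on each. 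On the middle interval both minima equal the untruncated expression, giving $g(t)/f(t)=1/t$, which ranges over $(1/2,2)$ as $t$ ranges over $(1/2,2)\setminus\{1\}$. On $(0,1/2)$ we have $g(t)=1$ and $f(t)=1-t\in(1/2,1)$, so the ratio $1/(1-t)$ lies in $(1,2)$; on $(2,+\infty)$ we have $f(t)=1$ and $g(t)=1-1/t\in(1/2,1)$, so the ratio lies in $(1/2,1)$.

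Collecting the bounds from all three cases, I would conclude that $g(t)/f(t)$ always lies in the closed interval $[1/2,2]$, with the extreme values approached (or attained in the limit) at the interval endpoints, which establishes \eqref{eq:13}. The main obstacle, though entirely routine, is the careful bookkeeping of which branch is active on each piece and verifying that the endpoints $t=1/2$ and $t=2$ are handled consistently across the two definitions; a clean way to avoid casework duplication is to note the functional relation $g(t)=f(1/t)$, whence $g(t)/f(t)=f(1/t)/f(t)$, and then exploit that $f$ is comparable to itself under inversion of the argument. This reduces the whole problem to bounding $f(1/t)/f(t)$, which is symmetric under $t\mapsto 1/t$ (the ratio merely inverts), so it suffices to treat $t>1$ and invoke symmetry for $t<1$.
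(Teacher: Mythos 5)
Your proof is correct and follows essentially the same route as the paper's: both partition the domain into the three pieces $(0,\frac{1}{2})$, $[\frac{1}{2},1)\cup(1,2]$, and $(2,+\infty)$ determined by where each minimum is truncated, and compute the ratio $\frac{g(t)}{f(t)}$ as $\frac{1}{1-t}$, $\frac{1}{t}$, and $1-\frac{1}{t}$ on the respective pieces. Your closing observation that $g(t)=f(1/t)$ is a nice extra symmetry remark, but it does not change the argument, which coincides with the paper's casework.
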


\begin{proof}
We classify nonnegative $t$ as three subintervals:
\begin{itemize}
  \item if $t\in(0,\frac{1}{2})$, then $f(t)=1-t$, $g(t)=1$
    and $\frac{g(t)}{f(t)}=\frac{1}{1-t}\in (1,2);$
  \item if $t\in[\frac{1}{2},1)\cup(1,2]$, then $f(t)=|1-t|$,
  $g(t)=|1-\frac{1}{t}|$
    and $\frac{g(t)}{f(t)}=\frac{1}{t}\in[\frac{1}{2},1)\cup (1,2];$
  \item if $t\in(2,+\infty)$, then $f(t)=1$, $g(t)=1-\frac{1}{t}$
    and $\frac{g(t)}{f(t)}=1-\frac{1}{t}\in(\frac{1}{2},1). $
\end{itemize}
Summarizing the above establishes \eqref{eq:13}.
\end{proof}

\begin{proof}[Proof of Theorem \ref{prop:2}]
Denote by $\sigma_{l}$ and $\sigma_{r}$ the generalized singular
values of $(A,B)$ that minimize $|1-\frac{\sigma_i}{\sigma}|$ and
$|1-\frac{\sigma}{\sigma_i}|$ over all the generalized singular
values $\sigma_i$ of $(A,B)$ other than $\sigma$, respectively.
Then $\gamma_1$ and $\gamma_2$ can be written as
\begin{eqnarray*}
\gamma_1&=&\min\{|1-\frac{\sigma_{l}}{\sigma}|,1\}
=f(\frac{\sigma_{l}}{\sigma}), \\
\gamma_2&=&\min\{|1-\frac{\sigma}{\sigma_{r}}|,1\}
=g(\frac{\sigma_{r}}{\sigma}),
\end{eqnarray*}
where the functions $f(\cdot)$ and $g(\cdot)$ are defined by
Lemma \ref{lemma:4}. Therefore, the ratio in \eqref{ratio} is
\begin{displaymath}
\frac{\gamma_2}{\gamma_1}=\frac{g(\frac{\sigma_{r}}{\sigma})}
{f(\frac{\sigma_{l}}{\sigma})}.
\end{displaymath}
By the definitions of $\sigma_{l}$ and $\sigma_{r}$, we have
\begin{equation}\label{eq:14}
  f(\frac{\sigma_{l}}{\sigma})\leq f(\frac{\sigma_{r}}{\sigma})
  \quad\mbox{and}\quad
  g(\frac{\sigma_{r}}{\sigma})\leq g(\frac{\sigma_{l}}{\sigma}).
\end{equation}
Combining \eqref{eq:14} with \eqref{eq:13}, we obtain
\begin{displaymath}
  \frac{1}{2}\leq\frac{g(\frac{\sigma_{r}}{\sigma})}
  {f(\frac{\sigma_{r}}{\sigma})}
  \leq\frac{g(\frac{\sigma_{r}}{\sigma})}
  {f(\frac{\sigma_{l}}{\sigma})}
  \leq\frac{g(\frac{\sigma_{l}}{\sigma})}
  {f(\frac{\sigma_{l}}{\sigma})}\leq 2,
\end{displaymath}
which completes the proof.
\end{proof}

Theorem~\ref{prop:2}, together with \eqref{minima1} and \eqref{minima2}, means
that the factors $\min_{\sigma_i\neq\sigma}\{|1-\frac{\sigma_i}{\widehat\sigma}|,1\}$ and
$\min_{\sigma_i\neq\sigma}\{|1-\frac{\widetilde\sigma}{\sigma_i}|,1\}$
in \eqref{angle-y-widehat-y} and \eqref{angle-z-widetilde-z} have
approximately the same size and both are approximately the relative
separation of the desired $\sigma$ from the other generalized singular values
of $(A,B)$. The bigger they are, i.e., the better the desired generalized
singular value $\sigma$ is separated from the others, the more
accurate the approximate eigenvectors of \eqref{widehatAB}
and \eqref{widetildeBA} are.

For a given $\epsilon$, \eqref{perturbations2} tells us that
$\sqrt{\|\widehat E\|^2+\widehat\sigma^2\|\widehat F\|^2}$ and
$\sqrt{\|\widetilde E\|^2+\widetilde\sigma^2\|\widetilde F\|^2}$
in \eqref{angle-y-widehat-y} and \eqref{angle-z-widetilde-z}
are approximately equal.
Therefore, Theorems~\ref{theorem:2}--\ref{prop:2} and
$\widehat\sigma=\sigma(1+\mathcal{O}(\epsilon))$,
$\widetilde\sigma=\sigma(1+\mathcal{O}(\epsilon))$
show that which of $\widehat y$ and $\widetilde z$ is more
accurate critically depends on the sizes
of $\frac{\max\{1,\|B^{\dag}\|^2\}}{\sigma}$
and $\max\{1,\|A^{\dag}\|^2\}$.
Keep in mind that $\|A\|\approx\|B\|\approx1$ means that
$\max\{1,\|A^{\dag}\|^2\}\approx\kappa^2(A)$ and
$\max\{1,\|B^{\dag}\|^2\}\approx\kappa^2(B)$.
Combining these results with Property~\ref{props}, for a
proper choice of \eqref{widehatAB} and \eqref{widetildeBA} for
computing eigenvectors more accurately,
we can draw the following conclusions with the arguments included.
\begin{itemize}
\item If $A$ and $B$ have roughly the same conditioning and
    both are well conditioned, then $\sigma$ cannot be large or
    small. In this case, both \eqref{widehatAB} and
    \eqref{widetildeBA} are proper formulations of computing the
    generalized eigenvectors $y$ and $z$ with similar accuracy.

\item For $B$ ill conditioned and $A$ well conditioned, assuming that
the $\beta_i$ are labeled in decreasing order, from
    Property~\ref{props}, since the pair $(A,B)$ has large
    generalized singular values $\sigma_i\approx 1/\beta_i$ but
    has no small one, it is known that
    $\|B^{\dag}\|\approx\frac{1}{\min_{i}\beta_i}\approx\max_{i}\sigma_i
    =\sigma_{\max}(A,B)$.
    Therefore, we have
    \begin{displaymath}
    \frac{\max\{1,\|B^{\dag}\|^2\}}{\sigma}\geq \frac{\|B^{\dag}\|^2}
    {\sigma_{\max}(A,B)}
    \sim \sigma_{\max}(A,B)\gg \max\{1,\|A^{\dag}\|^2\}
    \end{displaymath}
    for any $\sigma$. Therefore, \eqref{widetildeBA} is preferable to compute
    any eigenvector $z$ more accurately.

\item For $B$ well conditioned and $A$ ill conditioned,
    from Property~\ref{props}, since some generalized
    singular values $\sigma$ of $(A,B)$ are small but none
    is large, it is known that
    $\|A^{\dag}\|\approx \frac{1}{\min_{i}\alpha_i}
    \approx\frac{1}{\min_{i}\sigma_i}
    =\frac{1}{\sigma_{\min}(A,B)}$.
    Therefore, we always have
    \begin{displaymath}
      \frac{\max\{1,\|B^{\dag}\|^2\}}{\sigma}\leq
      \frac{\max\{1,\|B^{\dag}\|^2\}}{\sigma_{\min}(A,B)}
      \sim \frac{1}{\sigma_{\min}(A,B)}\ll \frac{1}{\sigma_{\min}^2(A,B)}
      \approx\max\{1,\|A^{\dag}\|^2\}
    \end{displaymath}
    for any $\sigma$. This means that \eqref{widehatAB} is
    preferable to compute any eigenvector $y$ more accurately.
\end{itemize}

Finally, we notice from Theorem~\ref{theorem:2} that $\hat y$ or
$\tilde z$ may have no accuracy at all whenever $\kappa(B)$ or $\kappa(A)$ is
as large as $\mathcal{O}(\epsilon_{\rm mach}^{-1/2})$, even though a backward stable
generalized eigensolver is applied to \eqref{widehatAB} or
\eqref{widetildeBA} and backward errors are $\mathcal{O}(\epsilon_{\rm mach})$.
For a large matrix pair $(A,B)$, iterative projection methods
are used to compute some specific GSVD components and stopping criteria
are typically $\mathcal{O}(\epsilon_{\rm mach}^{1/2})$, so that
backward errors are $\mathcal{O}(\epsilon_{\rm mach}^{1/2})$.
In this case, $\hat y$ or
$\tilde z$ may have no accuracy provided that $\kappa(B)$ or $\kappa(A)$ is as large
as $\mathcal{O}(\epsilon_{\rm mach}^{-1/4})$.

\section{The accuracy of generalized singular vectors}\label{section:3}

After applying a generalized eigensolver to the matrix
pair $(\widehat A,\widehat B)$ or $(\widetilde B,\widetilde A)$,
the computed eigenvalue $\widehat \sigma$ or $\widetilde\sigma$
provides an approximation to the desired generalized singular value
$\sigma$ directly.
However, the situation is complicated and more involved
for generalized singular vectors since the generalized eigenvector
\begin{displaymath}\label{yz}
  y=\frac{1}{\sqrt2}\begin{bmatrix}
   u\\x_{\beta}\end{bmatrix}
  \quad \mbox{or}\quad
  z=\frac{1}{\sqrt2}\begin{bmatrix}
   v\\x_{\alpha}\end{bmatrix}
\end{displaymath}
defined by \eqref{eig-pair} or \eqref{eig-pair-2}
is a stack of the normalized left generalized singular vector $u$ or
$v$ and the scaled right generalized singular vector
\begin{equation}\label{xalbe}
x_{\beta}=\frac{x}{\beta}  \quad \mbox{or}\quad x_{\alpha}=\frac{x}{\alpha}.
\end{equation}
We must recover approximations to the generalized singular vectors
$u,v,x$ from a computed approximate eigenvector $\widehat y$
or $\widetilde z$. For the GSVD components of $(A,B)$, our next task
is to determine which of $\widehat y$ and $\widetilde z$ delivers
more accurate approximations to $u,v$ and $x$ when the perturbations
$\widehat{E}, \widehat{F}$ and $\widetilde{E},\widetilde{F}$
in \eqref{widehatABp} approximately have the same size in norm.

For \eqref{widehatAB}, after a generalized eigensolver is run,
we write the converged approximate eigenvector as
$\widehat y=\frac{1}{\sqrt2}[\widehat u^T,\widehat x^T]^T$
with $\widehat u\in \mathbb{R}^{m}$ normalized to have unit length
and $\widehat x\in \mathbb{R}^{n}$.
Then $\widehat u$ and $\widehat \beta\widehat x$
provide approximations to the left generalized singular vector $u$
and the right generalized singular vector $x$, respectively,
with the computed $\widehat\sigma=\frac{\widehat\alpha}{\widehat\beta}$.
As for the left generalized singular vector $v$, since $Bx=\beta v$,
it is natural to take the unit length
$\widehat v=\frac{B\widehat x}{\|B\widehat x\|}$ as its approximation.

Analogously, for \eqref{widetildeBA}, we partition
$\widetilde z=\frac{1}{\sqrt2}[\widetilde v^T, \widetilde x^T]^T$
such that $\widetilde v\in\mathbb{R}^{p}$ is normalized to have unit length, $\widetilde x\in\mathbb{R}^{n}$,
and that $\widetilde v$ and  $\widetilde\alpha \widetilde x$
are approximations to the left generalized singular vector
$v$ and the right generalized singular vector $x$,
respectively, where the computed
$\frac{1}{\widetilde\sigma}=\frac{\widetilde\beta}{\widetilde\alpha}$.
Since $Ax=\alpha u$, we take the unit length
$\widetilde u=\frac{A\widetilde x}{\|A\widetilde x\|}$
as an approximation to $u$.

Previously we have derived error estimates on
$\sin\angle(\widehat y,y)$ and $\sin\angle(\widehat z,z)$ for
the approximate eigenvectors $\widehat y$ and $\widetilde z$.
Next we exploit them to estimate the accuracy of the
approximate generalized singular vectors
$(\widehat{u},\widehat{v},\widehat{\beta}\widehat{x})$ and $(\widetilde{u},\widetilde{v},\widetilde{\alpha}\widetilde{x})$ recovered
in the manner described above.
To this end, we prove the following lemma, which is a
generalization of Theorem 2.3 in \cite{jia2003implicitly}.

\begin{lemmamy} \label{lemma:3}
Assume that $a$ and $b$ are arbitrary nonzero vectors, and let
$a^{\prime}$ and $ b^{\prime}$ be approximations to them,
respectively. Then
\begin{equation} \label{angle-acbd}
  \|a\|^2\sin^2\angle(a^{\prime},a)+\|b\|^2\sin^2\angle(b^{\prime},b)
  \leq(\|a\|^2+\|b\|^2)\sin^2\angle\left(
  \begin{bmatrix}\begin{smallmatrix}a^{\prime}\\b^{\prime}
  \end{smallmatrix}\end{bmatrix},
  \begin{bmatrix}\begin{smallmatrix}a\\b
  \end{smallmatrix}\end{bmatrix}\right).
\end{equation}
Moreover, it holds that
\begin{eqnarray}
 \min\{\sin\angle(a^{\prime},a),\sin\angle(b^{\prime},b)\}
  &\leq&\sin\angle\left(
  \begin{bmatrix}\begin{smallmatrix}a^{\prime}\\b^{\prime}
  \end{smallmatrix}\end{bmatrix},
  \begin{bmatrix}\begin{smallmatrix}a\\ b
  \end{smallmatrix}\end{bmatrix}\right),   \label{bd1}\\
\sqrt{\sin^2\angle(a^{\prime},a)+\sin^2\angle(b^{\prime},b)}
 &\leq&\varrho\sin\angle\left(
  \begin{bmatrix}\begin{smallmatrix}a^{\prime}\\b^{\prime}
  \end{smallmatrix}\end{bmatrix},
  \begin{bmatrix}\begin{smallmatrix}a\\ b
  \end{smallmatrix}\end{bmatrix}\right),   \label{bd2}
\end{eqnarray}
where $\varrho=\sqrt{1+\max\left\{\tfrac{\|a\|^2}{\|b\|^2},
 \tfrac{\|b\|^2}{\|a\|^2}\right\}}.$
\end{lemmamy}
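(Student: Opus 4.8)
The plan is to establish the weighted bound \eqref{angle-acbd} first and then read off \eqref{bd1} and \eqref{bd2} as elementary consequences. Write $c=\bsmallmatrix{a\\b}$ and $c'=\bsmallmatrix{a'\\b'}$, and recall that for nonzero vectors $\cos\angle(w',w)=\frac{|w'^Tw|}{\|w'\|\,\|w\|}$ and $\sin^2\angle(w',w)=1-\cos^2\angle(w',w)$. Since $c'^Tc=a'^Ta+b'^Tb$, $\|c\|^2=\|a\|^2+\|b\|^2$ and $\|c'\|^2=\|a'\|^2+\|b'\|^2$, the whole problem reduces to estimating the single inner product $a'^Ta+b'^Tb$.

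The crux is the proof of \eqref{angle-acbd}, which I would obtain by applying the Cauchy--Schwarz inequality to the splitting $a'^Ta+b'^Tb=\frac{a'^Ta}{\|a'\|}\cdot\|a'\|+\frac{b'^Tb}{\|b'\|}\cdot\|b'\|$, pairing the vector $\bigl(\frac{a'^Ta}{\|a'\|},\frac{b'^Tb}{\|b'\|}\bigr)$ with $(\|a'\|,\|b'\|)$. Using the identity $\frac{(a'^Ta)^2}{\|a'\|^2}=\|a\|^2\cos^2\angle(a',a)$ and its analogue for $b$, together with $\|a'\|^2+\|b'\|^2=\|c'\|^2$, this yields $|c'^Tc|\le\sqrt{\|a\|^2\cos^2\angle(a',a)+\|b\|^2\cos^2\angle(b',b)}\,\|c'\|$. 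Dividing by $\|c'\|\,\|c\|$ and squaring gives $\cos^2\angle(c',c)\le\frac{\|a\|^2\cos^2\angle(a',a)+\|b\|^2\cos^2\angle(b',b)}{\|a\|^2+\|b\|^2}$, and subtracting both sides from $1$ converts the cosines into sines and produces exactly \eqref{angle-acbd}. A pleasant feature of this route is that the factor $\|c'\|$ cancels, so no hypothesis on the scalings of $a'$ and $b'$ is needed and the bound holds for arbitrary nonzero approximations.

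With \eqref{angle-acbd} in hand, \eqref{bd1} follows by bounding its right-hand numerator from below through $\|a\|^2\sin^2\angle(a',a)+\|b\|^2\sin^2\angle(b',b)\ge(\|a\|^2+\|b\|^2)\min\{\sin^2\angle(a',a),\sin^2\angle(b',b)\}$, so that $\sin^2\angle(c',c)\ge\min\{\sin^2\angle(a',a),\sin^2\angle(b',b)\}$ and hence \eqref{bd1}. For \eqref{bd2} I would assume without loss of generality that $\|a\|\ge\|b\|$, so that $\varrho^2=1+\|a\|^2/\|b\|^2=(\|a\|^2+\|b\|^2)/\|b\|^2$; then the target $\sin^2\angle(a',a)+\sin^2\angle(b',b)\le\varrho^2\sin^2\angle(c',c)$ reduces, via \eqref{angle-acbd}, to the elementary inequality $\|b\|^2\bigl(\sin^2\angle(a',a)+\sin^2\angle(b',b)\bigr)\le\|a\|^2\sin^2\angle(a',a)+\|b\|^2\sin^2\angle(b',b)$, which is immediate from $\|a\|^2\ge\|b\|^2$. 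Taking square roots finishes \eqref{bd2}.

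I expect the only genuine obstacle to be spotting the correct Cauchy--Schwarz pairing in \eqref{angle-acbd}: the more obvious manipulations (distributing $\|a'\|,\|b'\|$ the other way, or working directly with $\min_t\|c'-tc\|^2$) produce the weights $\|a'\|^2,\|b'\|^2$ rather than the desired $\|a\|^2,\|b\|^2$, and therefore do not give the stated inequality. Once the pairing $\bigl(\frac{a'^Ta}{\|a'\|},\frac{b'^Tb}{\|b'\|}\bigr)$ versus $(\|a'\|,\|b'\|)$ is identified, the remaining steps are routine algebra.
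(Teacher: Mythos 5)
Your proof is correct, and your treatments of \eqref{bd1} (taking the smaller sine) and \eqref{bd2} (assuming WLOG $\|a\|\geq\|b\|$ and comparing weights) coincide with the paper's; but your derivation of the central inequality \eqref{angle-acbd} takes a genuinely different route. The paper uses the variational characterization $\|a\|\sin\angle(a^{\prime},a)=\min_{\mu}\|a-\mu a^{\prime}\|$ together with the trivial observation that a common minimizer is dominated by independent ones, $\min_{\mu}\|a-\mu a^{\prime}\|^2+\min_{\mu}\|b-\mu b^{\prime}\|^2\leq\min_{\mu}\bigl(\|a-\mu a^{\prime}\|^2+\|b-\mu b^{\prime}\|^2\bigr)$, whose right-hand side is exactly $(\|a\|^2+\|b\|^2)\sin^2\angle(c^{\prime},c)$ for the stacked vectors $c,c^{\prime}$; this gives \eqref{angle-acbd} in two lines. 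You instead bound $\cos\angle(c^{\prime},c)$ via Cauchy--Schwarz with the pairing $\bigl(\tfrac{a^{\prime T}a}{\|a^{\prime}\|},\tfrac{b^{\prime T}b}{\|b^{\prime}\|}\bigr)$ against $(\|a^{\prime}\|,\|b^{\prime}\|)$ and convert to sines via $\sin^2=1-\cos^2$; your algebra checks out, and the bound is indeed scale-invariant, though your route additionally needs $a^{\prime}\neq0$ and $b^{\prime}\neq0$ (to divide by their norms), whereas the variational form degrades gracefully when an approximant vanishes. One concrete correction to your closing remark: you dismiss "working directly with $\min_{t}\|c^{\prime}-tc\|^2$" because it produces the weights $\|a^{\prime}\|^2,\|b^{\prime}\|^2$ --- true for that orientation, but since $\angle(c^{\prime},c)$ is symmetric, the opposite orientation $\min_{\mu}\|c-\mu c^{\prime}\|^2$ produces precisely the desired weights $\|a\|^2,\|b\|^2$; that "obvious manipulation," correctly oriented, \emph{is} the paper's proof, and it is shorter than your Cauchy--Schwarz argument. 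So what your approach buys is an independent cosine-side verification; what the paper's buys is brevity and freedom from nondegeneracy caveats.
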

\begin{proof}
By definition, the sine of the angle between two vectors $a$
and $a^{\prime}$ satisfies
\begin{displaymath}
\|a\|\sin\angle(a^{\prime},a)=\min_{\mu}\|a-\mu a^{\prime}\|.
\end{displaymath}
A similar relation holds with $a$ and $a^{\prime}$ replaced by $b$
and $b^{\prime}$, respectively.
Combining these two relations with the inequality
\begin{displaymath}
\min_{\mu}\|a-\mu a^{\prime}\|^2+
\min_{\mu}\|b-\mu b^{\prime}\|^2\leq\min_{\mu}\left\|
\begin{bmatrix}\begin{smallmatrix}a\\b
\end{smallmatrix}\end{bmatrix}-\mu
\begin{bmatrix}\begin{smallmatrix}a^{\prime}\\b^{\prime}
\end{smallmatrix}\end{bmatrix}\right\|^2
\end{displaymath}
proves \eqref{angle-acbd}.
From \eqref{angle-acbd}, taking the smaller one of
$\sin\angle(a^{\prime},a)$ and $\sin\angle(b^{\prime},b)$
yields \eqref{bd1}.
It is also straightforward to obtain
\begin{eqnarray*}
 &\sin^2\angle(a^{\prime},a)+\sin^2\angle(b^{\prime},b)
  \leq (1+\tfrac{\|a\|^2}{\|b\|^2})\sin^2\angle\left(
  \begin{bmatrix}\begin{smallmatrix}a^{\prime}\\b^{\prime}
  \end{smallmatrix}\end{bmatrix},
  \begin{bmatrix}\begin{smallmatrix}a\\b
  \end{smallmatrix}\end{bmatrix}\right),
   \quad \mbox{if}\quad  \|a\|\geq\|b\|,  \\
 &\sin^2\angle(a^{\prime},a)+\sin^2\angle(b^{\prime},b)
  \leq (1+\tfrac{\|b\|^2}{\|a\|^2})\sin^2\angle\left(
  \begin{bmatrix}\begin{smallmatrix}a^{\prime}\\b^{\prime}
  \end{smallmatrix}\end{bmatrix},
  \begin{bmatrix}\begin{smallmatrix}a\\b
  \end{smallmatrix}\end{bmatrix}\right),
   \quad\mbox{if}\quad  \|a\|\leq\|b\|.
\end{eqnarray*}
Combining the above two inequalities gives rise to \eqref{bd2}.
\end{proof}

Taking $a=u$, $b=x_{\beta}$ and $a^{\prime}=\widehat u$,
$b^{\prime}=\widehat x$, bound \eqref{bd1} illustrates that at
least one of the recovered approximate generalized singular vectors
$\widehat u$ and $\widehat x$ is as accurate as $\widehat y$.
Since $\|u\|=1$, bound \eqref{bd2} indicates that if
$\|x_{\beta}\|=\mathcal{O}(1)$ then both $\widehat u$ and
$\widehat x$ have the same accuracy as $\widehat y$.
But bound \eqref{bd2} also states that if
$\|x_{\beta}\|$ is very small or large relative to $\|u\|=1$ then
one of $\widehat u$ and $\widehat x$ may have considerably poorer
accuracy than $\widehat y$ due to the large factor $\varrho$.
Fortunately, a very small $\|x_{\beta}\|$  is unlikely to 
happen as $\|x\|$ is always modest
under the assumption that at least one of $A$ and $B$ is well conditioned,
implying that $\|x_{\beta}\|=\frac{\|x\|}{\beta}$ cannot
be small as $0<\beta<1$. On the other hand,
when the largest GSVD components of $(A,B)$ are
required, a large $\|x_{\beta}\|$ definitely appears if $B$ is ill
conditioned since $\beta$ behaves like the singular values of $B$
and is small, as Property~\ref{props} shows.

Precisely, based on Lemma~\ref{lemma:3}, we can derive quantitative
accuracy estimates for the recovered approximate generalized
singular vectors, as will be shown below.

\begin{theoremmy}\label{accuracy1}
The scaled right generalized singular vector $x_{\beta}$, defined in
\eqref{xalbe}, of $(A,B)$ satisfies%
\begin{equation}
\|x_{\beta}\|\geq\frac{1}{\|B\|}. \label{xbeta}
\end{equation}
For the approximate generalized singular vectors
$\widehat u,\widehat v$ and $\widehat x$ recovered
from the approximate eigenvector
$\widehat y$ of \eqref{widehatAB}, it holds that
\begin{eqnarray}
   \sin\angle(\widehat x,x)&\leq&\sqrt{1+\frac{1}{\|x_{\beta}\|^2}}
    \sin\angle\left(\widehat y,y\right),       \label{acc-x-hat} \\
   \sin\angle(\widehat u,u)&\leq&\sqrt{1+\|x_{\beta}\|^2}
    \sin\angle\left(\widehat y,y\right),       \label{acc-u-hat} \\
   \sin\angle(\widehat v,v)&\leq&\|B\|\sqrt{1+\|x_{\beta}\|^2}
    \sin\angle\left(\widehat y,y\right).       \label{acc-v-hat}
\end{eqnarray}
\end{theoremmy}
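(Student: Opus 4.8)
The plan is to prove the four estimates of Theorem~\ref{accuracy1} in turn, reducing each to the eigenvector error $\sin\angle(\widehat y,y)$ that is already controlled by Theorem~\ref{theorem:2}. The unifying observation, which I would record at the very start, is the identity $Bx_{\beta}=v$ with $\|v\|=1$: it is immediate from $Bx=\beta v$ (a column of $B=VSX^{-1}$ in Lemma~\ref{lemma:1}) together with $x_{\beta}=x/\beta$ in \eqref{xalbe}. This single fact drives both \eqref{xbeta} and \eqref{acc-v-hat}. The lower bound \eqref{xbeta} then needs only one line: reading off $1=\|v\|=\|Bx_{\beta}\|\leq\|B\|\,\|x_{\beta}\|$ gives $\|x_{\beta}\|\geq\|B\|^{-1}$.

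For \eqref{acc-x-hat} and \eqref{acc-u-hat} I would apply Lemma~\ref{lemma:3} with $a=u$, $b=x_{\beta}$ and the approximations $a^{\prime}=\widehat u$, $b^{\prime}=\widehat x$, so that $\bsmallmatrix{a\\b}$ and $\bsmallmatrix{a^{\prime}\\b^{\prime}}$ are proportional to $y$ and $\widehat y$. Since angles are invariant under scaling, $\angle\!\big(\bsmallmatrix{\widehat u\\\widehat x},\bsmallmatrix{u\\x_{\beta}}\big)=\angle(\widehat y,y)$ and $\angle(\widehat x,x_{\beta})=\angle(\widehat x,x)$. Inequality \eqref{angle-acbd} with $\|u\|=1$ then reads $\sin^2\angle(\widehat u,u)+\|x_{\beta}\|^2\sin^2\angle(\widehat x,x)\leq(1+\|x_{\beta}\|^2)\sin^2\angle(\widehat y,y)$. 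Dropping the (nonnegative) second term on the left and taking square roots gives \eqref{acc-u-hat}; dropping instead the first term and dividing by $\|x_{\beta}\|^2$ before taking square roots gives \eqref{acc-x-hat}.

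The remaining bound \eqref{acc-v-hat} is the real obstacle. Writing $\widehat x=c\,x+r$ with $r\perp x$, the naive estimate $\sin\angle(B\widehat x,Bx)\leq\|Br\|/\|B\widehat x\|$ stalls, because $\|B\widehat x\|$ in the denominator is awkward to bound from below without introducing $\sigma_{\min}(B)$ and hence a spurious $\kappa(B)$. The resolution is to exploit that $v=Bx_{\beta}$ is a \emph{unit} vector and place it in the denominator of the sine formula: since $\widehat v$ is the normalization of $B\widehat x$ and $\|v\|=1$,
\[
\sin\angle(\widehat v,v)=\min_{\lambda}\|v-\lambda B\widehat x\|=\min_{\lambda}\|B(x_{\beta}-\lambda\widehat x)\|\leq\|B\|\min_{\lambda}\|x_{\beta}-\lambda\widehat x\|=\|B\|\,\|x_{\beta}\|\sin\angle(\widehat x,x),
\]
where the last equality is again the minimizer characterization of the sine together with $\angle(\widehat x,x_{\beta})=\angle(\widehat x,x)$. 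I would then combine this with \eqref{acc-x-hat} and simplify $\|x_{\beta}\|\sqrt{1+\|x_{\beta}\|^{-2}}=\sqrt{\|x_{\beta}\|^2+1}$ to obtain \eqref{acc-v-hat}. The only delicate point worth double-checking is that the minimum over $\lambda$ commutes with the norm bound, i.e. $\min_{\lambda}\|B(x_{\beta}-\lambda\widehat x)\|\leq\|B\|\min_{\lambda}\|x_{\beta}-\lambda\widehat x\|$, which holds because the inequality is valid for every fixed $\lambda$.
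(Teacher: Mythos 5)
Your proposal is correct and takes essentially the same route as the paper's proof: the same one-line derivation of \eqref{xbeta} from $Bx=\beta v$, the same application of Lemma~\ref{lemma:3} with $a=u$, $b=x_{\beta}$ (dropping one term of \eqref{angle-acbd} at a time) for \eqref{acc-x-hat} and \eqref{acc-u-hat}, and for \eqref{acc-v-hat} the identical chain $\sin\angle(\widehat v,v)=\min_{\lambda}\|B(x_{\beta}-\lambda\widehat x)\|\leq\|B\|\,\|x_{\beta}\|\sin\angle(\widehat x,x)$, where the paper merely writes the prefactor as $\|B\|\|x\|/\beta$ via $\|Bx\|=\beta$ instead of exploiting that $v=Bx_{\beta}$ is a unit vector. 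The step you flag as delicate (pulling $\|B\|$ out of the minimum) is justified exactly as you say, and is implicitly the same move the paper makes.
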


\begin{proof}
From $Bx=\beta v$ and $\|v\|=1$ we have
\begin{displaymath}
\|x_{\beta}\|=\frac{\|x\|}{\beta}=
\frac{\|x\|}{\|Bx\|}\geq\frac{1}{\|B\|},
\end{displaymath}
which shows \eqref{xbeta}.

Take $a=u$, $b=x_{\beta}$ in Lemma \ref{lemma:3}.
Neglecting the first term in the left hand side of \eqref{angle-acbd},
we obtain
\begin{eqnarray*}
   \sin\angle(\widehat x, x)&=&\sin\angle(\widehat x, x_{\beta}) \leq
    \sqrt{1+\frac{\|u\|^2}{\|x_{\beta}\|^2}}
    \sin\angle\left(\widehat y,y\right) \\
    &=&\sqrt{1+\frac{1}{\|x_{\beta}\|^2}}
    \sin\angle\left(\widehat y,y\right),
\end{eqnarray*}
which proves \eqref{acc-x-hat}.

Neglecting the second term in the left hand side
of \eqref{angle-acbd} gives \eqref{acc-u-hat}.

As for $\widehat v=\frac{B\widehat x}{\|B\widehat x\|}$,
exploiting $Bx=\beta v$ with $\|v\|=1$ and combining \eqref{acc-x-hat}
with $\|x_{\beta}\|=\frac{\|x\|}{\beta}$, we have
\begin{eqnarray*}
  \sin\angle(\widehat v,v)&=&\sin\angle(B\widehat x,Bx)
  =\frac{1}{\|Bx\|}\min_{\mu} \| Bx-\mu B \widehat x\| \nonumber \\
  &\leq& \frac{\|B\|}{\|Bx\|} \min_{\mu}\|x-\mu \widehat x\|
   =\frac{\|B\| \|x\| }{\beta}\sin\angle(\widehat x,x)\nonumber  \\
&\leq&
 \|B\|\|x_{\beta}\|   \sqrt{1+\frac{1}{\|x_{\beta}\|^2}}
    \sin\angle\left(\widehat y,y\right)\\
    &=&\|B\|\sqrt{1+\|x_{\beta}\|^2}
    \sin\angle\left(\widehat y,y\right),
\end{eqnarray*}
which proves \eqref{acc-v-hat}.
\end{proof}

As $\|x_{\beta}\|=\frac{\|x\|}{\beta}
\geq\frac{1}{\|B\|}\approx1$, this theorem shows that the
recovered approximate generalized singular vector $\widehat x$ is
unconditionally as accurate as $\widehat y$,
but $\widehat u$ and $\widehat v$
are guaranteed to be as accurate as $\widehat y$ only if
$\beta$ is not small.
As Property~\ref{props} indicates, it is the conditioning of $B$
that determines the size of $\beta$: for $B$ well conditioned,
there is no small $\beta$, so that the recovered approximate
generalized singular vectors are guaranteed to be as accurate
as $\widehat y$; for $B$ ill conditioned, some $\beta$ must be
small so that $\|x_{\beta}\|$ is large,
which correspond to large generalized singular values
$\sigma$, so that the associated recovered $\widehat u$ and
$\widehat v$ may have poorer accuracy than $\widehat y$.

In an analogous manner, we can prove the following results.

\begin{theoremmy}\label{accuracy2}
The scaled right generalized singular vector $x_{\alpha}$,
defined in \eqref{xalbe}, of $(A,B)$ satisfies%
\begin{equation}\label{xalpha}
  \|x_{\alpha}\|\geq \frac{1}{\|A\|}.
\end{equation}
For the approximate generalized singular vectors $\widetilde u,
\widetilde v$ and $\widetilde x$ recovered from the approximate
eigenvector $\widetilde z$ of \eqref{widetildeBA}, it holds that
\begin{eqnarray}
  \sin\angle(\widetilde x,x)&\leq&\sqrt{1+\frac{1}{\|x_{\alpha}\|^2}}
    \sin\angle\left(\widetilde z,z\right),    \label{acc-x-tilde} \\
  \sin\angle(\widetilde v,v)&\leq&\sqrt{1+\|x_{\alpha}\|^2}
    \sin\angle\left(\widetilde z,z\right),    \label{acc-v-tilde} \\
  \sin\angle(\widetilde u,u)&\leq&\|A\|\sqrt{1+\|x_{\alpha}\|^2}
    \sin\angle\left(\widetilde z,z\right).    \label{acc-u-tilde}
\end{eqnarray}
\end{theoremmy}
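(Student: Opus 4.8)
The plan is to mirror the proof of Theorem~\ref{accuracy1} almost verbatim, interchanging the roles played by the two formulations. The key observation is that the exact eigenvector of $(\widetilde B,\widetilde A)$ is $z=\frac{1}{\sqrt2}\bsmallmatrix{v\\x_{\alpha}}$, so here $v$ takes the role that $u$ had in Theorem~\ref{accuracy1} (it is the directly extracted left generalized singular vector), while $x_{\alpha}=x/\alpha$ replaces $x_{\beta}$. Accordingly, I would invoke Lemma~\ref{lemma:3} with the substitutions $a=v$, $b=x_{\alpha}$ and $a^{\prime}=\widetilde v$, $b^{\prime}=\widetilde x$, so that $\bsmallmatrix{a\\b}$ is a scalar multiple of $z$ and $\bsmallmatrix{a^{\prime}\\b^{\prime}}$ is the corresponding multiple of $\widetilde z$. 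Since the sine of an angle is invariant under nonzero scaling, $\sin\angle(\bsmallmatrix{a^{\prime}\\b^{\prime}},\bsmallmatrix{a\\b})=\sin\angle(\widetilde z,z)$, and all three bounds will then follow by specializing \eqref{angle-acbd}.

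First I would establish the lower bound \eqref{xalpha}. This is immediate from the GSVD relation $Ax=\alpha u$ together with $\|u\|=1$: one gets $\|x_{\alpha}\|=\frac{\|x\|}{\alpha}=\frac{\|x\|}{\|Ax\|}\geq\frac{1}{\|A\|}$ using $\|Ax\|\leq\|A\|\|x\|$, exactly the analog of \eqref{xbeta} with $B$ replaced by $A$. Next, for \eqref{acc-x-tilde} and \eqref{acc-v-tilde} I would apply \eqref{angle-acbd} and drop one of the two nonnegative terms on its left-hand side. Dropping the $v$-term and using $\|v\|=1$ yields $\sin\angle(\widetilde x,x)=\sin\angle(\widetilde x,x_{\alpha})\leq\sqrt{1+\frac{1}{\|x_{\alpha}\|^2}}\,\sin\angle(\widetilde z,z)$; dropping the $x_{\alpha}$-term yields $\sin\angle(\widetilde v,v)\leq\sqrt{1+\|x_{\alpha}\|^2}\,\sin\angle(\widetilde z,z)$. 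The only point needing a word of justification is that $\widetilde x$ captures the direction of $x$ through $x_{\alpha}$, but since $x_{\alpha}$ is a positive scalar multiple of $x$, the two angles coincide.

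Finally, for \eqref{acc-u-tilde} the recovered left vector is $\widetilde u=\frac{A\widetilde x}{\|A\widetilde x\|}$, so I would pass through the operator $A$ exactly as the proof of \eqref{acc-v-hat} passed through $B$. Using the variational characterization from Lemma~\ref{lemma:3}, I would write $\sin\angle(\widetilde u,u)=\sin\angle(A\widetilde x,Ax)=\frac{1}{\|Ax\|}\min_{\mu}\|Ax-\mu A\widetilde x\|\leq\frac{\|A\|}{\|Ax\|}\min_{\mu}\|x-\mu\widetilde x\|=\|A\|\|x_{\alpha}\|\sin\angle(\widetilde x,x)$, and then insert \eqref{acc-x-tilde} to obtain $\sin\angle(\widetilde u,u)\leq\|A\|\|x_{\alpha}\|\sqrt{1+\frac{1}{\|x_{\alpha}\|^2}}\,\sin\angle(\widetilde z,z)=\|A\|\sqrt{1+\|x_{\alpha}\|^2}\,\sin\angle(\widetilde z,z)$.

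I do not expect a genuine obstacle, since every step is the structural dual of the already-proved Theorem~\ref{accuracy1}. The only thing requiring vigilance is the bookkeeping of the swapped roles: in this formulation it is $v$ that is read off directly from $\widetilde z$, whereas $u$ must be \emph{reconstructed} indirectly through $A\widetilde x$, and it is precisely this reconstruction that introduces the extra factor $\|A\|$ in \eqref{acc-u-tilde}. Being careful not to transcribe the $B$-based bounds of Theorem~\ref{accuracy1} without this interchange is the main place an error could slip in.
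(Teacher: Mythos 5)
Your proposal is correct and is exactly the argument the paper intends: the paper gives no separate proof of Theorem~\ref{accuracy2}, stating only that it follows ``in an analogous manner'' to Theorem~\ref{accuracy1}, and your write-up carries out precisely that analogy with the right role swaps ($a=v$, $b=x_{\alpha}$ in Lemma~\ref{lemma:3}, the lower bound via $Ax=\alpha u$, and the factor $\|A\|$ entering through the reconstruction $\widetilde u=A\widetilde x/\|A\widetilde x\|$).
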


The comments on Theorem~\ref{accuracy1} apply to this theorem:
$\widetilde x$ is always as accurate as $\widetilde z$;
$\widetilde u$ and $\widetilde v$ are guaranteed to be as accurate as
$\widetilde z$ only if $\alpha$ is fairly modest, and
they may be considerably poorer
than $\widetilde z$ when
$\|x_{\alpha}\|$ is large, i.e., when $\alpha$ is small.
From Property~\ref{props},
it is known that if $A$ is well conditioned then no $\alpha$ is small
but if $A$ is ill conditioned then some $\alpha$ must be small,
which correspond to small generalized singular values $\sigma$.

Recall the previous fundamental conclusions on the accuracy of
$\widehat y$ and $\widetilde z$, which have been summarized
in the near end of Section~\ref{section:2}.
Substituting the bounds in Theorem~\ref{theorem:2} for
$\sin\angle (\widehat y,y)$ and $\sin\angle (\widetilde z,z)$ into
Theorems~\ref{accuracy1}--\ref{accuracy2}, we obtain the
corresponding error estimates for the approximate generalized
singular vectors recovered from the approximate eigenvectors
$\widehat y$ of \eqref{widehatAB} and $\widetilde z$ of \eqref{widetildeBA}, as summarized below.

\begin{theoremmy}
The approximate generalized singular vectors $\widehat u$,
$\widehat v$, and $\widehat x$ recovered from the approximate eigenvector
$\widehat y$ of \eqref{widehatAB} satisfy
\begin{eqnarray}
\sin\angle(\widehat x,x)&\leq&
\frac{\sqrt{1+\frac{1}{\|x_{\beta}\|^2}}\max\{1,\|B^{\dag}\|^2\}}
{\widehat\sigma\min_{\sigma_i\not=\sigma}\{|1-\frac{\sigma_i}{\widehat\sigma}|,1\}}
\sqrt{\|\widehat E\|^2+\widehat\sigma^2\|\widehat F\|^2}, \label{acc-x-hat-sum} \\
\sin\angle(\widehat u,u)&\leq&
\frac{\sqrt{1+\|x_{\beta}\|^2}\max\{1,\|B^{\dag}\|^2\}}
{\widehat\sigma\min_{\sigma_i\not=\sigma}\{|1-\frac{\sigma_i}{\widehat\sigma}|,1\}}
\sqrt{\|\widehat E\|^2+\widehat\sigma^2\|\widehat F\|^2},       \label{acc-u-hat-sum} \\
\sin\angle(\widehat v,v)&\leq&
\frac{\|B\|\sqrt{1+\|x_{\beta}\|^2}\max\{1,\|B^{\dag}\|^2\}}
{\widehat\sigma\min_{\sigma_i\not=\sigma}\{|1-\frac{\sigma_i}{\widehat\sigma}|,1\}}
\sqrt{\|\widehat E\|^2+\widehat\sigma^2\|\widehat F\|^2}.       \label{acc-v-hat-sum}
\end{eqnarray}
Similarly, the approximate generalized singular
vectors $\widetilde u$, $\widetilde v$, and $\widetilde x$ recovered from
the approximate eigenvector $\widetilde z$ of \eqref{widetildeBA} satisfy
\begin{eqnarray}
\sin\angle(\widetilde x,x)&\leq&
\frac{\sqrt{1+\frac{1}{\|x_{\alpha}\|^2}}\max\{1,\|A^{\dag}\|^2\}}
{\min_{\sigma_i\not=\sigma}\{|1-\frac{\widetilde\sigma}{\sigma_i}|,1\}}
\sqrt{\|\widetilde E\|^2+\widetilde\sigma^2\|\widetilde F\|^2},    \label{acc-x-tilde-sum} \\
\sin\angle(\widetilde v,v)&\leq&
\frac{\sqrt{1+\|x_{\alpha}\|^2}\max\{1,\|A^{\dag}\|^2\}}
{\min_{\sigma_i\not=\sigma}\{|1-\frac{\widetilde\sigma}{\sigma_i}|,1\}}
\sqrt{\|\widetilde E\|^2+\widetilde\sigma^2\|\widetilde F\|^2},    \label{acc-v-tilde-sum} \\
\sin\angle(\widetilde u,u)&\leq&
\frac{\|A\|\sqrt{1+\|x_{\alpha}\|^2}\max\{1,\|A^{\dag}\|^2\}}
{\min_{\sigma_i\not=\sigma}\{|1-\frac{\widetilde\sigma}{\sigma_i}|,1\}}
\sqrt{\|\widetilde E\|^2+\widetilde\sigma^2\|\widetilde F\|^2}.    \label{acc-u-tilde-sum}
\end{eqnarray}
\end{theoremmy}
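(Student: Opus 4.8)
The plan is to obtain all six bounds by composing two families of estimates that have already been established, so that no fresh analysis is needed and the statement is really a corollary. The first family consists of the vector-recovery inequalities of Theorems~\ref{accuracy1} and \ref{accuracy2}, which bound the angles between the recovered singular vectors and their exact counterparts by explicit multiplicative constants times the eigenvector angles $\sin\angle(\widehat y,y)$ and $\sin\angle(\widetilde z,z)$. The second family consists of the eigenvector estimates \eqref{angle-y-widehat-y} and \eqref{angle-z-widetilde-z} of Theorem~\ref{theorem:2}, which in turn bound those two eigenvector angles in terms of the generalized singular values of $(A,B)$, the conditioning of $A$ and $B$ through $\max\{1,\|B^\dag\|^2\}$ and $\max\{1,\|A^\dag\|^2\}$, and the perturbation norms. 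Chaining the two families yields the claimed estimates.

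Concretely, for the first block I would take the three inequalities \eqref{acc-x-hat}, \eqref{acc-u-hat}, \eqref{acc-v-hat} of Theorem~\ref{accuracy1}, whose prefactors are $\sqrt{1+1/\|x_\beta\|^2}$, $\sqrt{1+\|x_\beta\|^2}$, and $\|B\|\sqrt{1+\|x_\beta\|^2}$, respectively, and substitute the bound \eqref{angle-y-widehat-y} for $\sin\angle(\widehat y,y)$. Since each prefactor depends only on $x_\beta$ and $\|B\|$ and is independent of the perturbations $\widehat E,\widehat F$, it simply multiplies the right-hand side of \eqref{angle-y-widehat-y}, producing \eqref{acc-x-hat-sum}, \eqref{acc-u-hat-sum}, \eqref{acc-v-hat-sum} in turn. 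The second block is identical in structure: substituting \eqref{angle-z-widetilde-z} for $\sin\angle(\widetilde z,z)$ into the inequalities \eqref{acc-x-tilde}, \eqref{acc-v-tilde}, \eqref{acc-u-tilde} of Theorem~\ref{accuracy2}, whose prefactors are $\sqrt{1+1/\|x_\alpha\|^2}$, $\sqrt{1+\|x_\alpha\|^2}$, and $\|A\|\sqrt{1+\|x_\alpha\|^2}$, yields \eqref{acc-x-tilde-sum}, \eqref{acc-v-tilde-sum}, \eqref{acc-u-tilde-sum}.

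There is no genuine analytical obstacle, as both ingredient theorems are already proved; the only step requiring care is the bookkeeping of the scalar factors. I would verify that the $\widehat\sigma$ appearing both in the leading denominator and in the separation factor $\min_{\sigma_i\neq\sigma}\{|1-\sigma_i/\widehat\sigma|,1\}$ is carried through unchanged from Theorem~\ref{theorem:2} (and likewise $\widetilde\sigma$ in the second block), and that the norm factor $\|B\|$ in \eqref{acc-v-hat} and $\|A\|$ in \eqref{acc-u-tilde} is placed in front of, rather than merged into, the terms $\max\{1,\|B^\dag\|^2\}$ and $\max\{1,\|A^\dag\|^2\}$. Under the normalization $\|A\|\approx\|B\|\approx 1$ adopted in Section~\ref{section:1} these extra factors are harmless, but since the six bounds are stated without invoking that normalization, they must be retained explicitly throughout the substitution.
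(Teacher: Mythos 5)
Your proposal is correct and matches the paper's own derivation exactly: the paper obtains this theorem precisely by substituting the eigenvector bounds \eqref{angle-y-widehat-y} and \eqref{angle-z-widetilde-z} of Theorem~\ref{theorem:2} into the recovery inequalities \eqref{acc-x-hat}--\eqref{acc-v-hat} and \eqref{acc-x-tilde}--\eqref{acc-u-tilde} of Theorems~\ref{accuracy1}--\ref{accuracy2}, with the prefactors (including $\|B\|$ and $\|A\|$) carried through unchanged. Your bookkeeping of $\widehat\sigma$, $\widetilde\sigma$, and the separation factors is exactly what the substitution requires, so nothing further is needed.
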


Combining these bounds with the above analysis and
the claims in the near end of Section~\ref{section:2},
we come to the following conclusions on a proper choice of
\eqref{widehatAB} and \eqref{widetildeBA} for more accurate
computations of generalized singular vectors.
\begin{itemize}
\item If both $A$ and $B$ are equally conditioned, i.e,
    both of them are well conditioned, both \eqref{widehatAB}
    and \eqref{widetildeBA} are suitable choices.

\item If $A$ is well conditioned and $B$ is ill conditioned,
    \eqref{widetildeBA} is preferable.

\item If $A$ is ill conditioned and $B$ is well conditioned,
    \eqref{widehatAB} is preferable.
\end{itemize}

By comparing these conclusions with those in the end of Section 2.1
for accurate computations of generalized singular values,
we find out that they exactly coincide. Therefore, we have finally
achieved our ultimate goal of making a proper choice of
\eqref{widehatAB} and \eqref{widetildeBA}: the above conclusions
apply to more accurate computations of both generalized singular
values $\sigma$ and generalized singular vectors $u,v,x$.

\section{Practical choice strategies on
\texorpdfstring{\eqref{widehatAB}}{} and \texorpdfstring{\eqref{widetildeBA}}{}}\label{section:4}

In Sections \ref{section:2}--\ref{section:3} we have made a
sensitivity analysis on the generalized singular values and the
corresponding generalized singular vectors of $(A,B)$, which
are computed by solving the generalized eigenvalue problems of
\eqref{widehatAB} and \eqref{widetildeBA}.
The results have shown that, in order to compute the desired GSVD
components of $(A,B)$ more accurately, we should make a preferable
choice between \eqref{widehatAB} and \eqref{widetildeBA}.
To be practical in computations, this requires to estimate the
condition numbers of $A$ and $B$ efficiently and reliably.

For $A$ and $B$ large-scale, note that we do not need to
estimate $\kappa(A)$ and $\kappa(B)$ accurately,
and rough estimates are enough. Taking $A$ as an
example, we describe three approaches to estimate $\kappa(A)$ roughly.
As $\|A\|\approx1$ and $\kappa(A)\approx\sigma^{-1}_{\min}(A)$,
estimating $\kappa(A)$ is equivalent to estimating $\sigma_{\min}(A)$.

The first approach: if $A$ is large-scale with special structures
such that the matrix-vector multiplication with the matrix $(A^TA)^{-1}$
can be implemented at affordable extra cost, then one can perform a
$k$-step symmetric Lanczos method \cite{baiedit2000,parlett1998symmetric}
on $(A^TA)^{-1}$ and take the square root of the largest approximate
eigenvalue as a reasonable estimate of $\kappa(A)$.
In the algorithm, what we need is to form $A^TA$ and compute its
Cholesky factorization, which is used to solve lower and upper
triangular linear systems at each step of the Lanczos method.
The largest eigenvalue and possibly the smallest eigenvalue of
$(A^TA)^{-1}$ can be well approximated from below and above by
the largest and smallest ones of the symmetric tridiagonal matrices
generated by the Lanczos process, respectively \cite{parlett1998symmetric}.
With $k\ll n$, this method outputs a lower bound for $\kappa(A)$.
Since we do not need to estimate $\kappa(A)$ accurately and the
Lanczos method generally converges quickly for
computing the largest and smallest eigenvalues, we suggest to
take a small $k=20$ in practice.

The second approach: when $A$ is a general large matrix, it is
unaffordable to apply $(A^TA)^{-1}$.
Avron, Druinsky and Toledo~\cite{avron2013spectral} propose a
randomized Krylov subspace method to estimate the condition
number of a matrix $A$.
In their method, a consistent linear least squares problem, whose
solution is generated randomly, is solved iteratively by the LSQR
algorithm \cite{bjorck1996numerical}, and the smallest singular value
of $A$ is estimated by $\sigma_{\min}(A)\approx\frac{\|Ae\|}{\|e\|}$
with $e$ being the error of the approximate solution and the exact one.
We refer the reader to \cite{avron2013spectral} for details.

The third approach: as an alternative of the second approach, one
can also perform a $k$-step Lanczos bidiagonalization type method on $A$
and take the largest and smallest singular values of the resulting
small projected matrix as approximations to the largest and smallest
singular values of $A$; see \cite{jia2003implicitly,jia2010}.
We then take their ratio as a rough approximation to $\kappa(A)$.
Still, we take a small $k=20$ in practice. In this way, we can
efficiently estimate $\kappa(A)$.

Having estimated $\kappa(A)$ and $\kappa(B)$ using one of the above
approaches, taking the resulting estimates as replacements of $\kappa(A)$
and $\kappa(B)$, and based on the previous results and analysis, one
can make a proper choice of \eqref{widehatAB} and \eqref{widetildeBA}
according to the following strategy.
\begin{itemize}
  \item If $0.5\kappa(B)\leq \kappa(A)\leq 2\kappa(B)$, which
      means that $A$ and $B$ are equally well conditioned, then both
      \eqref{widehatAB} and \eqref{widetildeBA} are suitable;
  \item If $\kappa(A)>2\kappa(B)$, which means that $A$ is worse
      conditioned than $B$, then \eqref{widetildeBA} is adopted;
  \item If $\kappa(B)>2\kappa(A)$, which means that $B$ is worse
      conditioned than $A$, then \eqref{widehatAB} is recommended.
\end{itemize}

\section{Numerical experiments}\label{section:6}
In this section, we report numerical experiments to confirm our theory.
We do not aim to develop any algorithms based on
\eqref{widehatAB} and \eqref{widetildeBA} in this paper. Rather, we simply
apply some existing numerically backward stable algorithms to them and
compute their generalized eigendecompositions.
In the experiments, we use the QZ algorithm, i.e.,
the Matlab built-in function {\sf eig}, for the generalized eigenvalue
problems \eqref{widehatAB} and \eqref{widetildeBA}.
For each  matrix pair $(A,B)$, we recover all the
$approximate$ GSVD components
$(\widehat\alpha,\widehat\beta,\widehat u,\widehat v,\widehat x)$ and
$(\widetilde \alpha,\widetilde \beta,\widetilde u,\widetilde v,\widetilde x)$
from the computed eigenpairs of the augmented matrix pairs
$(\widehat A,\widehat B)$ and $(\widetilde B,\widetilde A)$,
respectively, i.e., $(\widehat \sigma,\widehat y)$ and
$(\frac{1}{\widetilde \sigma},\widetilde z)$,
which are obtained by  applying {\sf eig}
to \eqref{widehatAB} and \eqref{widetildeBA}, respectively.
The ``$exact$'' GSVD components  $(\alpha,\beta,u,v,x)$ are computed
by applying the Matlab built-in function {\sf gsvd} to $(A,B)$.
\footnote{For the right generalized singular vector matrix $X$ in
\eqref{GSVD}, {\sf gsvd} outputs $R=X^{-T}$ in our notation.
Hence $X$ is recovered by using the Matlab built-in function {\sf inv}
and taken as the transpose of {\sf inv($R$)}.}

We compare solution accuracy of the GSVD components based on
\eqref{widehatAB} and \eqref{widetildeBA}, and mainly justify
three points: (\romannumeral 1) if both $A$ and $B$ are well conditioned,
then both \eqref{widehatAB} and \eqref{widetildeBA} are suitable
for computing the GSVD of $(A,B)$ accurately;
(\romannumeral 2) if $A$ is ill conditioned and $B$ is well
conditioned, then \eqref{widehatAB} is preferable to compute
the GSVD accurately;
(\romannumeral 3) if $A$ is well conditioned and $B$ is ill
conditioned, then \eqref{widetildeBA} is a better formulation
for computing the GSVD accurately.
As mentioned in the beginning of section 1, the GSVDs of the matrix
pairs $(A,B)$ and $(B,A)$ are the same with the generalized singular
values being the reciprocals of each other.
Under the assumption that at least one of $A$ and $B$ is well
conditioned, we can always take one of them to be well conditioned
and the other one well conditioned or ill conditioned.
Therefore, for the sake of certainty in the experiments,
we always take $B$ to be
well conditioned but $A$ to be well or ill conditioned.
In the meantime, we justify Property~\ref{props}.

All the numerical experiments were performed on an Intel (R) Core
(TM) i7-7700 CPU 3.60 GHz with 8 GB RAM, 4 cores
and 8 threads using the Matlab R2017a with the machine precision
$\epsilon_{\rm mach} =2.22\times10^{-16}$ under the Microsoft
Windows 8 64-bit system.

We measure the accuracy of the computed generalized singular
values by their chordal distances from their exact counterparts and
measure the accuracy of the computed generalized singular vectors by
the sines of the angles between them and their exact counterparts.

Each figure in this section consists of four subfigures: the top left
one depicts the accuracy of the computed generalized singular
values $\sigma_i$ such that $\sigma$'s are sorted in descending order;
the top right, bottom left and right ones depict the accuracy of the
computed right and left generalized singular vectors $x_i$ and $u_i$, $v_i$, respectively.

\begin{exper}
We first test three randomly generated problems.
For prescribed constants $c_A\geq 1$ and $c_B\geq1$, we generate
the random sparse $m\times n$ matrix $A$ and $p\times n$ matrix
$B$ by the Matlab commands
\begin{displaymath}
  A={\sf sprand}(m,n,dens,ra)
  \quad\mbox{and}\quad
  B={\sf sprand}(p,n,dens,rb)
\end{displaymath}
with the density $dens=50\%$, and
$ra=[\tfrac{1}{c_A}:\tfrac{1}{n-1}(1-\tfrac{1}{c_A}):1]$
and $rb=[\tfrac{1}{c_B}:\tfrac{1}{n-1}(1-\tfrac{1}{c_B}):1]$.
The largest singular values of such $A$ and $B$ are equal to one,
i.e., $\|A\|=\|B\|=1$,
and their condition numbers are $c_A$ and $c_B$, respectively.
Therefore, by prescribing the values of $c_A$ and $c_B$,
we control the condition numbers $\kappa(A)$ and $\kappa(B)$.
Table \ref{table1} lists the test problems together with their basic
properties. Figures \ref{fig1}-\ref{fig3} display the results.
\end{exper}

\begin{table}[tbhp]
{\small
\caption{Properties of the test problems with $m=1500$,
$p=2000$ and $n=1000$. }\label{table1}
\begin{center}
\begin{tabular}{|c|c|c|c|c|c|c|c|} \hline
{\rm Problem}&$\kappa(A)$&$\kappa(B)$
&$\kappa(\begin{bmatrix}\begin{smallmatrix}A\\B\end{smallmatrix})\end{bmatrix}$
&$\|X\|$ & $\|X^{-1}\|$&$\sigma_{\max}(A,B)$&$\sigma_{\min}(A,B)$\\ \hline
{1a}&$1.0e+2$&$1.0e+2$&$7.03$&$5.31$&$1.32$&$58.1$&$1.57e-2$\\
{1b}&$1.0e+5$&$1.0e+2$&$5.84$&$4.45$&$1.31$&$65.0$&$2.04e-5$\\
{1c}&$1.0e+7$&$1.0e+2$&$9.55$&$7.19$&$1.33$&$65.1$&$1.70e-7$\\ \hline
\end{tabular}
\end{center}
}
\end{table}

\begin{figure}[htbp]
\centering
\begin{minipage}{1\textwidth}
\subfloat[$\mathcal{X}(\widehat\sigma,\sigma)$ and $
\mathcal{X}(\widetilde\sigma,\sigma)$]
{\label{fig1a}\includegraphics[width=0.48\textwidth]{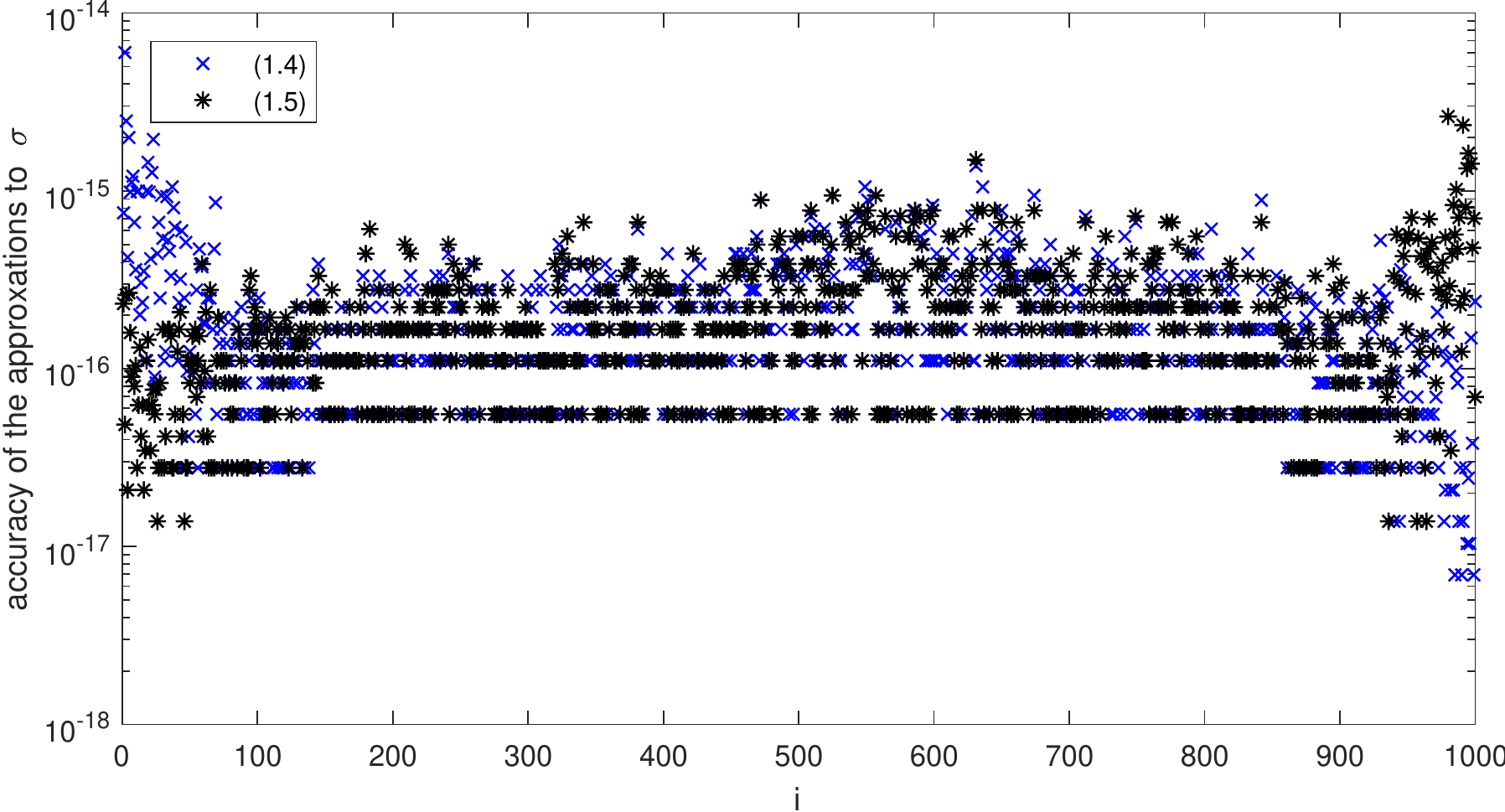}}
\quad
\subfloat[$\sin\angle(\widehat x,x)$ and
$\sin\angle(\widetilde x,x)$]
{\label{fig1b}\includegraphics[width=0.48\textwidth]{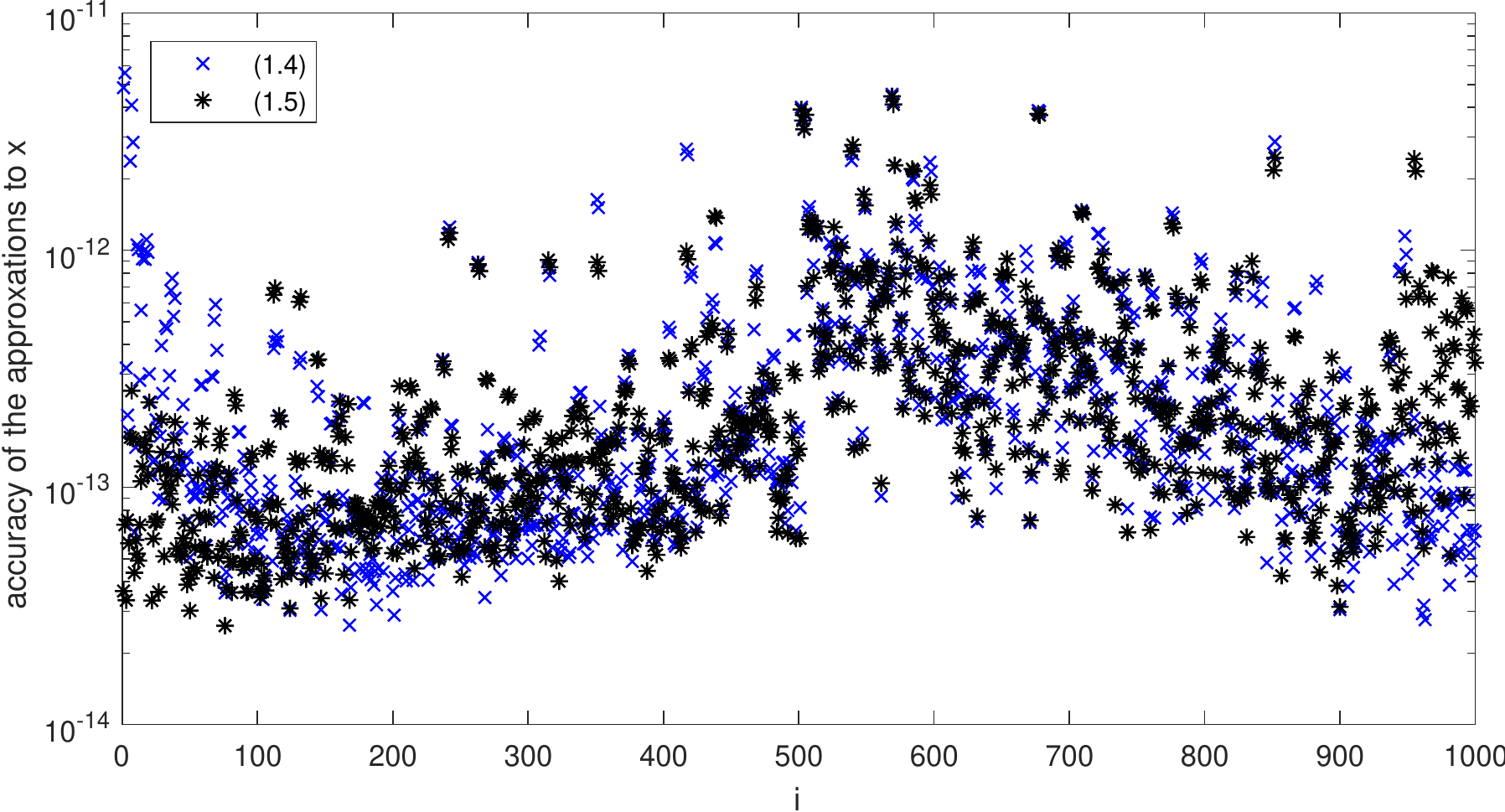}}\\
\subfloat[$\sin\angle(\widehat u,u)$ and
$\sin\angle(\widetilde u,u)$]
{\label{fig1c}\includegraphics[width=0.48\textwidth]{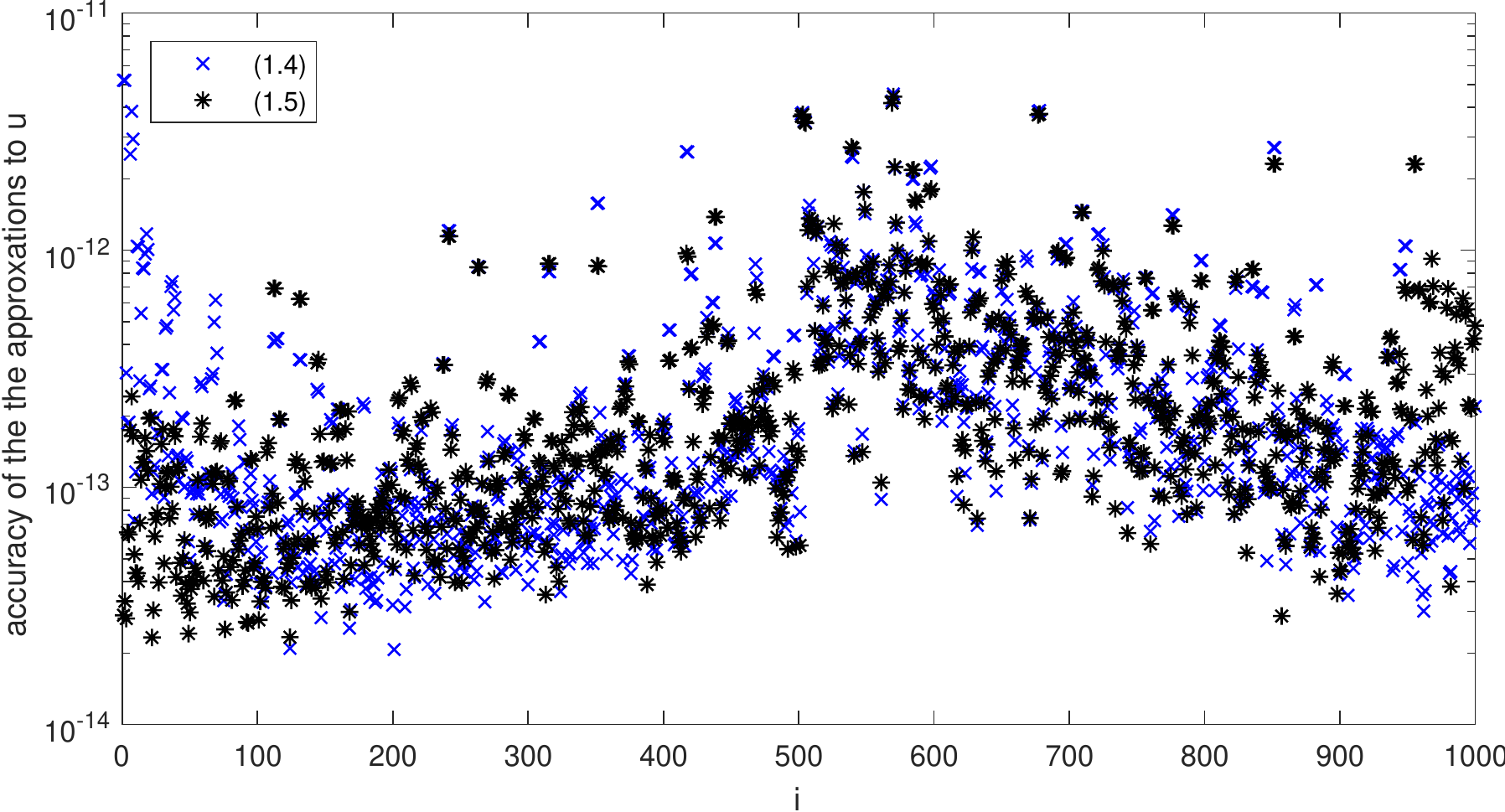}}
\quad
\subfloat[$\sin\angle(\widehat v,v)$ and
$\sin\angle(\widetilde v,v)$]
{\label{fig1d}\includegraphics[width=0.48\textwidth]{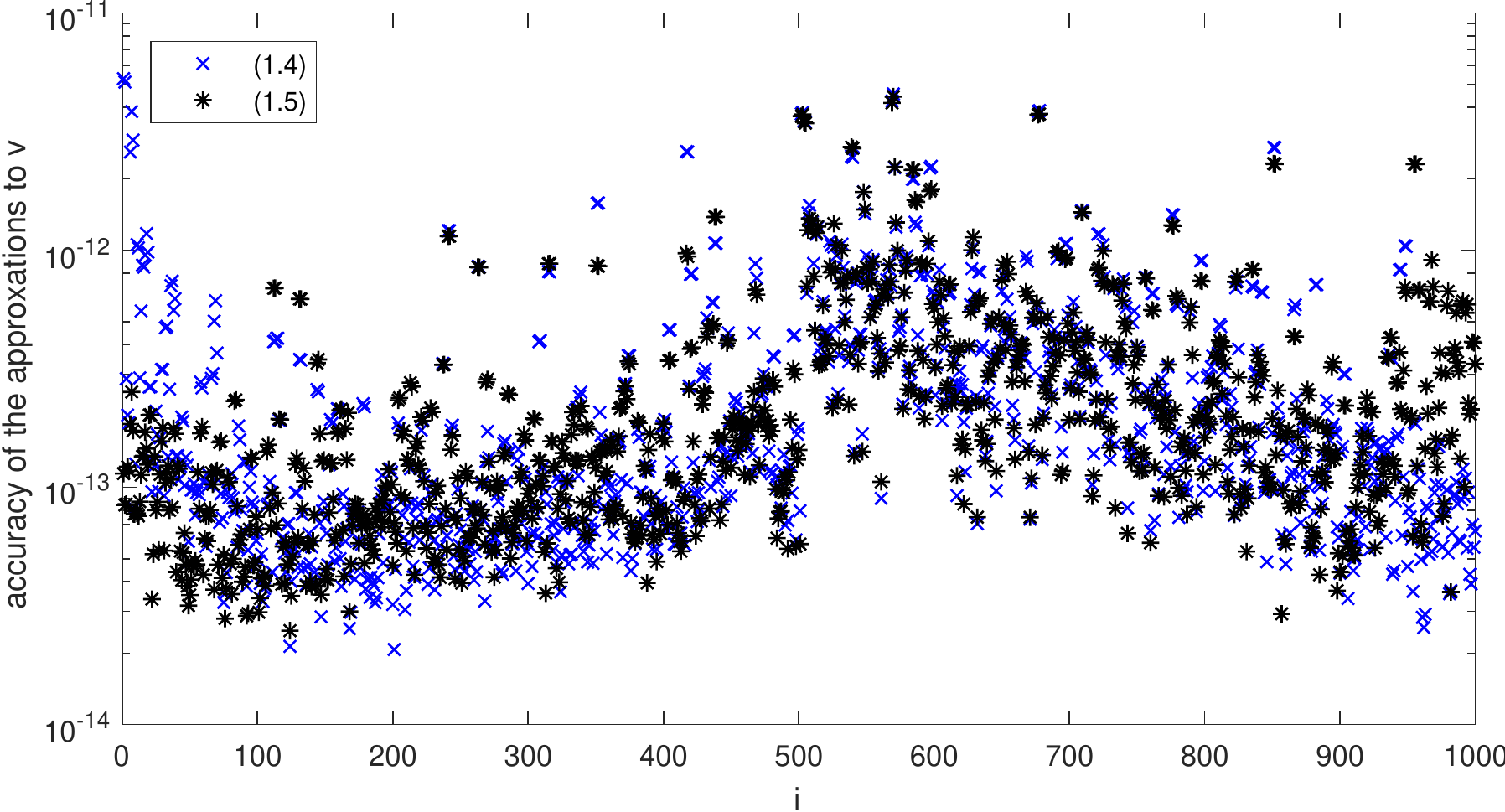}}
\caption{Accuracy of the GSVD components of problem 1a.}\label{fig1}
\end{minipage}
\end{figure}

\begin{figure}[htbp]
\centering
\begin{minipage}{1\textwidth}
\subfloat[$\mathcal{X}(\widehat\sigma,\sigma)$ and
$\mathcal{X}(\widetilde\sigma,\sigma)$]
{\label{fig2a}\includegraphics[width=0.49\textwidth]{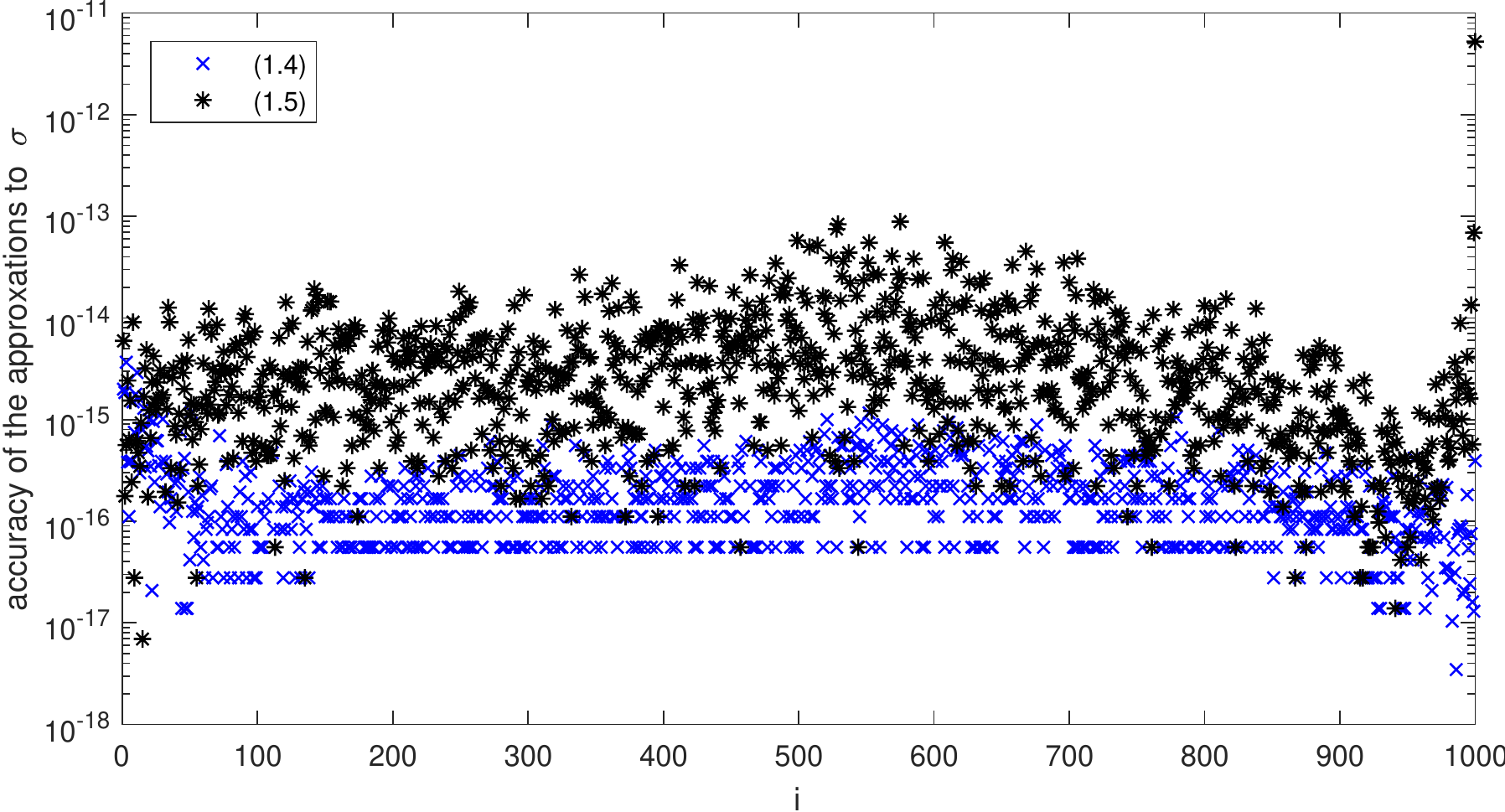}}
\ \ \ \
\subfloat[$\sin\angle(\widehat x,x)$ and
$\sin\angle(\widetilde x,x)$]
{\label{fig2b}\includegraphics[width=0.49\textwidth]{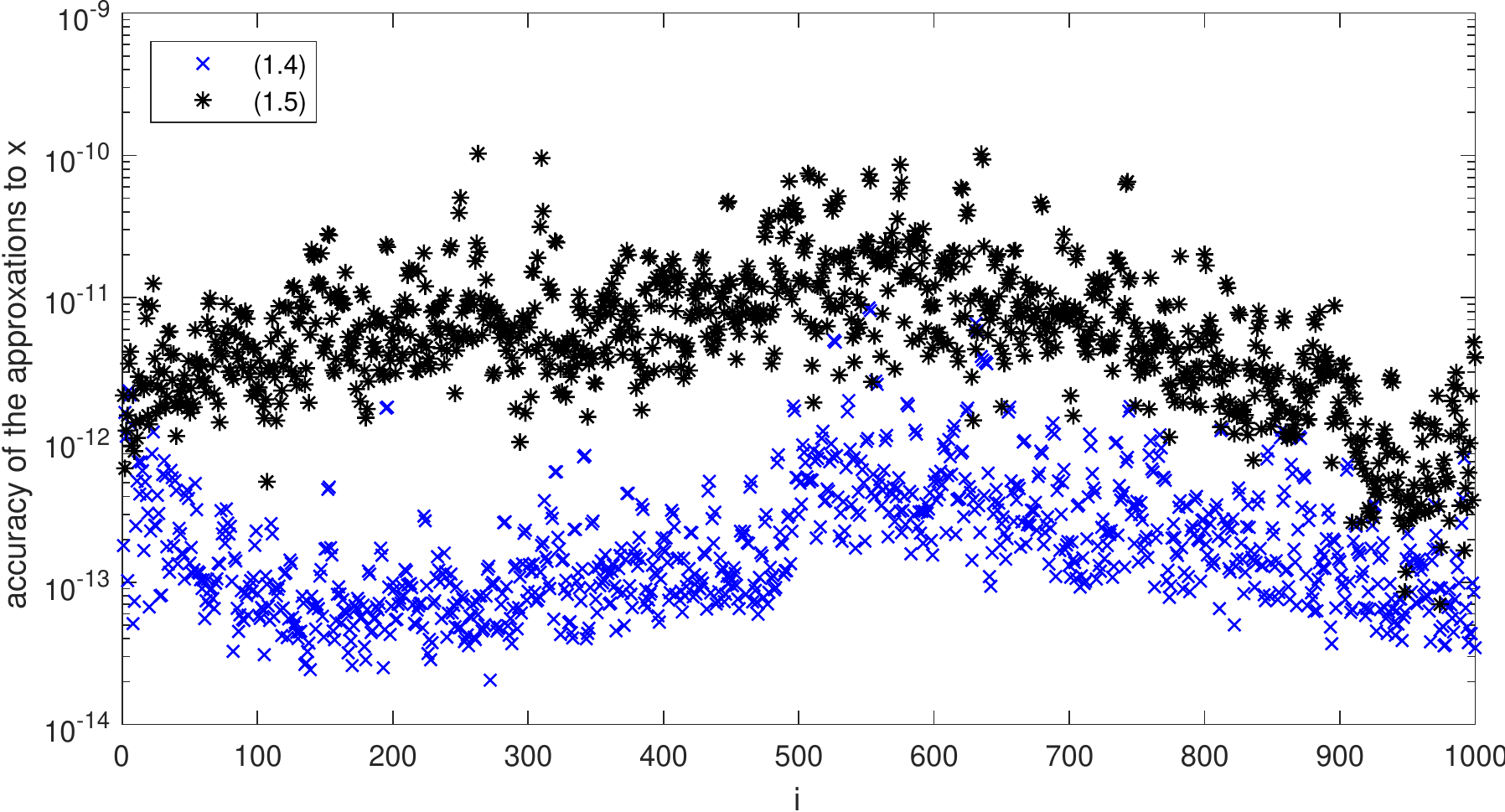}}\\
\subfloat[$\sin\angle(\widehat u,u)$ and
$\sin\angle(\widetilde u,u)$]
{\label{fig2c}\includegraphics[width=0.49\textwidth]{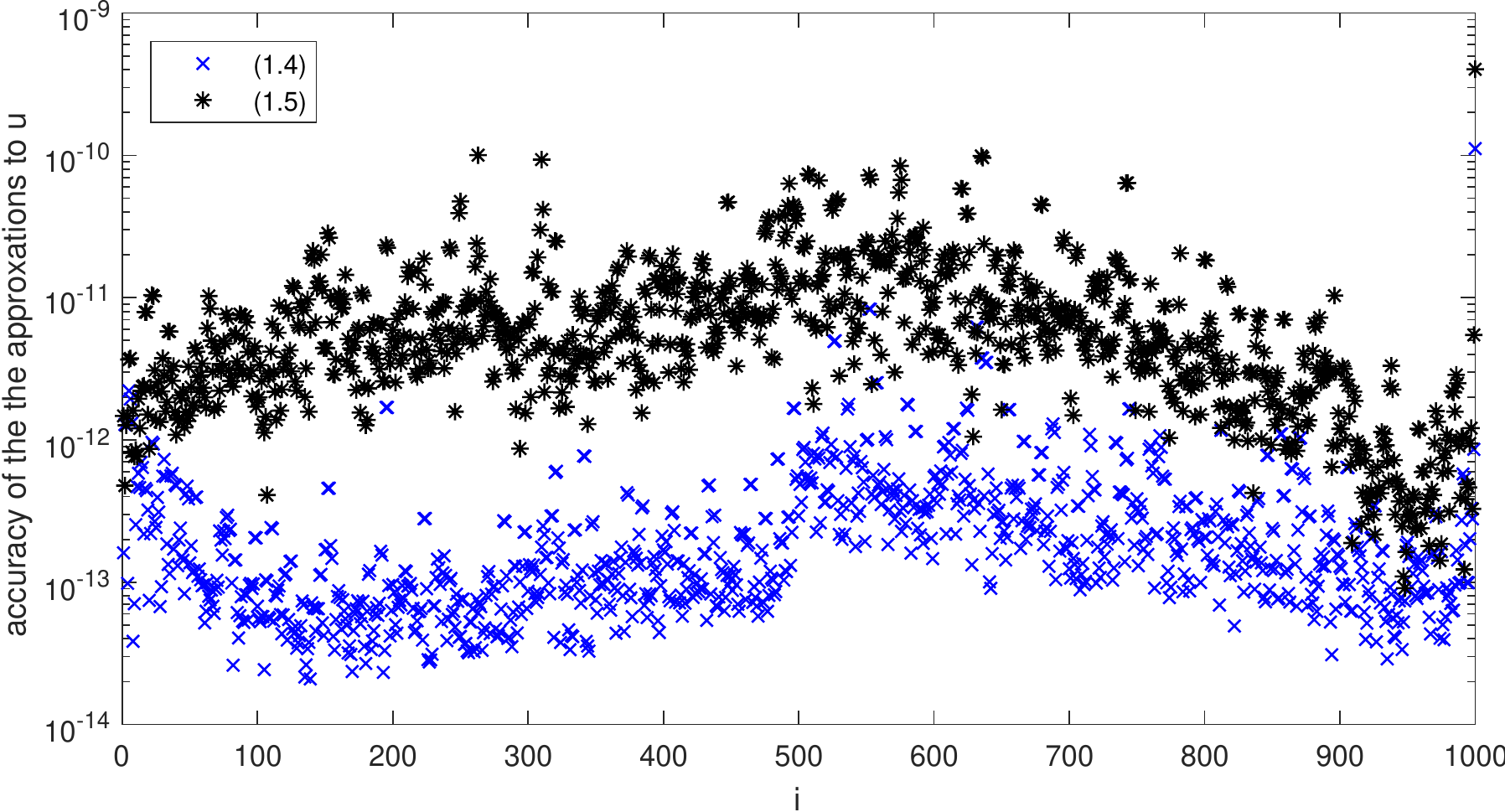}}
\ \ \ \
\subfloat[$\sin\angle(\widehat v,v)$ and
$\sin\angle(\widetilde v,v)$]
{\label{fig2d}\includegraphics[width=0.49\textwidth]{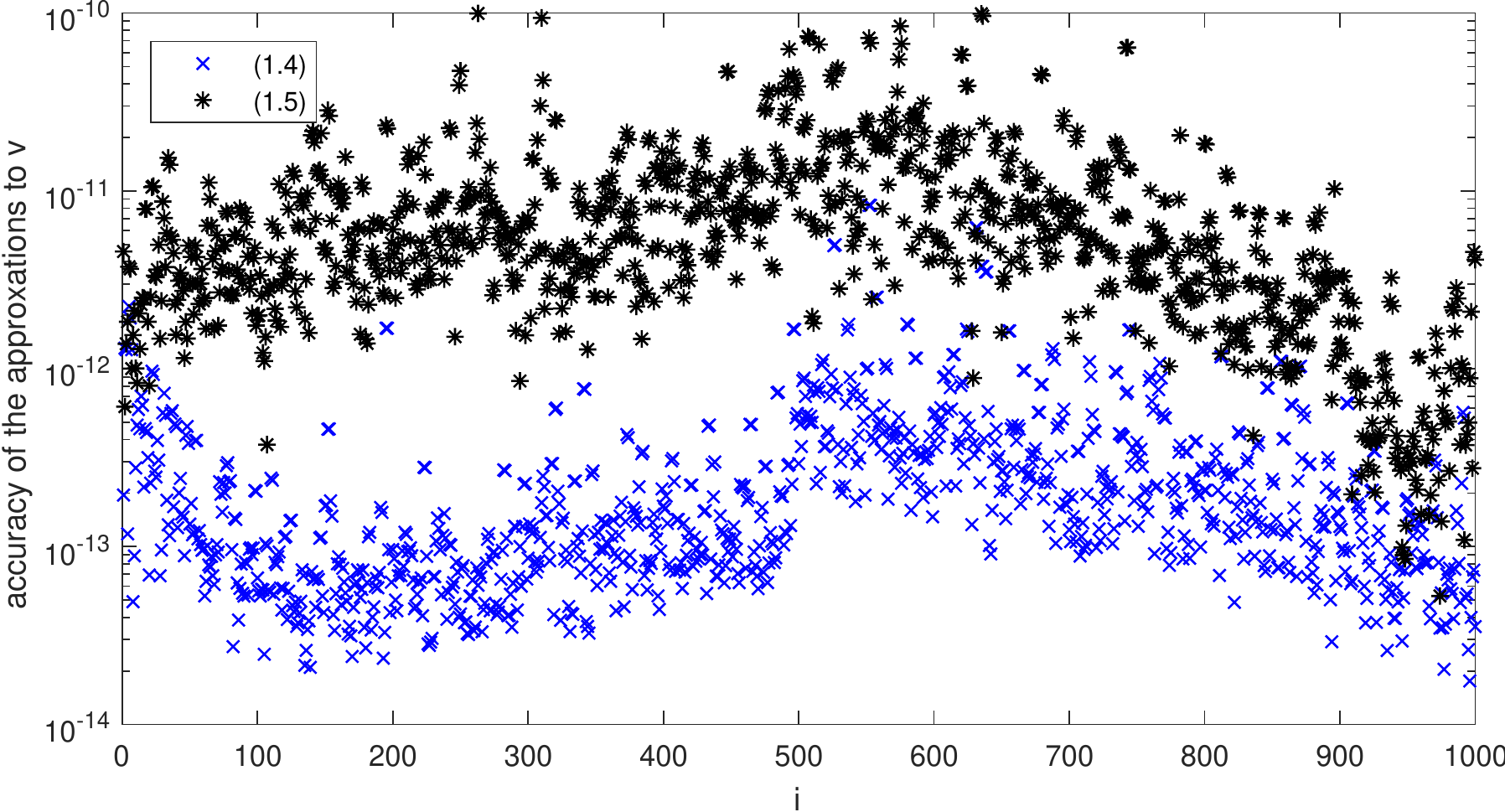}}
\caption{Accuracy of the GSVD components of problem 1b.}\label{fig2}
\vspace{0.5cm}
\end{minipage}
\begin{minipage}{1\textwidth}
\subfloat[$\mathcal{X}(\widehat\sigma,\sigma)$ and
$\mathcal{X}(\widetilde\sigma,\sigma)$]
{\label{fig3a}\includegraphics[width=0.49\textwidth]{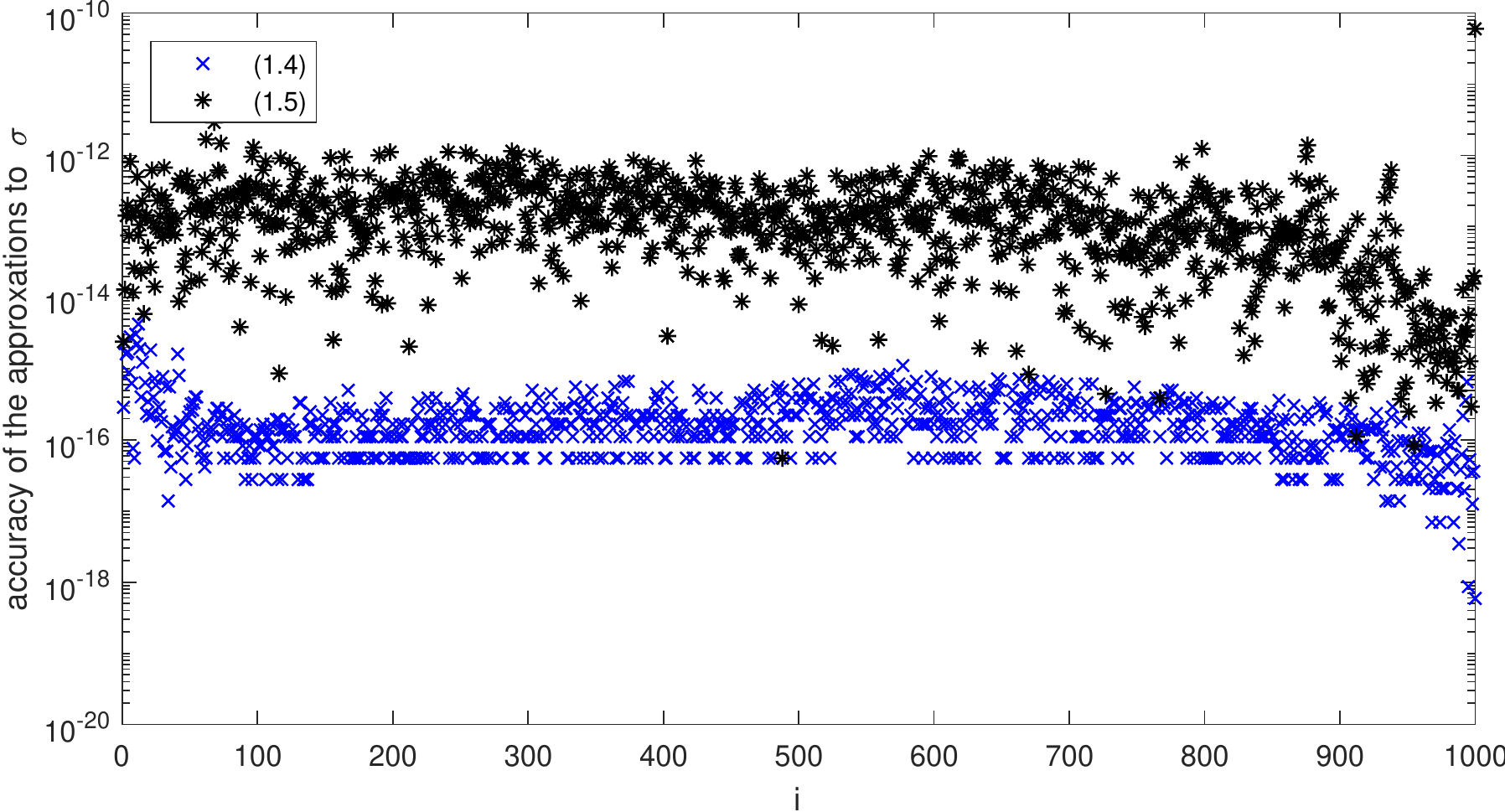}}
\ \ \ \
\subfloat[$\sin\angle(\widehat x,x)$ and
$\sin\angle(\widetilde x,x)$]
{\label{fig3b}\includegraphics[width=0.49\textwidth]{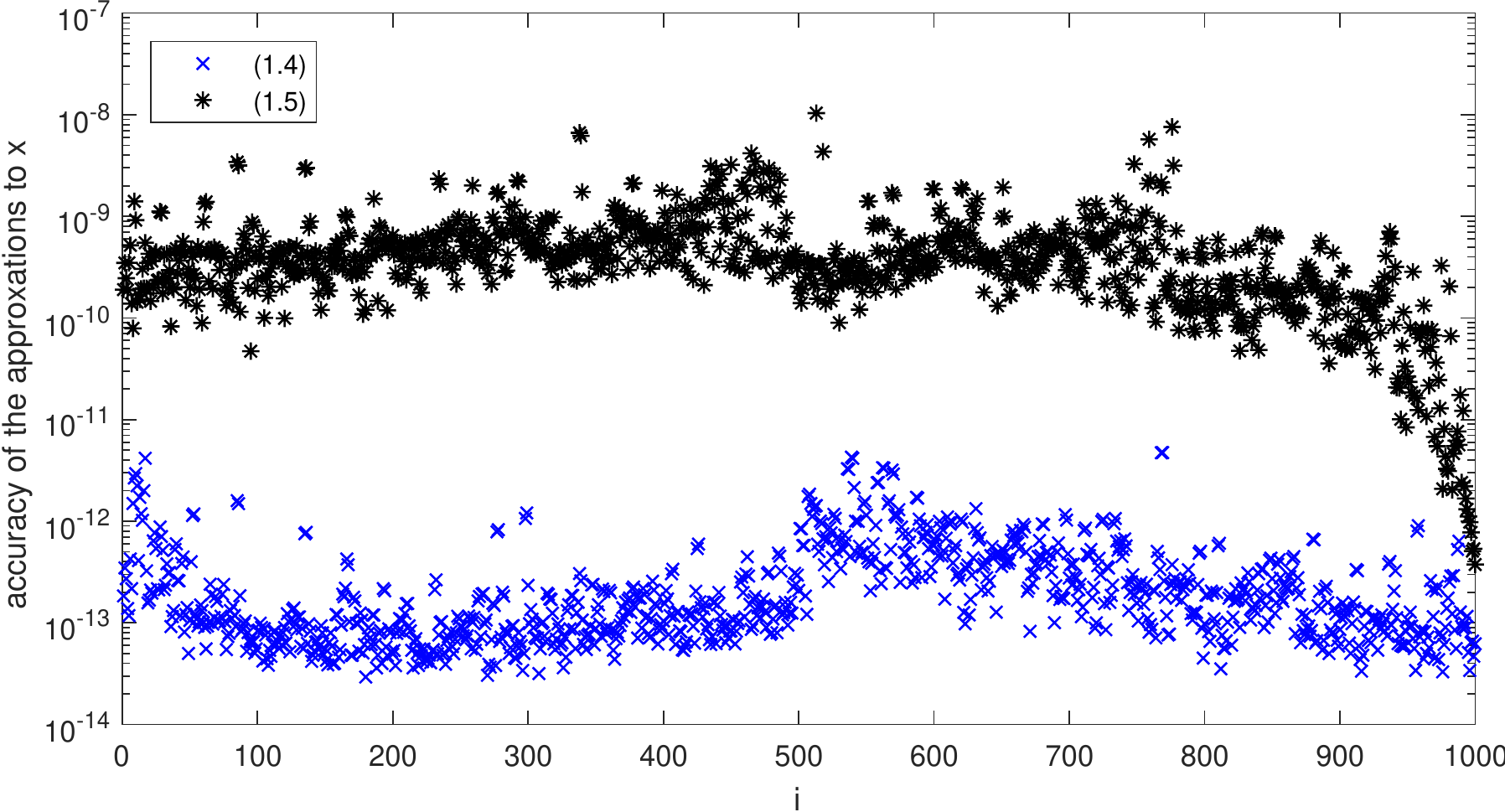}}\\
\subfloat[$\sin\angle(\widehat u,u)$ and
$\sin\angle(\widetilde u,u)$]
{\label{fig3c}\includegraphics[width=0.49\textwidth]{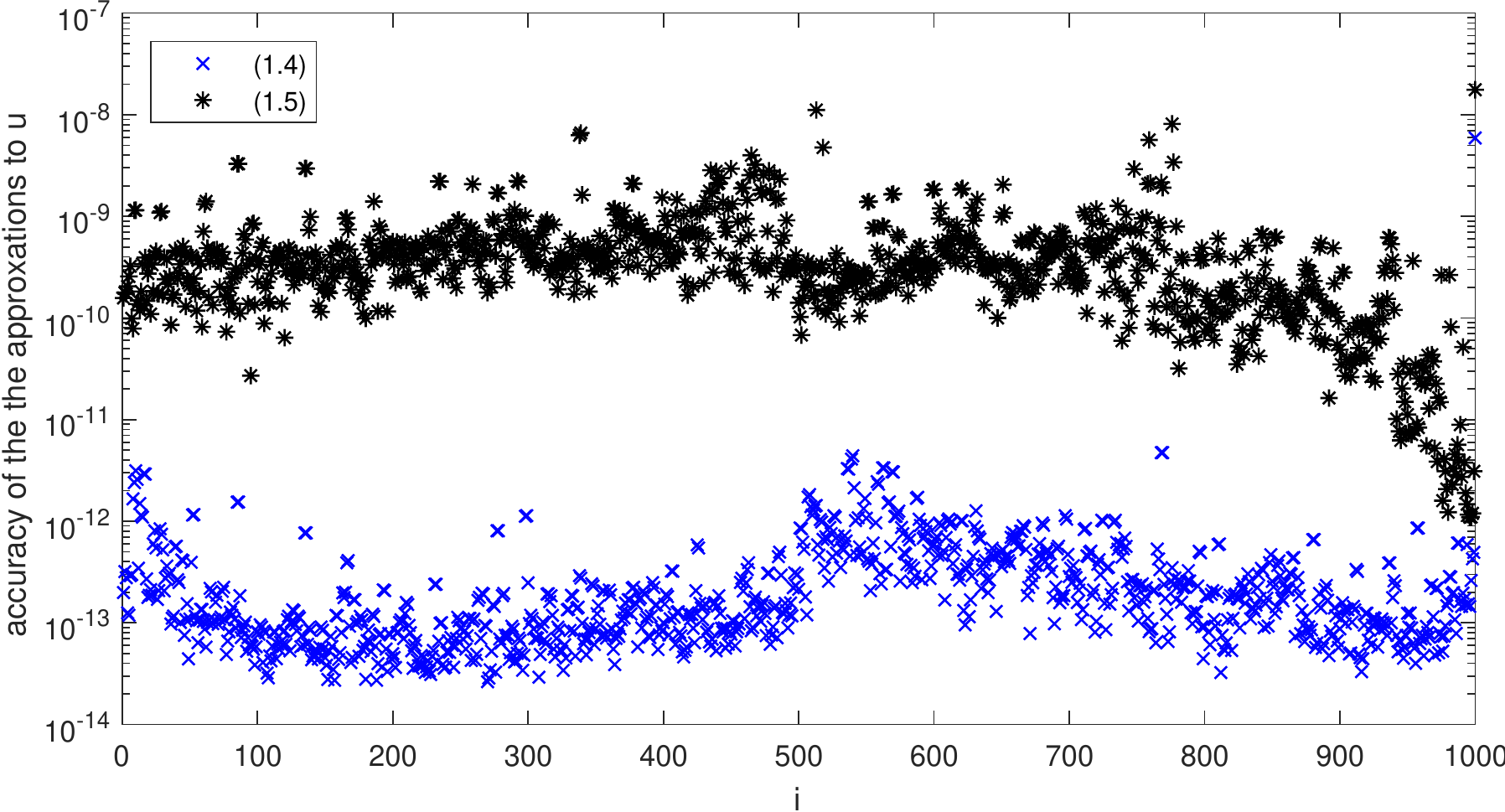}}
\ \ \ \
\subfloat[$\sin\angle(\widehat v,v)$ and
$\sin\angle(\widetilde v,v)$]
{\label{fig3d}\includegraphics[width=0.49\textwidth]{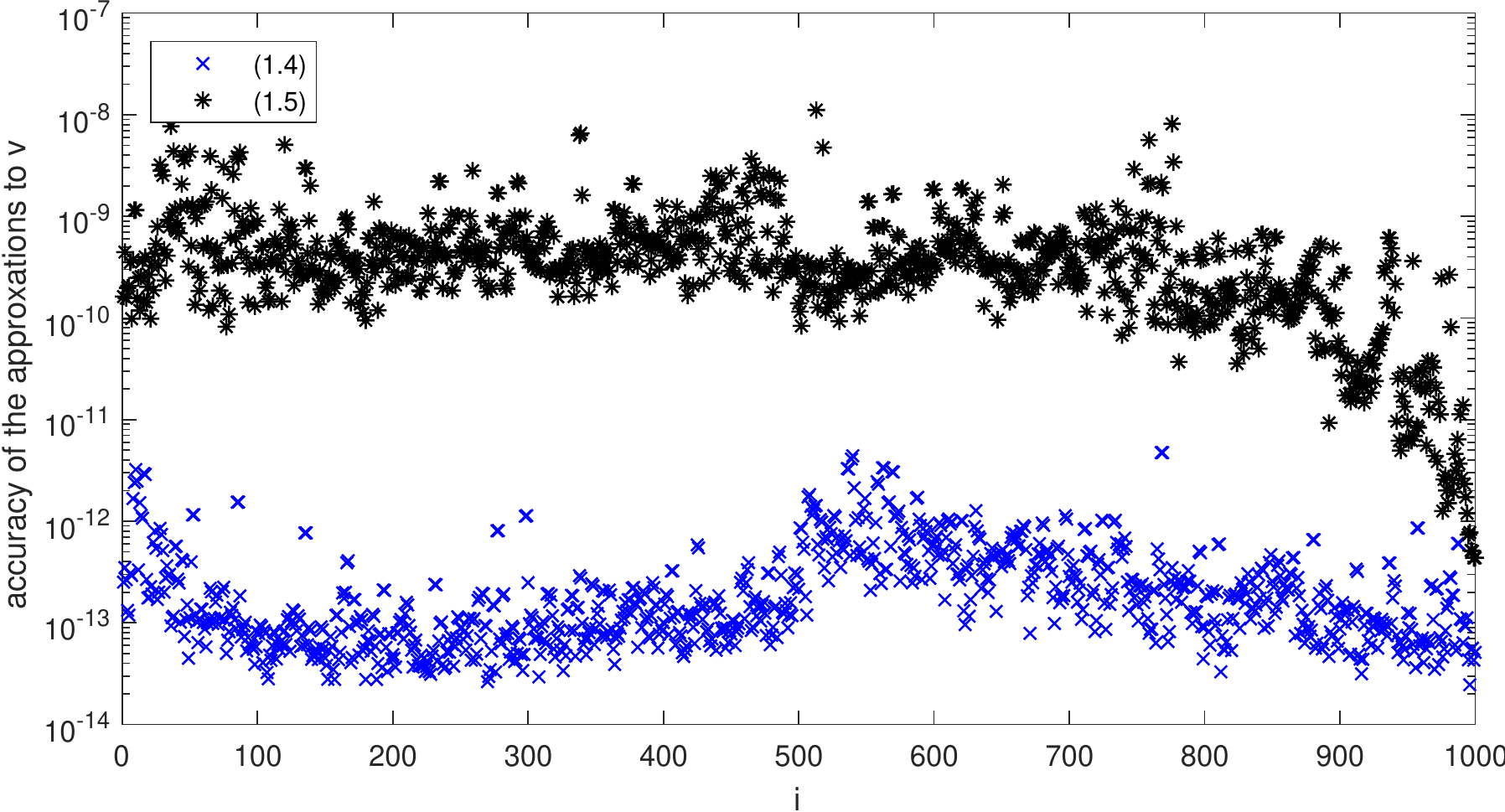}}
\caption{Accuracy of the GSVD components of problem 1c.}\label{fig3}
\end{minipage}
\end{figure}

From Table~\ref{table1} we see that
$\kappa(A)=\|A^{\dagger}\|\approx \frac{1}{\sigma_{\min}(A,B)}$ and
$\kappa(B)=\|B^{\dagger}\|\approx \sigma_{\max}(A,B)$,
confirming Property~\ref{props}
and the third conclusion in the near end of Section 2.
We notice that as long as at least one of $A$ and $B$ is well
conditioned, so is the stacked matrix
$\begin{bmatrix}\begin{smallmatrix}A\\B\end{smallmatrix}\end{bmatrix}$.

For problem 1a, both $A$ and $B$ are well conditioned.
Figure \ref{fig1} illustrates that both \eqref{widehatAB} and
\eqref{widetildeBA} yield equally accurate GSVD components of $(A,B)$.
Apparently, there is no winner between \eqref{widehatAB} and
\eqref{widetildeBA} for this problem.

For problem 1b, $A$ is moderately ill conditioned and $B$ is well conditioned.
As is observed from Figure \ref{fig2a}, the computed generalized
singular values based on \eqref{widehatAB} are generally more
accurate than those based on \eqref{widetildeBA}, or at least as
comparably accurate as the latter ones.
Figures \ref{fig2b}-\ref{fig2d} show that for most
of the generalized singular vectors,
\eqref{widehatAB} yields significantly more accurate approximations
than \eqref{widetildeBA} does.
Therefore, \eqref{widehatAB} outperforms \eqref{widetildeBA} for this problem.

For problem 1c where $A$ is quite ill conditioned and $B$ is well
conditioned, the advantage of \eqref{widehatAB} over
\eqref{widetildeBA} is very obvious.
As is visually illustrated by Figure \ref{fig3},
for all the generalized singular components,
\eqref{widehatAB} yields more or even much
more accurate approximations than \eqref{widetildeBA},
and the accuracy is improved by several orders.
For this problem, \eqref{widehatAB} definitely wins.

For these three problems, we have observed that for both $A$ and
$B$ well conditioned, two formulations \eqref{widehatAB} and
\eqref{widetildeBA} based backward stable algorithms deliver equally
accurate approximations to the GSVD components of $(A,B)$.
For the problems where $A$ is ill conditioned and $B$ is well
conditioned, \eqref{widehatAB} can produce more and even
much more accurate GSVD components than \eqref{widetildeBA}.
Moreover, with $B$ being well conditioned, the worse conditioned
$A$ is, the more advantageous \eqref{widehatAB} is over \eqref{widetildeBA}.
As is also observed from Figures \ref{fig1}-\ref{fig3}, a suitable
choice between \eqref{widehatAB} and \eqref{widetildeBA} can
always guarantee that under the chordal measure all the
generalized singular values $\sigma$ can be computed
with full accuracy, i.e., the level of $\epsilon_{\rm mach}$,
which confirms Theorem~\ref{thm:1} and the analysis followed in Section 2.1.

\begin{exper}\label{exam2}
We test several realistic problems.
For each problem, the matrices $A$ and $B$ are normalized from
$A_0$ and $B_0$, respectively, i.e., $A=\frac{A_0}{\|A_0\|}$ and
$B=\frac{B_0}{\|B_0\|}$, where $A_0\in\mathbb{R}^{n\times n}$
is a square matrix from the SuiteSparse Matrix Collection \cite{davis2011university}
and
\begin{displaymath}
B_0=\begin{bmatrix}1&&\\-1&\ddots&\\&\ddots&1\\&&-1
\end{bmatrix}\in\mathbb{R}^{{(n+1)}\times n}
\end{displaymath}
is the transpose of the $n\times (n+1)$ first order derivative
operator in dimension one~\cite{hansen1998rank}.
Table~\ref{table2} lists the test problems together with some
of their basic properties, where the names inside the brackets
are those of the initial matrices $A_0$,  in which ``delan12'' and
``viscopl1'' are abbreviations for ``delaunay\_n12'' and ``viscoplastic1'',
respectively.
\end{exper}

\begin{table}[tbhp]
{\small\caption{Properties of the test problems with
$m=n$ and $p=n+1$.}\label{table2}
\begin{center}\begin{tabular}{|p{2.10cm}|c|c|c|c|c|c|} \hline
\centering{Problem} &$n$ &$\kappa(A)$ &$\kappa(B)$
&$\kappa(\begin{bmatrix}\begin{smallmatrix}A\\B
\end{smallmatrix}\end{bmatrix})$
&$\sigma_{\max}(A,B)$&$\sigma_{\min}(A,B)$\\ \hline
{2a (3elt)  }
&$4720$&$2.8e+3$&$3.0e+3$&$6.35$   &$2.89e+3$&$5.00e-4$   \\
{2b (\rm delan12)}
&$4096$&$5.2e+3$&$2.6e+3$&$5.15$ &$2.35e+3$&$2.65e-4$   \\
{2c (viscopl1)}
&$4326$&$1.4e+5$&$2.8e+3$&$468$&$1.53e+3$&$9.34e-6$   \\
{2d (cavity16)}
&$4562$&$9.4e+6$&$2.9e+3$&$75.6$   &$1.23e+2$&$1.51e-7$   \\
{2e (gemat11)}
&$4929$&$6.0e+7$&$3.1e+3$&$512$&$23.8$   &$2.65e-8$   \\
{2f (bcsstk16)}&$4884$ &$4.9e+9$&$3.1e+3$&$78.7$&$73.5$&$2.86e-10$\\
\hline
\end{tabular}\end{center}}\end{table}

We observe from Table~\ref{table2} that
$\kappa(A)\approx \frac{1}{\sigma_{\min}(A,B)}$ well and
$\kappa(B)\approx \sigma_{\max}(A,B)$ roughly,
justifying Property~\ref{props} and
the third conclusion in the near end of Section 2.

\begin{table}[tbhp]
{\small\caption{A comparison of \eqref{widehatAB} and
\eqref{widetildeBA} for computing the GSVDs
of test problems 2a-2f.}\label{table3}
\begin{center}\begin{tabular}{|c|c|c|c|c|c|c|c|c|} \hline
\multirow{2}{1.2cm}{Problem}
&\multicolumn{2}{c|}{better $\sigma$}
&\multicolumn{2}{c|}{better $x$}
&\multicolumn{2}{c|}{better $u$}
&\multicolumn{2}{c|}{better $v$} \\ \cline{2-9}
&$pct$($\%$)&$acc$ &$pct$($\%$)&$acc$&$pct$($\%$)&$acc$&$pct$($\%$)&$acc$\\ \hline
{2a}&$43.37$&$-0.24$&$35.68$&$-0.12$&$35.93$&$-0.12$&$35.91$&$-0.12$\\
{2b}&$47.22$&$-0.11$&$36.45$&$-0.07$&$36.62$&$-0.07$&$36.57$&$-0.07$\\
{2c}&$83.93$&$+0.89$&$83.38$&$+0.67$&$84.51$&$+0.71$&$84.26$&$+0.71$\\
{2d}&$85.60$&$+2.26$&$79.35$&$+2.05$&$79.37$&$+2.04$&$79.35$&$+2.04$\\
{2e}&$86.61$&$+1.00$&$97.28$&$+1.12$&$95.94$&$+1.05$&$95.94$&$+1.05$\\
{2f}&$99.20$&$+6.60$&$99.20$&$+6.33$&$99.20$&$+6.34$&$99.20$&$+6.34$\\
\hline
\end{tabular}\end{center}}\end{table}

\begin{figure}[tbhp]
\centering
\begin{minipage}{1\textwidth}
 \subfloat[$\mathcal{X}(\widehat\sigma,\sigma)$ and $
\mathcal{X}(\widetilde\sigma,\sigma)$]
{\label{fig4a}\includegraphics[width=0.49\textwidth]{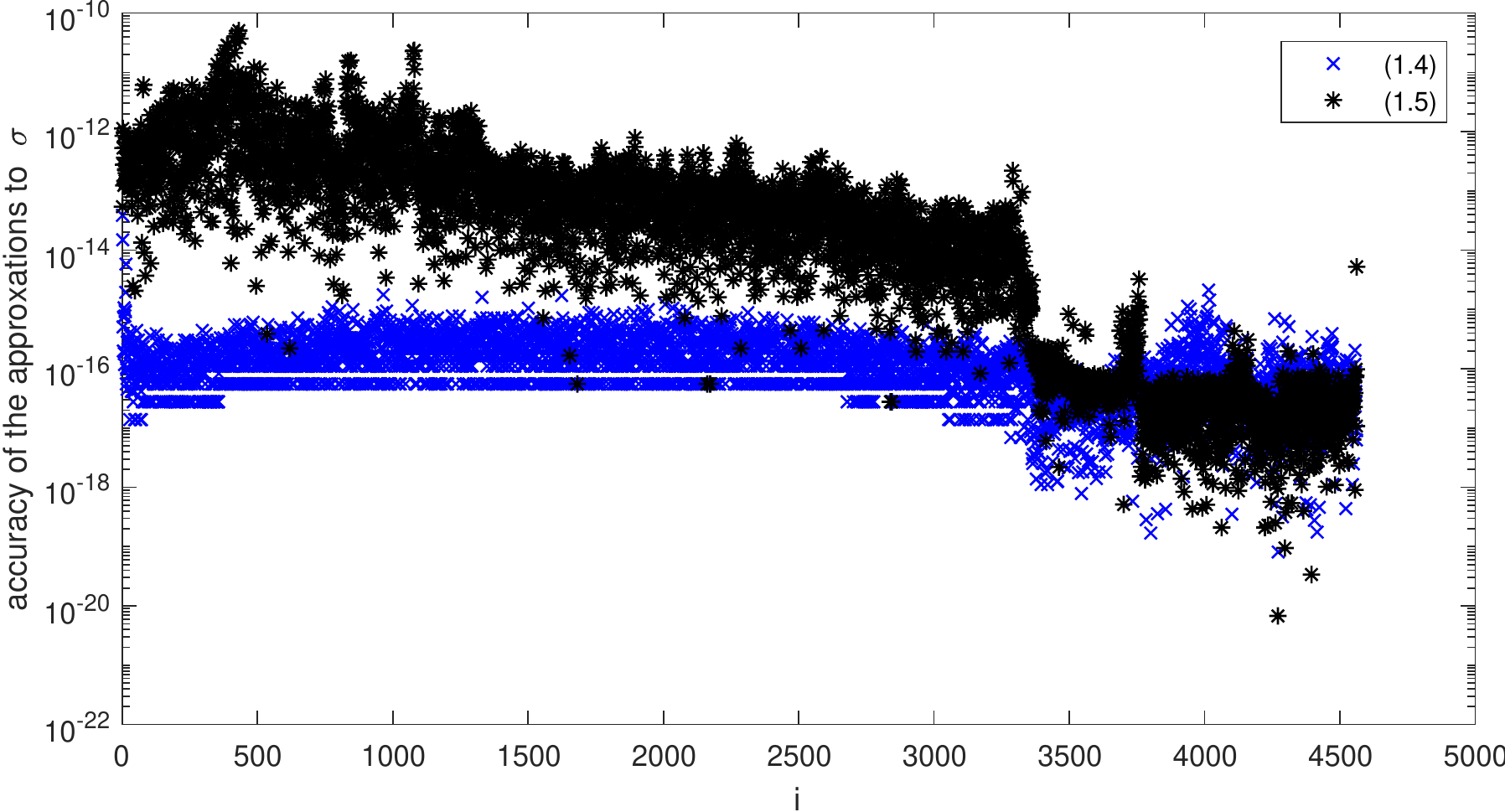}}
\ \ \ \
\subfloat[$\sin\angle(\widehat x,x)$ and
$\sin\angle(\widetilde x,x)$ ]
{\label{fig4b}\includegraphics[width=0.49\textwidth]{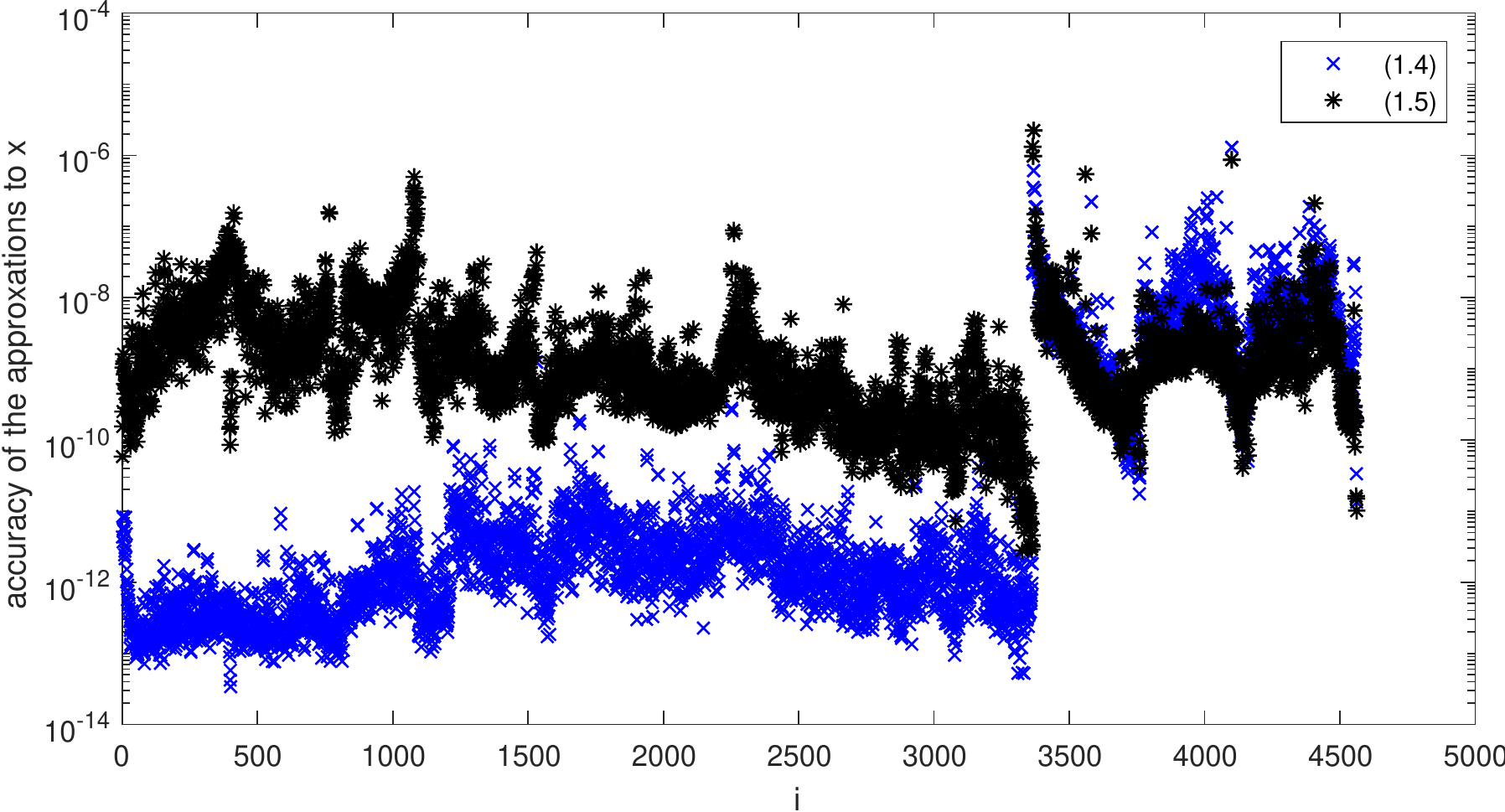}}\\
\subfloat[$\sin\angle(\widehat u,u)$ and
$\sin\angle(\widetilde u,u)$ ]
{\label{fig4c}\includegraphics[width=0.49\textwidth]{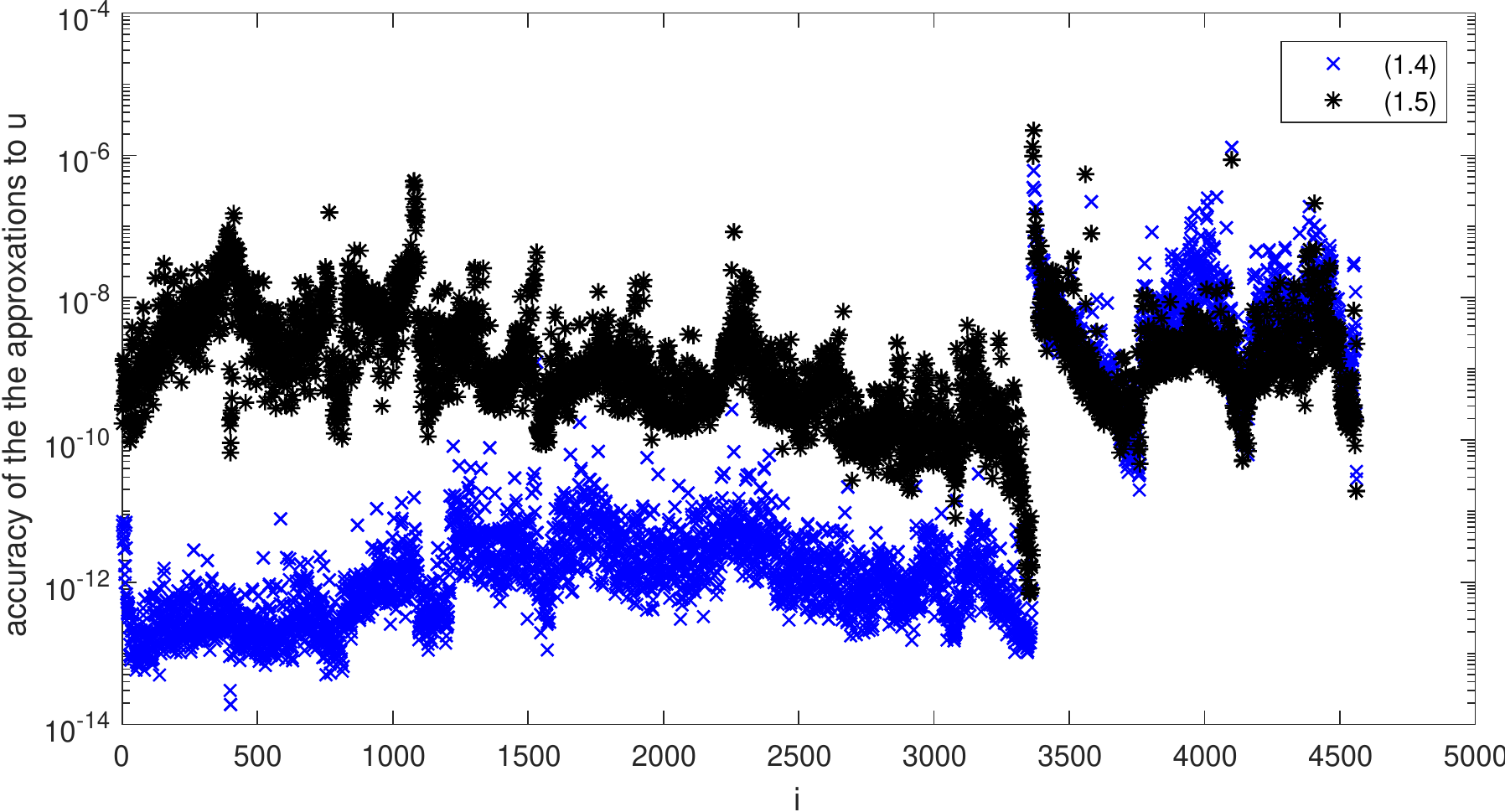}}
\ \ \ \
\subfloat[$\sin\angle(\widehat v,v)$ and
$\sin\angle(\widetilde v,v)$  ]
{\label{fig4d}\includegraphics[width=0.49\textwidth]{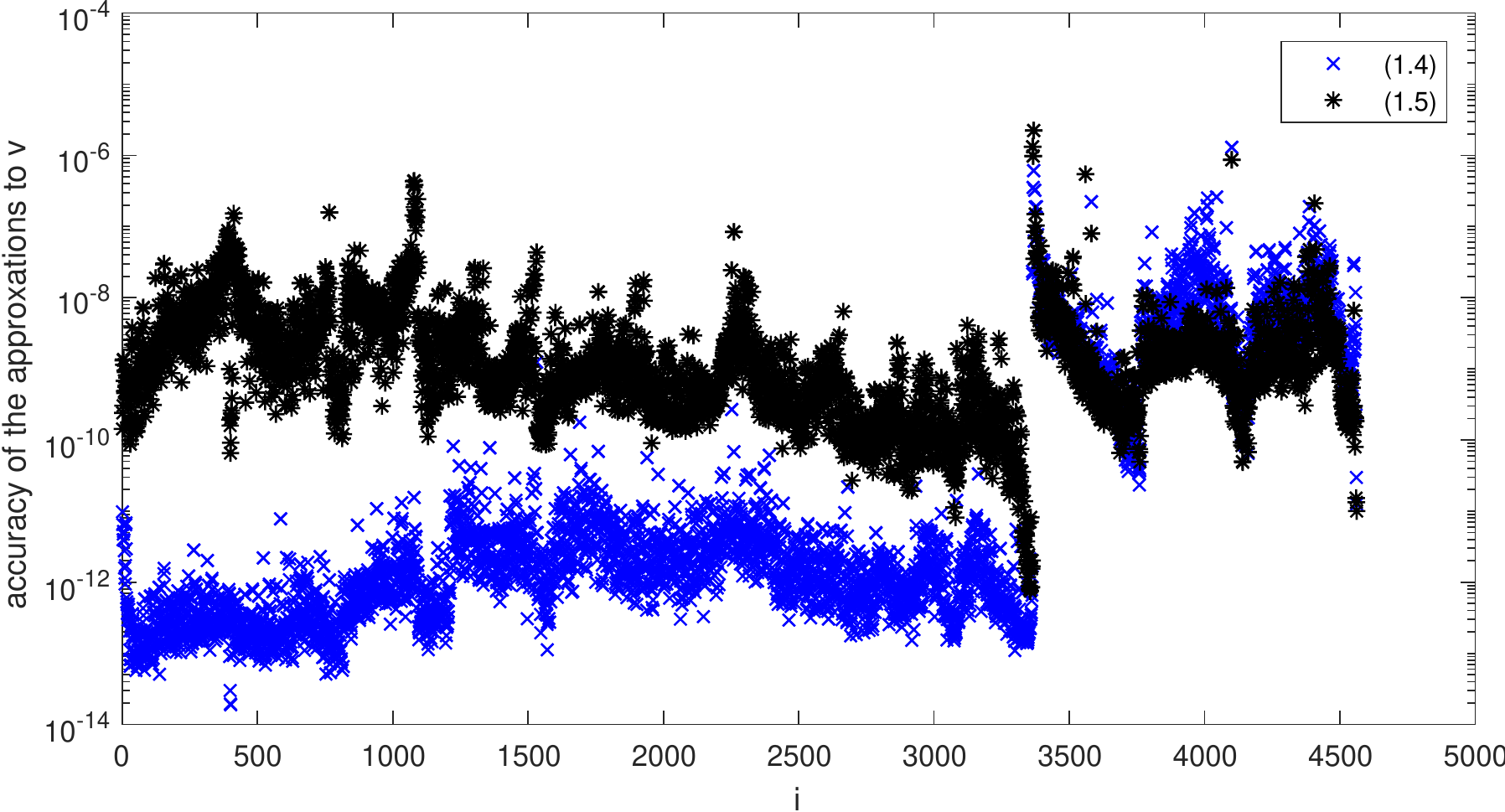}}
\caption{Accuracy of the GSVD components of problem 2d.}\label{fig4}
\vspace{0.5cm}
\end{minipage}
\begin{minipage}{1\textwidth}
\subfloat[$\mathcal{X}(\widehat\sigma,\sigma)$ and $
\mathcal{X}(\widetilde\sigma,\sigma)$]
{\label{fig5a}\includegraphics[width=0.49\textwidth]{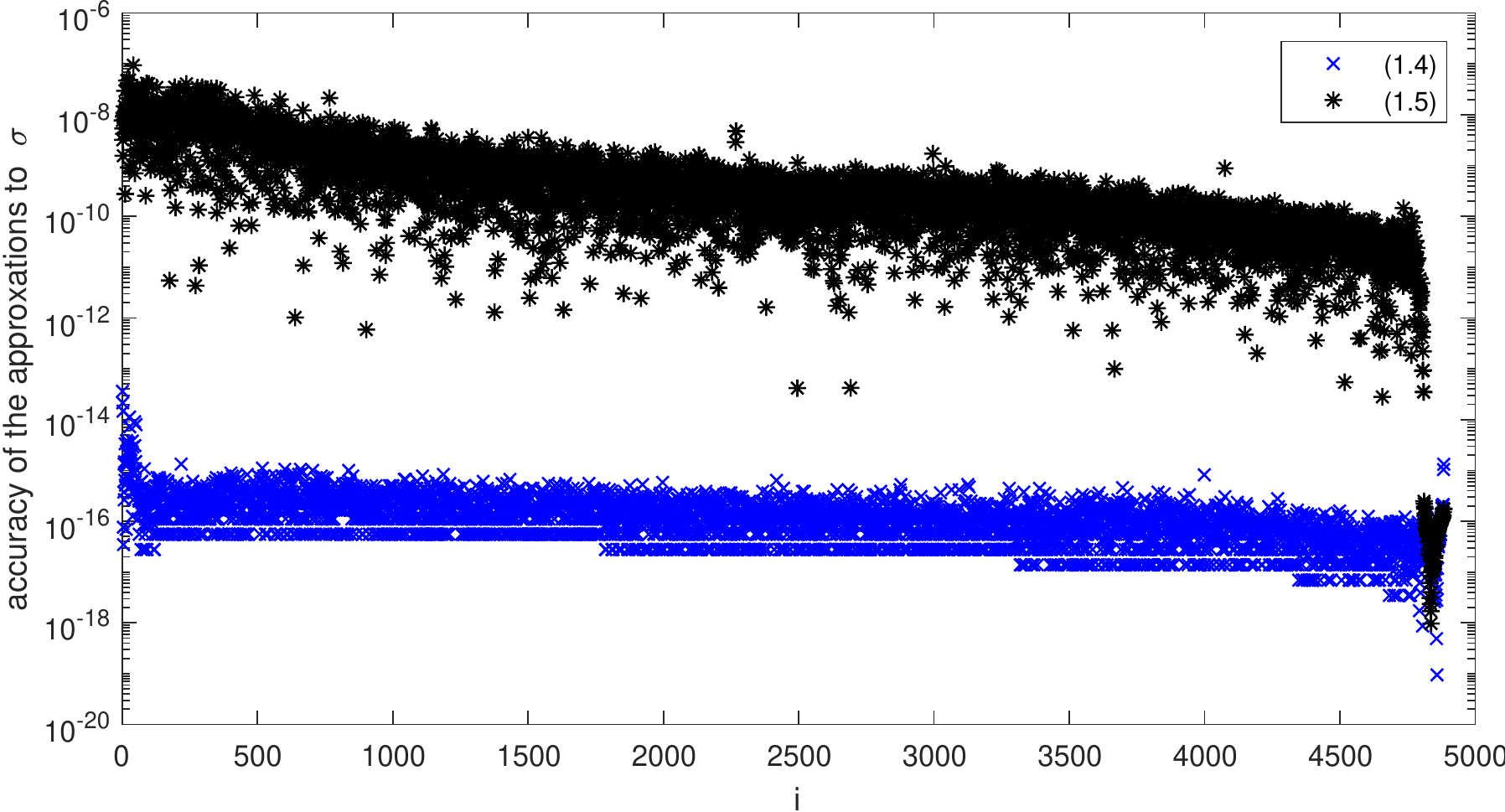}}
\ \ \ \
\subfloat[$\sin\angle(\widehat x,x)$ and
$\sin\angle(\widetilde x,x)$ ]
{\label{fig5b}\includegraphics[width=0.49\textwidth]{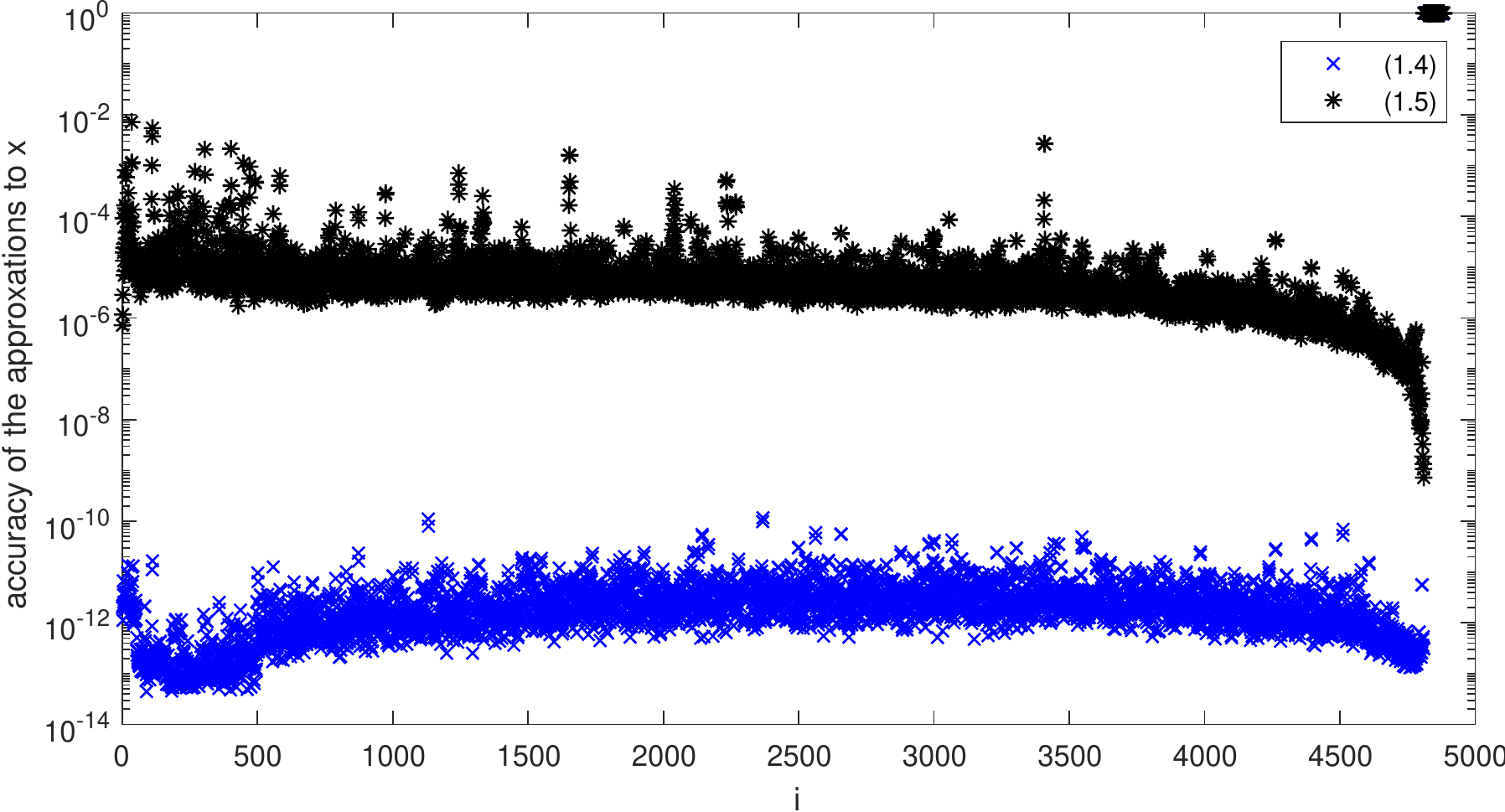}}\\
\subfloat[$\sin\angle(\widehat u,u)$ and
$\sin\angle(\widetilde u,u)$ ]
{\label{fig5c}\includegraphics[width=0.49\textwidth]{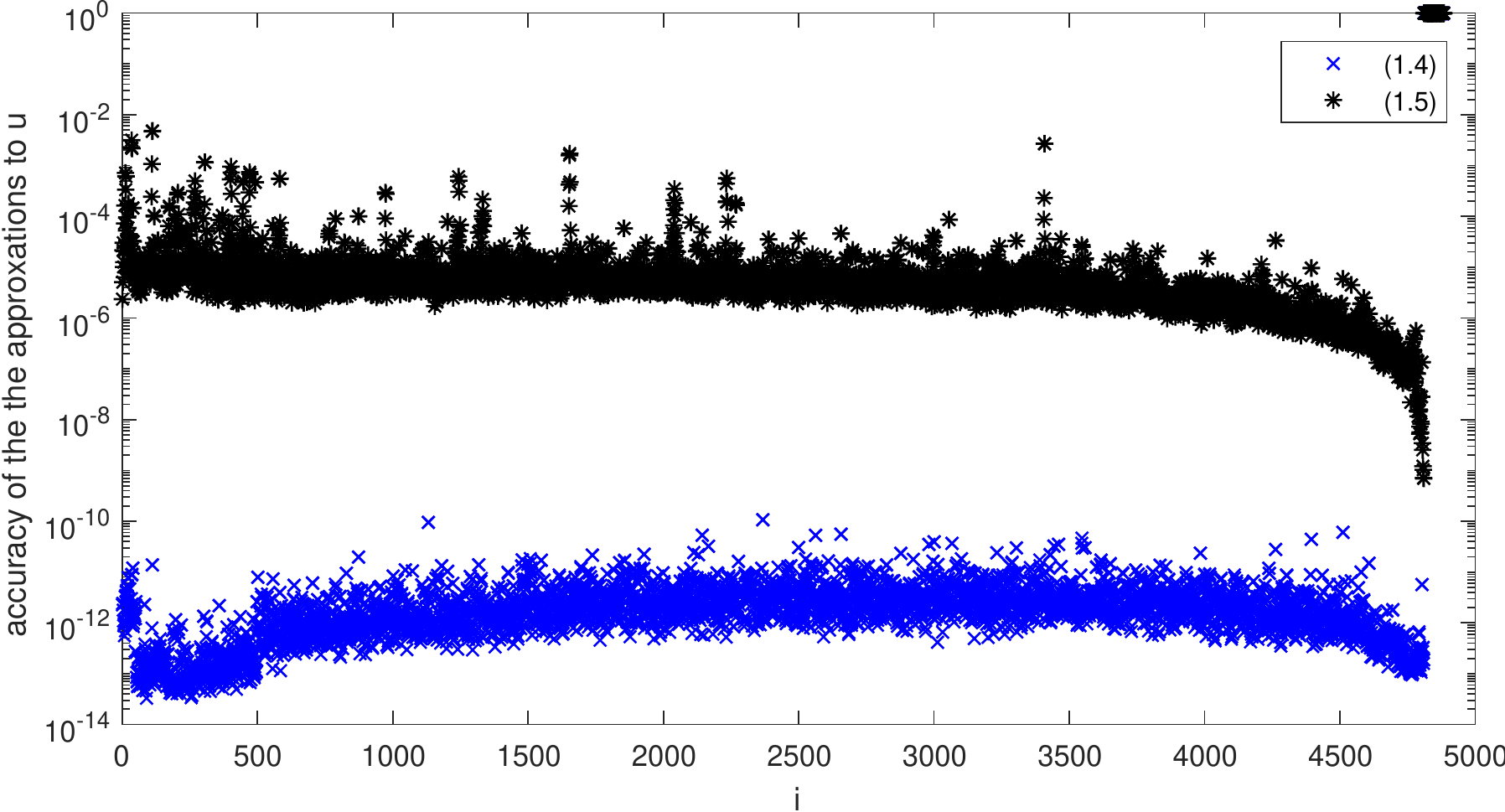}}
\ \ \ \
\subfloat[$\sin\angle(\widehat v,v)$ and
$\sin\angle(\widetilde v,v)$  ]
{\label{fig5d}\includegraphics[width=0.49\textwidth]{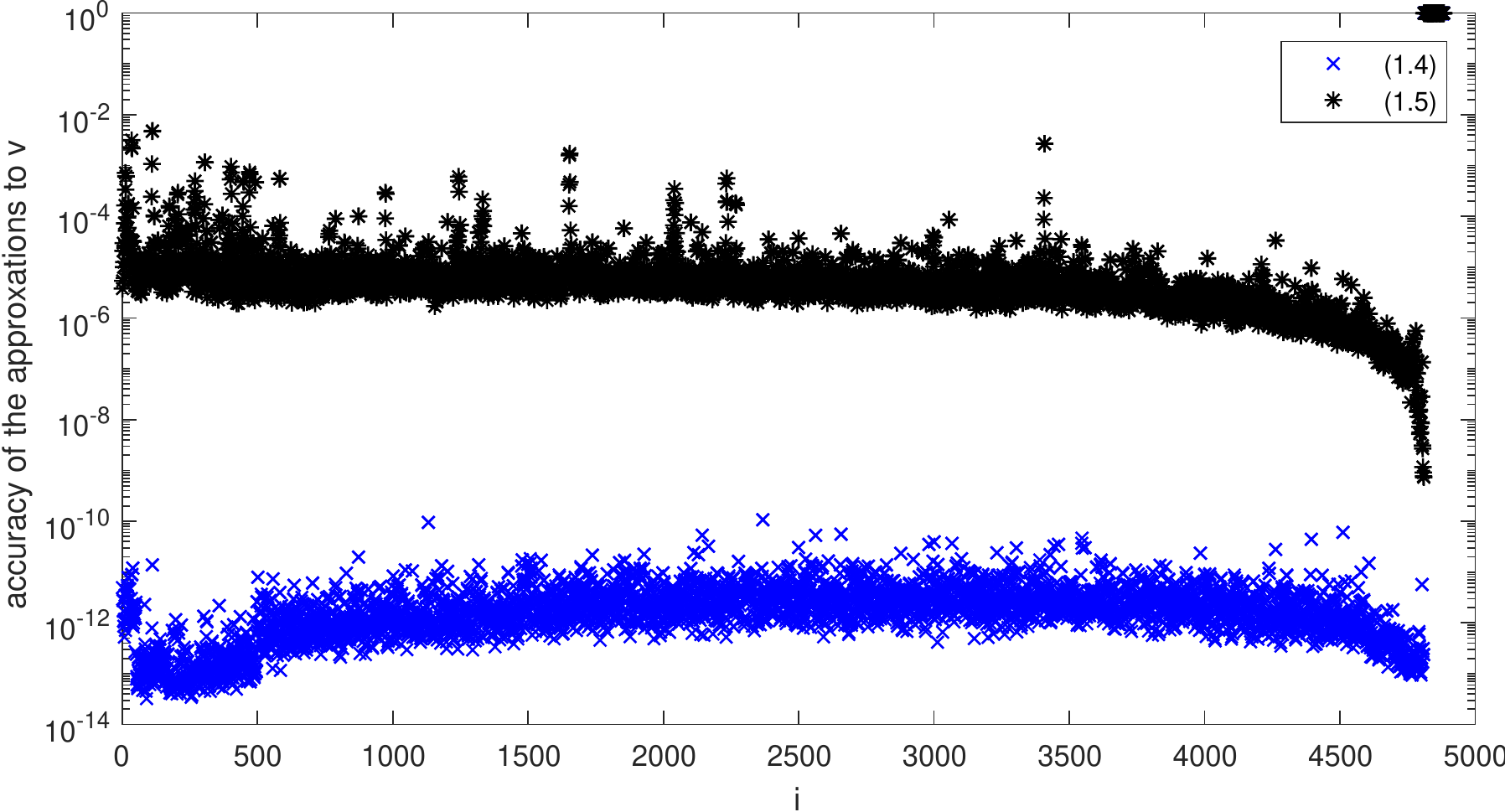}}
\caption{Accuracy of the GSVD components of problem 2f.}\label{fig5}
\end{minipage}
\end{figure}

Table \ref{table3} displays some key data that exhibit the advantages
of \eqref{widehatAB} over \eqref{widetildeBA} when computing the
GSVD of $(A,B)$ more accurately, where $pct$ denotes the
percentages that the computed GSVD components based on
\eqref{widehatAB} are more accurate than those based on \eqref{widetildeBA},
and $acc$ denotes the average orders of magnitude differences
between the accuracy of the computed GSVD components based on
\eqref{widehatAB} and the accuracy of those based on \eqref{widetildeBA},
i.e., $acc$ for the generalized singular values $\sigma$ is defined by
\begin{displaymath}
   acc(\sigma)=\frac{1}{n}\left[\sum_{i=1}^{n}\log\mathcal{X}
   (\widetilde \sigma_i,\sigma_i)-\sum_{i=1}^{n}\log\mathcal{X}
   (\widehat \sigma_i,\sigma_i)\right].
\end{displaymath}
Apparently, the bigger $pct$ and $acc$ are, the more accurate the
GSVD components based on \eqref{widehatAB} are than those based
on \eqref{widetildeBA} on average.
$pct\approx 50\%$ and $acc\approx 0$ indicate that, on average,
there is little
difference and these two formulations based backward stable
eigensolvers compute the GSVD with similar accuracy.

For these six test problems, we have observed very similar
phenomena to the previous experiments.
For problems 2a and 2b where both $A$ and $B$ are equally well
conditioned, \eqref{widehatAB} and \eqref{widetildeBA} are
competitive and there is no obvious winner between them,
though \eqref{widetildeBA} is slightly better than \eqref{widehatAB}.
However, we have seen that, for problems 2c-2f, the matrix
$A$ is increasingly worse conditioned than $B$, the measures
$pct>50\%$ and $acc>0$ increase and become near to one and bigger,
respectively, meaning that more and more GSVD components
are computed more and even much more accurately based on
\eqref{widehatAB}  than on \eqref{widetildeBA}.
Therefore, \eqref{widehatAB} outperforms \eqref{widetildeBA}
for these four problems.

To illustrate the accuracy visually, we depict the results on
problems 2d and 2f in Figures \ref{fig4} and \ref{fig5}, respectively.
For problem 2d, the matrix $B$ is well conditioned and $A$ is ill
conditioned.
We can see from Figure~\ref{fig4} that for the largest  $80\%$
of the GSVD components, \eqref{widehatAB} outperforms
\eqref{widetildeBA} substantially, but for the rest smallest $20\%$
ones, the two formulations are competitive as they yield
comparably accurate approximations.
Particularly,
from Figure \ref{fig4}, we also observe a loss of accuracy of
the approximate generalized singular vectors around the $3500$th GSVD
component.
This occurs because of very small relative gaps
between the corresponding generalized singular values.
For problem 2f where $B$ is well conditioned and $A$ is worse
conditioned,
\eqref{widehatAB} outperforms \eqref{widetildeBA} more
substantially and the accuracy improvements
illustrated by Figure \ref{fig5} are tremendous.
We observe that for almost all (more than 99\%) GSVD
components of $(A,B)$, \eqref{widehatAB} yields much more
accurate approximations than \eqref{widetildeBA} does.
In addition, we see from Figure \ref{fig5} that for the
several smallest GSVD components, using \eqref{widetildeBA}
can compute generalized singular values accurately, but
the corresponding computed generalized singular vectors
have no accuracy at all, while \eqref{widehatAB} works very well.
This is not surprising and is in accordance with our
comments in the near end of Section 2 by noticing that
$\kappa(A)=\mathcal{O}(\epsilon_{\rm mach}^{-1/2})$.

Finally, for all the test problems, we have observed that,
with the suitable formulation chosen and under the chordal
measure, the generalized singular values $\sigma$ are always
computed with full accuracy, which justifies Theorem~\ref{thm:1}
and the analysis followed in Section 2.1.

Summarizing all the experiments, we conclude that (i) both
\eqref{widehatAB} and \eqref{widetildeBA} suit well for
problems where both $A$ and $B$ are well conditioned, (ii)
\eqref{widehatAB} is preferable for problems where $A$
is ill conditioned and $B$ is well conditioned, and (iii)
\eqref{widetildeBA} is preferable for problems where $A$
is well conditioned and $B$ is ill conditioned. Therefore, the
numerical experiments have fully justified our theory.

\section{Conclusions}\label{section:7}

The GSVD of the matrix pair $(A,B)$ can be formulated as two
mathematically equivalent generalized eigenvalue problems of
the matrix pairs defined by
\eqref{widehatAB} and \eqref{widetildeBA},
to which a generalized eigensolver can be applied,
and the GSVD components are recovered from the
computed generalized eigenpairs.
However, in numerical computations,
the two formulations may behave very differently for computing
the GSVD,
and the same generalized eigensolver applied to them may
compute GSVD components with quite different accuracy.
We have made a detailed sensitivity analysis on the generalized
singular values and the generalized singular vectors recovered
from the computed eigenpairs by solving the generalized eigenvalue problems
of the matrix pairs defined by
\eqref{widehatAB} and \eqref{widetildeBA}, respectively.
The results and analysis have shown that
(i) both \eqref{widehatAB} and \eqref{widetildeBA} are
suitable when both $A$ and $B$ are well conditioned;
(ii) \eqref{widehatAB} is preferable when $A$ is ill
conditioned and $B$ is well conditioned;
(iii) \eqref{widetildeBA} suits better when $A$ is well
conditioned and $B$ is ill conditioned.
We have also proposed practical strategies of making a suitable choice
between \eqref{widehatAB} and \eqref{widetildeBA} in
practical computations.

Illuminating numerical experiments have confirmed our theory
and supported our choice strategies on \eqref{widehatAB}
and \eqref{widetildeBA}.


%
%




\end{document}